%-----------------------------------------------------------------------------
% Begin	INPUT FILES & PAGE DEFINITIONS
%-----------------------------------------------------------------------------
\documentclass[12pt,notitlepage]{amsart}
\usepackage{latexsym,amsfonts,amssymb,amsmath,amsthm} %{pstricks,pst-node,epsf}
\usepackage{color}
\pagestyle{headings}

\topmargin -.5in
\textheight 9.25in
\oddsidemargin -0.25in
\evensidemargin -0.25in
\textwidth 6.5in

\sloppy
\flushbottom
\parindent 1em
\leftmargini 2em
\leftmarginv .5em
\leftmarginvi .5em

\marginparwidth 48pt 
\marginparsep 10pt 
\columnsep 10mm

%-----------------------------------------------------------------------------
% Begin PERSONAL DEFINITIONS/SHORTCUTS
%-----------------------------------------------------------------------------

\newcommand{\mz}{\ensuremath{\mathbb Z}}
\newcommand{\mr}{\ensuremath{\mathbb R}}

 % ring of integers!

\newcommand{\shortmod}{\ensuremath{\negthickspace \negthickspace \negthickspace \pmod}}

\newcommand{\half}{\ensuremath{ \frac{1}{2}}}

\newcommand{\intR}{\int_{-\infty}^{\infty}}

\newcommand{\thalf}{\tfrac12}

\newcommand{\leg}[2]{\left(\frac{#1}{#2}\right)}

\newcommand{\kappaone}{.4105}
\newcommand{\kappastar}{.4058}

\theoremstyle{plain}		
	\newtheorem{mytheo}{Theorem}[section]
	
	\newtheorem{myprop}[mytheo]{Proposition}
	\newtheorem{mycoro}[mytheo]{Corollary}
     \newtheorem{mylemma}[mytheo]{Lemma}

\theoremstyle{remark}

\numberwithin{equation}{section}
\begin{document}
\title{More than $41\%$ of the zeros of the zeta function are on the critical line}

\author{H. M. Bui}
\address{Mathematical Institute \\ University of Oxford \\ Oxford
OX1 3LB, UK}
\email{hung.bui@maths.ox.ac.uk}
\thanks{H.M.B. supported by an EPSRC Postdoctoral
Fellowship.}

\author{Brian Conrey}
\address{American Institute of Mathematics\\
          360 Portage Ave.\\
          Palo Alto, CA 94306}
\email{conrey@aimath.org}
\thanks{B.C. partially supported by the American Institute of Mathematics.}

\author{Matthew P. Young} 
\address{Department of Mathematics \\
 	  Texas A\&M University \\
 	  College Station \\
	  TX 77843-3368}
\curraddr{School of Mathematics \\
Institute for Advanced Study \\
Einstein Drive \\ Princeton, NJ 08540 USA}
\thanks{This material is based upon work supported by the National Science Foundation under agreement Nos. DMS-0801264 (B.C.) DMS-0758235 (M.Y.), and DMS-0635607 (M.Y.).  Any opinions, findings and conclusions or recommendations expressed in this material are those of the authors and do not necessarily reflect the views of the National Science Foundation.}
\email{myoung@math.tamu.edu}

\subjclass{Primary 11M26}
\keywords{Riemann zeta function, critical line, zeros, mollifier, moment, mean value}

\begin{abstract}
We prove that at least $41.05 \%$ of the zeros of the Riemann zeta function are on the critical line.
\end{abstract}

\maketitle
\section{Introduction}
The location of the zeros of the Riemann zeta function is one of the most fascinating subjects in number theory.  In this paper we study the percent of zeros lying on the critical line.  With the use of a new two-piece mollifier, we make a modest improvement on this important problem.

To set some terminology, let $N(T)$ denote the number of zeros $\rho = \beta + i\gamma$ with $0 < \gamma < T$, let $N_0(T)$ denote the number of such critical zeros with $\beta = 1/2$, and let $N_0^*(T)$ denote the number of such critical  zeros which are simple.  Define $\kappa$ and $\kappa^*$ by
\begin{equation*}
 \kappa = \liminf_{T \rightarrow \infty} \frac{N_0(T)}{N(T)}, \qquad \kappa^* = \liminf_{T \rightarrow \infty} \frac{N_0^*(T)}{N(T)}.
\end{equation*}

Selberg \cite{Selberg} was the first to prove that a positive percentage of zeros lie on the critical line.  There has since been a series of improvements, of which we briefly mention the work of Levinson \cite{Levinson} obtaining $\kappa \geq .3474$, and the current record of $\kappa \geq .4088$, $\kappa^* \geq .4013$ due to Conrey \cite{Conrey25}.

In this paper we show
\begin{mytheo}
 We have
\begin{equation}
 \kappa \geq \kappaone, \qquad \kappa^* \geq \kappastar.
\end{equation}
\end{mytheo}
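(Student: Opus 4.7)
The plan is to apply Levinson's method with a new two-piece mollifier. Set $L = \log T$, $\sigma_0 = \frac{1}{2} - R/L$ for a parameter $R > 0$ to be chosen, and let $V(s) = Q(-\frac{1}{L}\frac{d}{ds})\zeta(s)$ for a polynomial $Q$ with $Q(0) = 1$. The standard Levinson--Conrey reduction, using the functional equation and Littlewood's lemma applied on a rectangle with left edge $\sigma_0$, yields
\begin{equation*}
\kappa \geq 1 - \frac{1}{2R}\log\bigl(c(Q,M)\bigr) + o(1), \qquad c(Q,M) = \frac{1}{T}\int_1^T |V(\sigma_0+it)\, M(\sigma_0+it)|^2\, dt,
\end{equation*}
for any Dirichlet polynomial $M$ of length $y < T^{1/2-\varepsilon}$. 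To improve on the previous record we take $M = \psi_1 + \psi_2$, where $\psi_1$ is the classical Levinson--Conrey mollifier
\begin{equation*}
\psi_1(s) = \sum_{n \leq y_1} \frac{\mu(n)}{n^s}\, P_1\!\left(\frac{\log(y_1/n)}{\log y_1}\right), \qquad y_1 = T^{\theta_1},
\end{equation*}
and $\psi_2$ is a new secondary piece supported on a shorter range $y_2 = T^{\theta_2}$, with coefficients coming from a different Dirichlet series inversion (for instance coefficients of $\zeta(s)^{-2}$, or a $\mu * \Lambda$ convolution, shaped by a second polynomial $P_2$). The role of $\psi_2$ is to cancel residual large values of $\zeta$ that are not efficiently damped by $\psi_1$ alone.

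I would then carry out the analysis in three stages. First, expand $c(Q,M) = I_{11} + I_{22} + 2\,\text{Re}\,I_{12}$ with $I_{ij} = T^{-1}\int |V|^2\psi_i\overline{\psi_j}\,dt$; the diagonal $I_{11}$ can be evaluated by the methods of Conrey, producing an explicit integral involving $Q$ and $P_1$. Second, evaluate $I_{22}$ and the cross term $I_{12}$: one inserts the approximate functional equation inside $V$, opens the Dirichlet polynomials, and applies the twisted second-moment asymptotics
\begin{equation*}
\int_1^T |\zeta(\sigma_0+it)|^2 \left(\tfrac{m}{n}\right)^{it}\, dt \quad (\text{and its derivative analogues}),
\end{equation*}
reducing each $I_{ij}$ to an Euler-product arithmetic factor times a smooth integral over the polynomial parameters. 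Third, parametrize $Q, P_1, P_2$ by monomial bases subject to the normalizations $Q(0) = 1$ and $P_j(0) = 0$, and numerically minimize $c(Q,M)$ over this finite-dimensional space while sweeping over $R, \theta_1, \theta_2$ to extract the constants $\kappaone$ and $\kappastar$.

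The principal obstacle is the evaluation of the cross term $I_{12}$. Because $\psi_1$ and $\psi_2$ have distinct arithmetic structures, the resulting correlation sum
\begin{equation*}
\sum_{\substack{m \leq y_1 \\ n \leq y_2}} \frac{\mu(m)\, b(n)}{\sqrt{mn}}\, P_1[m]\, P_2[n]\, C(m,n)
\end{equation*}
carries an Euler-product factor $C(m,n)$ that does not split across the two parameters as cleanly as in the single-mollifier case; handling it requires a careful residue calculation from a multi-variable Dirichlet series, with uniform control as $\theta_1, \theta_2 \to 1/2$. A secondary technical point is to verify that the contribution of $|\psi_2|^2$ stays subordinate so that the improvement comes genuinely from the cross term rather than a rebalancing of mass. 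The statement $\kappa^* \geq \kappastar$ for simple zeros is obtained in parallel by the standard device of inserting a $\zeta'$-type factor so that only simple critical zeros contribute with multiplicity, and rerunning the same optimization.
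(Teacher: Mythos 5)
Your general framework -- Levinson's inequality via Littlewood's lemma, a two-piece mollifier $\psi = \psi_1+\psi_2$, decomposition into $I_{11}, I_{22}, I_{12}$, and a final optimization over the polynomials and parameters -- is exactly the strategy of the paper. However, there are three places where the proposal, as written, would not go through. First, the restriction to ``Dirichlet polynomial $M$ of length $y < T^{1/2-\varepsilon}$'' is fatal: Conrey's $\kappa \geq .4088$ already requires $\theta_1 < 4/7$ for $\psi_1$, relying on the Balasubramanian--Conrey--Heath-Brown mean-value theorem, and the paper's improvement keeps $\theta_1 = 4/7-\varepsilon$; with $\theta_1 < 1/2$ one cannot even reach $.40$. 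Second, and more importantly, the structure you ascribe to $\psi_2$ is incomplete. Taking coefficients of $\zeta(s)^{-2}$ does not produce an approximation to $1/\zeta(s)$, and so there is no reason a Dirichlet polynomial built on $\mu_2$ alone would mollify $\zeta$. The crucial ingredient in the paper is that $\psi_2(s) = \chi(s+\tfrac12-\sigma_0)\sum_{hk\le y_2}\mu_2(h)h^{-s}k^{-(1-s)}P_2[hk]$ (suitably normalized), so that $\psi_2$ is a truncation of $\chi(s)\zeta(1-s)/\zeta(s)^2 = 1/\zeta(s)$. Without the $\chi$ factor and the $k^{1-s}$ sum, the whole mechanism collapses.

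Third, because $\psi_2$ carries the $\chi$ factor, the cross term $I_{12}$ is not a diagonal correlation sum of the type you write down. After using the functional equation to absorb $\chi(\tfrac12+it)$, one is left with $\zeta(\tfrac12+\alpha+it)\zeta(\tfrac12-\beta+it)$ (both in the ``$+it$'' direction), and the resulting off-diagonal terms must be controlled. In the paper this is the single hardest step: it requires a Voronoi summation formula for the shifted divisor sum in an arithmetic progression, a delicate bound on the error term $D_E$, and a prime-number-theorem-type cancellation $S_1 \ll L^{-B}$ in the remaining main term. Your description of $I_{12}$ as ``a careful residue calculation from a multi-variable Dirichlet series'' anticipates the shape of the final answer but not the analytic work needed to reach the diagonal. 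Finally, a minor point: the constant in the Levinson inequality should be $\tfrac{1}{R}$ rather than $\tfrac{1}{2R}$ with the normalization $\sigma_0 = \tfrac12 - R/L$; using $\tfrac{1}{2R}$ would give numerics inconsistent with the claimed constants.
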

Our method is to revisit an old approach of Lou \cite{LouDurham} by taking a two-piece mollifier (meaning the sum of two mollifiers, each of a different shape).  The details of Lou's work never appeared in print and there is some doubt as to its correctness.  We have added innovations to Lou's approach by taking a longer mollifier (requiring delicate analysis of off-diagonal terms) and by combining it with ideas of Conrey \cite{Conrey25} \cite{ConreyJNT}.

S. Feng \cite{Feng} has also introduced a new two-piece mollifier
with the purpose of proving a lower bound for the
proportion $\kappa$ of zeros of $\zeta(s)$ on the critical line.
We have checked that Feng's formulas are correct, except
that we cannot verify that his new mollifier is permitted
to have a length $\theta = 4/7-\epsilon$ as is originally
claimed in his preprint. His new mollifier
taken with length $\theta = 1/2-\epsilon$, which is
admissible, would surely lead to an improvement over Conrey's
$\kappa\ge 0.4088$, but it is not clear to us what
his final bound for $\kappa$ will be.

One of the difficulties in studying this and other problems involving mollifiers is that it takes a significant amount of computation to judge how much progress one makes with a new idea.  However, there are some heuristics that can save a lot of time.  In particular, the ratios conjecture \cite{CFZ} can rather quickly allow one to express mollified moments of $L$-functions as certain multiple contour integrals; see \cite{CS} for a variety of examples of such calculations.  Even then, it takes some significant work to simplify these contour integrals into a form usable for calculation.  With some practice these calculations become routine, and we have made an effort to describe the reasoning behind our approach.

\section{Reduction to mean-value theorems}
\subsection{The setup}
\label{section:setup}
The basic technology to prove that many zeros lie on the critical line is an asymptotic for a mollified 
%mean value of the 
second moment of the zeta function.  In this section, we recall how to  reduce the problem to such mean value estimates.  This is mostly a summary of \cite{Conrey25}.

Let $\zeta(s) = \sum_{n=1}^{\infty} n^{-s}$ for $s = \sigma + it$, $\sigma > 1$.  The functional equation states
\begin{equation*}
 \xi(s) = \xi(1-s), \quad \text{where} \quad
%\end{equation}
%where
%\begin{equation}
 \xi(s) =  H(s) \zeta(s),
%\end{equation}
%and
%\begin{equation}
 \quad \text{and} \quad H(s) = \thalf s(s-1) \pi^{-\frac{s}{2}} \Gamma(\tfrac{s}{2}).
\end{equation*}
In its asymmetrical form the functional equation reads
\begin{equation*}
 \zeta(s) = \chi(s) \zeta(1-s), \quad \text{where} \quad \chi(1-s) = 2 (2\pi)^{-s} \Gamma(s) \cos(\tfrac{\pi s}{2}).
\end{equation*}

% Letting $\rho = \beta + i \gamma$ denote the nontrivial zeros of $\zeta(s)$, it is known that
% \begin{equation}
%  N(T) = \# \{ \rho: 0 < \gamma < T \} = \frac{T}{2\pi} \log\leg{T}{2 \pi e} + O(\log{T}).
% \end{equation}
% In this paper we are interested in the number of such zeros that lie on the critical line, i.e.
% \begin{equation}
%  N_0(T) = \# \{ \rho: 0 < \gamma < T, \beta = \thalf \}.
% \end{equation}
To get a lower bound on $N_0(T)$ it suffices to consider a certain mollified second moment of $\zeta$ and its derivatives.  This is well-known; see Section 3 of \cite{Conrey25}, for example, so we shall simply state the conclusion.

Let $Q(x)$ be a %degree $1$ 
real polynomial satisfying $Q(0) = 1$, $Q(x) + Q(1-x) = \text{constant}$, and define
\begin{equation*}
 V(s) = Q\Big(-\frac{1}{L} \frac{d}{ds} \Big) \zeta(s),
\end{equation*}
where for large $T$, 
\begin{equation*}
 L = \log{T}.
\end{equation*}
Suppose $\psi(s)$ is a ``mollifier''.  Littlewood's lemma and the arithmetic-mean, geometric-mean inequality give
\begin{equation}
\label{eq:kappa}
 \kappa \geq 1 - \frac{1}{R} \log \left( \frac{1}{T} \int_{1}^{T} |V \psi(\sigma_0 + it)|^2 dt \right) + o(1),
\end{equation}
where $\sigma_0 = \half -\frac{R}{L}$, and $R$ is a bounded positive real number to be chosen later.  Actually, by choosing $Q(x)$ to be a linear polynomial, one obtains a lower bound on the percent of simple zeros, $\kappa^*$.

We choose a mollifier of the form
\begin{equation*}
\psi(s) =  \psi_1(s) + \psi_2(s),
\end{equation*}
where $\psi_1$ and $\psi_2$ are mollifiers of quite different shape.  Here $\psi_1$ is mollifier of a familiar type from \cite{Conrey25}.  Let $P_1(x) = \sum_j a_j x^j$ be a certain polynomial satisfying $P_1(0) = 0$, $P_1(1) = 1$, let $y_1 = T^{\theta_1}$ where $0< \theta_1 < \frac47$, and use the notation
\begin{equation}
\label{eq:P1def}
 P_1[n] = P_1 \Big(\frac{\log y_1/n}{\log{y_1}} \Big),
\end{equation}
for $1 \leq n \leq y_1$.  By convention, we set $P_1[x] = 0$ for $x \geq y_1$.
With this notation, 
\begin{equation*}
 \psi_1(s) = \sum_{n \leq y_1} \frac{\mu(n) P_1[n] %\leg{\log{y_1/n}}{\log{y_1}} 
n^{\sigma_0 - \half}}{n^s}.
\end{equation*}

For the second mollifier, we take
\begin{equation}
\label{eq:psi2def}
 \psi_2(s) = \chi(s+\thalf-\sigma_0) \sum_{hk \leq y_2} \frac{\mu_2(h) h^{\sigma_0 - \half} k^{\half-\sigma_0} }{h^s k^{1-s}} P_2[hk]%\leg{\log{y_2/hk}}{\log{y_2}},
\end{equation}
where $\mu_2(h)$ are the coefficients of $1/\zeta^2(s)$ and $P_2(x) = \sum_j b_j x^j$
is a polynomial satisfying $P_2(0) = P_2'(0) = P_2''(0) = 0$.  Here $y_2 = T^{\theta_2}$ where $\theta_2 < \theta_1$ (we shall see later what conditions are required on $\theta_2$). Note that (formally)
\begin{equation*}
\chi(s) \sum_{h, k = 1}^{\infty} \frac{\mu_2(h)}{h^s k^{1-s}} = \frac{\chi(s)\zeta(1-s)}{\zeta^2(s)} = \frac{1}{\zeta(s)},
\end{equation*}
which explains why $\psi_2(s)$ may be a useful choice of a mollifier.  Here $\psi_2(s)$ is of a somewhat similar shape to a mollifier chosen by Shi-tuo Lou in \cite{LouDurham}. %(also note \cite{LYChinese} \cite{LYEnglish}).  
Lou considered a mollifier of the form $L^{-2} \chi(s)$ times a Dirichlet series that is roughly of the shape above, with the choice of smoothing polynomial $P_2(x) = x$.  Unfortunately, the details of the calculations are omitted, and there is some doubt on whether the result is correct since our analysis suggests that $P_2$ must vanish to third order; the presence of $L^{-2}$ is also suspect.

The method sketched in Section 3 of \cite{Conrey25} carries through with our choice of $\psi= \psi_1 + \psi_2$, but there is one extra ingredient worthy of mention.
To apply Littlewood's lemma, one needs to estimate the integral on the right side of a rectangle, say at $\sigma = 2$.  This can be done using the trivial bound for $\sigma \geq \half$, say
\begin{equation}
 |\psi_2(s)| \ll \sqrt{t} \leg{y_2}{t}^{\sigma} L^2.
\end{equation}
As long as $\theta_2 < 1$ this is small for $\sigma$ sufficiently large.  The point is that $\chi(s)$ is small for large $\sigma$; 
a mollifier of the form $\chi(1-s)$ times
    a Dirichlet polynomial runs into problems
%some variations of choices of mollifiers in the vein of \eqref{eq:psi2def} but with $\chi(1-s)$ run into problems 
for $\sigma$ large.

\begin{mytheo}
\label{thm:sharp}
Suppose $\theta_1 = 4/7-\varepsilon$ and $\theta_2 = 1/2 - \varepsilon$ for $\varepsilon > 0$ small.  Then
\begin{equation}
 \frac{1}{T} \int_1^{T} |V \psi(\sigma_0 + it)|^2 dt = c(P,Q,R,\theta_1, \theta_2) +o(1),
\end{equation}
where $c(P,Q,R,\theta_1, \theta_2) = c_1 + 2c_{12} + c_2$ and the $c_{i}$ are given below by \eqref{eq:c1}, \eqref{eq:c12}, and \eqref{eq:c2}.
\end{mytheo}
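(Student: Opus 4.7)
My plan is to expand
\[
|V\psi|^2 = |V\psi_1|^2 + 2\Re\bigl(V\psi_1 \cdot \overline{V\psi_2}\bigr) + |V\psi_2|^2
\]
and treat each of the three mean values separately, producing $c_1$, $c_{12}$, and $c_2$. A uniform device is to introduce shift parameters $\alpha,\beta$, analyze the shifted moment
\[
I_{ij}(\alpha,\beta) = \frac{1}{T}\int_1^T \zeta(\tfrac12+\alpha+it)\,\zeta(\tfrac12+\beta-it)\,\psi_i(\sigma_0+it)\,\overline{\psi_j(\sigma_0+it)}\,dt,
\]
and then recover the $V$-factors by applying $Q(-L^{-1}\partial_\alpha)Q(-L^{-1}\partial_\beta)$ and setting $\alpha=\beta=-R/L$. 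The ratios-conjecture recipe of \cite{CFZ} supplies an immediate prediction for each $I_{ij}$ as a multiple contour integral, giving both a target and a guide for the rigorous analysis.

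\emph{The diagonal pieces $c_1$ and $c_2$.} For $I_{11}$ I would follow \cite{Conrey25} essentially verbatim: expand $\psi_1\overline{\psi_1}$ as a Dirichlet polynomial of length $y_1^2=T^{2\theta_1}$, apply the approximate functional equation to each zeta factor, and carry out the $t$-integration to obtain a diagonal sum (giving the main term) plus an off-diagonal sum of Kloosterman type. The latter is controlled for $\theta_1<4/7$ by the Deshouillers--Iwaniec estimates as in \cite{Conrey25}, so this piece is largely bookkeeping to track the shift parameters. For $I_{22}$, the product $\chi(s+\tfrac12-\sigma_0)\overline{\chi(s+\tfrac12-\sigma_0)}$ is $1+O(t^{-1})$ on $\sigma=\sigma_0$, so the mean value reduces to a bilinear form in Dirichlet polynomials of effective length $y_2^2=T^{2\theta_2}<T$; with $\theta_2<1/2$ only the diagonal survives, and standard Perron/contour-shift manipulations yield the contour-integral expression for $c_2$. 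The requirement $P_2(0)=P_2'(0)=P_2''(0)=0$ enters precisely to ensure absolute convergence of that integral, explaining why Lou's choice $P_2(x)=x$ in \cite{LouDurham} is too crude.

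\emph{The cross term $c_{12}$.} This is the delicate piece. The factor $\chi(s+\tfrac12-\sigma_0)$ inside $\overline{\psi_2}$ effectively applies a functional equation to one of the zeta factors, turning $I_{12}$ into an integral whose arithmetic content is an off-diagonal correlation of the Dirichlet coefficients of $\psi_1$ and $\psi_2$. Extracting the diagonal produces, via the usual contour manipulations, a formula matching the CFZ prediction. The real work is bounding the off-diagonal, whose effective length $T^{\theta_1+\theta_2}$ exceeds $T$ in our range; this is done by exploiting the arithmetic cancellation carried by $\mu_2$ together with the third-order vanishing of $P_2$, and by once again invoking Kloosterman-sum estimates of Deshouillers--Iwaniec type.

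\emph{Main obstacle.} The principal difficulty is the rigorous off-diagonal analysis in $c_{12}$, where two mollifiers of fundamentally different shape must be combined; this is precisely where we believe Lou's original argument breaks down, and it is what simultaneously fixes the correct form of $P_2$ and justifies pushing $\theta_2$ up to $1/2-\varepsilon$. A secondary burden is the algebraic simplification of the multiple contour integrals defining the $c_i$ into explicit polynomial expressions in $P_1,P_2,Q,R,\theta_1,\theta_2$ amenable to the numerical optimization that eventually yields the bounds $\kappaone$ and $\kappastar$.
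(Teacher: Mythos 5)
Your decomposition into three mean values, the shift-parameter device with the differential operator $Q(-L^{-1}\partial_\alpha)Q(-L^{-1}\partial_\beta)$, the treatment of $I_{11}$ by quoting Conrey's theorem, and the observation that $I_{22}$ reduces to the diagonal of a twisted second moment of length $y_2^2 = T^{2\theta_2} < T$ all match the paper's strategy (you omit only the technical step of first smoothing the integral and deducing the sharp cutoff, which the paper does via Theorem~\ref{thm:smooth}).

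However, your account of the cross-term off-diagonal is a genuine gap, and it is precisely the novel part of this paper. You propose to control the off-diagonal terms in $I_{12}$ by ``Kloosterman-sum estimates of Deshouillers--Iwaniec type'' and by cancellation in $\mu_2$. The paper does neither. After applying the functional equation and a one-long-sum approximate functional equation, the off-diagonal is parametrized by $f = hl - nk \neq 0$ and rewritten as a congruence $hl \equiv f \pmod n$. One then applies the classical Voronoi summation formula for $\sum_{l \leq t,\; l \equiv c \pmod{n_1}} d(l)$ (with the elementary error bound $O((n_1 t)^{\varepsilon}(\sqrt{n_1}+t^{1/3}))$, not Deshouillers--Iwaniec). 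The Voronoi error term is then small for the stated $\theta_1,\theta_2$ by direct estimation, but the Voronoi \emph{main} term is not small pointwise: it only becomes negligible after summing over $n$ and exploiting cancellation in $\mu(n)$ from the first mollifier $\psi_1$ via an argument ``on the level of the prime number theorem'' (using the zero-free region; see Lemma~\ref{lemma:S1}). So the key arithmetic cancellation comes from $\mu(n)$ in $\psi_1$, not from $\mu_2(h)$ in $\psi_2$, and the analytic tool is Voronoi plus a PNT-type contour-shift argument, not Kloosterman sums. It is not clear that a Kloosterman-sum approach would apply here, since the coefficients $\mu(n)$ and $\mu_2(h)$ are not of divisor type and do not obviously feed into the Deshouillers--Iwaniec machinery; in any case, your proposal would need to be rebuilt around the Voronoi/PNT mechanism to match the range $\theta_1 = 4/7-\varepsilon$, $\theta_2 = 1/2-\varepsilon$.

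A minor point: you attribute the condition $P_2(0)=P_2'(0)=P_2''(0)=0$ solely to convergence of the $c_2$ contour integral. In fact $P_2'(0)=0$ is also needed so that $P_2[\cdot]$ is $C^1$ at the endpoint (used when approximating $P_2[hk]$ by $P_2[h^2 l/n]$ in the off-diagonal of $I_{12}$), and the full third-order vanishing is what makes the degrees $i,j \geq 3$ in the Mellin expansion of $P_2$, which in turn controls the error terms in Lemmas~\ref{lemma:K1K2} and \ref{lemma:L1computed}.
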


\subsection{Numerical evaluations}
We use Mathematica to numerically evaluate $c(P,Q,R,4/7, 1/2)$ with the following particular choices of parameters.
With $R=1.28$, %$P_1(x) = $, $P_2(x) = $, 
%\begin{equation}
\begin{gather*}
Q(x) = 0.492 + 0.604(1 - 2x) - 0.08(1 - 2x)^3 - 0.06(1 - 2x)^5 + 
 0.046(1 - 2x)^7, \\ 
%\end{equation}
%and
 P_1(x) = .842706x + .00845721x^2 + .093117x^3 + .118788x^4 -.0630687x^5, \\
%\end{equation}
%\begin{equation}
 P_2(x) = .0245412x^3 - .00635566x^4+.00603128x^5,
\end{gather*}
we have $\kappa \geq \kappaone$.  To get $\kappa^* \geq \kappastar$, we take $R= 1.12$,  $Q(x) = 1- 1.03x$, 
\begin{gather*}
%\begin{equation}
% \end{equation}
% and
% \begin{equation}
 P_1(x) = .829473x + .0104358x^2 + .082009x^3 + .177482x^4 -.0993997x^5, \\
% \end{equation}
% \begin{equation}
P_2(x) = .0323061x^3 -.00553783x^4+.0769594x^5.
\end{gather*}
%\end{equation}

\subsection{A smoothing argument}
\label{section:smoothing}
It simplifies some calculations to smooth out the integral in \eqref{eq:kappa}.  Suppose $w(t)$ is a smooth function satisfying the following properties:
\begin{align}
\label{eq:w1}
&0 \leq w(t) \leq 1 \text{ for all } t \in \mr, \\
&w \text{ has compact support in } [T/4, 2T], \\
&w^{(j)}(t) \ll_j \Delta^{-j}, \text{ for each } j=0,1,2, \dots, \quad \text{where } \Delta = T/L.
\label{eq:w3}
\end{align}
\begin{mytheo}
\label{thm:smooth}
 For any $w$ satisfying \eqref{eq:w1}-\eqref{eq:w3}, and $\sigma = 1/2 - R/L$, 
\begin{equation}
 \intR w(t) |V \psi(\sigma + it)|^2 dt = c(P,Q,R,\theta_1, \theta_2) \widehat{w}(0) + O(T L^{-1+\varepsilon}),
\end{equation}
uniformly for $R \ll 1$, where $c(P,Q,R, \theta_1, \theta_2) = c_1 + 2c_{12} + c_2$, and where these constants are given below by \eqref{eq:c1}, \eqref{eq:c12}, and \eqref{eq:c2}.
\end{mytheo}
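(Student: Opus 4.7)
My plan is to expand
\begin{equation*}
|V\psi|^2 = |V\psi_1|^2 + 2\,\mathrm{Re}\bigl(V\psi_1\,\overline{V\psi_2}\bigr) + |V\psi_2|^2
\end{equation*}
and to establish separately, for each of the three integrals
\begin{equation*}
I_{ij} = \int_{-\infty}^{\infty} w(t)\,V(\sigma_0+it)\psi_i(\sigma_0+it)\,\overline{V(\sigma_0+it)\psi_j(\sigma_0+it)}\,dt,
\end{equation*}
an asymptotic of the shape $c_{ij}\widehat w(0)+O(TL^{-1+\varepsilon})$. In every case the scheme is the same: expand the mollifier(s) as finite Dirichlet polynomials, multiply out against $V\bar V$, invoke the approximate functional equation for the resulting product of two shifted zeta values to replace it with a shifted divisor sum, interchange the $t$-integration with the arithmetic summation, and split the outcome into a diagonal piece (which produces $c_{ij}\widehat w(0)$) and an off-diagonal piece (controlled by Kloosterman-sum estimates). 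The smoothness of $w$, via repeated integration by parts, is what forces the off-diagonal oscillatory integrals to be small.

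For $I_{11}$ the work is essentially that of \cite{Conrey25}, extended only in that $\theta_1$ is pushed to $4/7-\varepsilon$. The diagonal contribution is most cleanly expressed as a multiple contour integral in the shift parameters whose residues evaluate to $c_1$, in the spirit of the ratios-conjecture calculations of \cite{CFZ} and \cite{CS}. The off-diagonal contribution is the classical obstacle: it is handled by Deshouillers-Iwaniec type estimates for averages of Kloosterman sums and is sharp precisely at length $4/7$, which is why $\psi_1$ cannot be taken any longer.

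The piece $I_{22}$ is the one that is genuinely novel. Using $|\chi(\tfrac12+it)|=1$ on the integration line,
\begin{equation*}
\psi_2(\sigma_0+it) = \chi(\tfrac12+it)\sum_{hk\le y_2}\frac{\mu_2(h)P_2[hk]}{h^{1/2+it}k^{1/2-it}},
\end{equation*}
so the character factor cancels in $|\psi_2|^2$ and we are left with a four-variable $(h_1,k_1,h_2,k_2)$ sum constrained by $h_ik_i\le y_2$. Multiplying by $V\bar V$ and applying the approximate functional equation yields a six-variable sum whose diagonal, governed by $h_1k_2 = h_2k_1$, gives the $c_2$ main term; I expect this to evaluate to $c_2\widehat w(0)$ via a residue computation that works only because $P_2$ vanishes to order three at $0$, the triple zero matching the order of certain poles in the associated ratios-style integrand. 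The off-diagonal part has combined length $y_2^2 = T^{1-2\varepsilon}$, comfortably below $T$, and is handled by the trivial Weil bound combined with integration by parts in $t$. This is precisely the calculation omitted in \cite{LouDurham}, and its correct execution is the main novelty.

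The cross term $I_{12}$ inherits $\overline{\chi(\tfrac12+it)} = \chi(\tfrac12-it)$ from $\overline{\psi_2}$. I would absorb this factor by applying the functional equation $\zeta(s) = \chi(s)\zeta(1-s)$ to one of the two zeta copies inside $V\bar V$, which turns $V\bar V\cdot\chi(\tfrac12-it)$ into a product essentially of $\zeta(\sigma_0+it)\zeta(1-\sigma_0+it)$ up to slowly-varying factors. The approximate functional equation then reduces $I_{12}$ to a mixed shifted convolution of $\mu(n)P_1[n]$ against $\mu_2(h)P_2[hk]$; the diagonal extracts $c_{12}\widehat w(0)$, and the off-diagonal again requires the Deshouillers-Iwaniec input, since the relevant effective length is governed by $y_1 = T^{4/7-\varepsilon}$.

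The main obstacle is the off-diagonal analysis in $I_{11}$ and $I_{12}$, at the edge of current Kloosterman-sum technology; the secondary but novel obstacle is the bookkeeping and contour-integral manipulation in $I_{22}$, particularly the step at which the triple zero of $P_2$ becomes indispensable. Combining the three asymptotics gives Theorem \ref{thm:smooth}.
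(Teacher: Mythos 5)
Your overall decomposition into $I_{11}$, $I_{12}$, $I_{22}$, the treatment of $I_{11}$ by citing Conrey's Theorem 2 with the Deshouillers--Iwaniec machinery at length $4/7$, and the observation that $I_{22}$ requires only diagonal analysis because $\theta_2<1/2$ forces $h_1k_1h_2k_2 \le y_2^2 \ll T^{1-\varepsilon}$ (so Proposition~\ref{prop:twisted} applies with the off-diagonal killed by integration by parts) all match the paper. Your remark about the triple zero of $P_2$ is also in the right spirit: it is needed so that the poles of $1/s^{i+1}$ are compensated by the double zeros coming from the $1/\zeta^2$ factors in $L_1$, $L_2$.

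The genuine gap is your treatment of the off-diagonal in $I_{12}$. You correctly note that $\overline{\psi_2}$ contributes $\chi(\tfrac12-it)$ and that one should absorb it by applying the functional equation to one copy of $\zeta$; but you then stop short of the key structural consequence. Once you flip $\zeta(\tfrac12+\beta-it)$ into $\chi(\tfrac12+\beta-it)\zeta(\tfrac12-\beta+it)$, the surviving product $\zeta(\tfrac12+\alpha+it)\zeta(\tfrac12-\beta+it)$ has \emph{both} zetas with the same sign of $it$, so it admits a ``one long sum'' approximate functional equation $\sum_l \sigma_{\alpha,-\beta}(l)\,l^{-1/2-it}e^{-l/T^3}$ (Lemma~\ref{lemma:longAFE}), not a balanced two-sided AFE. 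The off-diagonal then has the form $\sum d(l)$ restricted to a residue class modulo $n$, and the paper disposes of it by Voronoi summation for the divisor function in arithmetic progressions (Lemma~\ref{lemma:Voronoi}) under the conditions \eqref{eq:thetabounds} --- which allow $\theta_1$ up to $4/7$ and $\theta_2$ up to $4/7$. No Deshouillers--Iwaniec or Kuznetsov input is needed for $I_{12}$; claiming it is would needlessly import the hardest technology in the subject where a far more elementary argument suffices, and if you actually tried to run a balanced-AFE/Kloosterman analysis here you would also face the spurious complication of the extra $\chi(\tfrac12-it)$ factor, which the one-sided AFE eliminates cleanly.
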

We briefly explain how to deduce Theorem \ref{thm:sharp} from Theorem \ref{thm:smooth}.
By choosing $w$ to satisfy \eqref{eq:w1}-\eqref{eq:w3} and in addition to be an upper bound for the characteristic function of the interval $[T/2, T]$, and with support in $[T/2 - \Delta, T + \Delta]$, we get
\begin{equation}
 \int_{T/2}^{T} |V \psi(\sigma_0 + it)|^2 dt \leq c(P, Q, R, \theta_1, \theta_2) \widehat{w}(0) + O(T L^{-1+\varepsilon}).
\end{equation}
Note $\widehat{w}(0) = T/2 + O(T/L)$.
We similarly get a lower bound.  Summing over dyadic segments gives the full integral.

\section{The mean-value results}
Writing $\psi = \psi_1 + \psi_2$ and opening the square, we get
\begin{equation*}
 \int |V \psi|^2 = \int |V \psi_1|^2 + \int |V|^2 \psi_1 \overline{\psi_2} + \int |V|^2 \overline{ \psi_1} \psi_2 + \int |V \psi_2|^2 =: I_1 + I_{12} + \overline{I_{12}} + I_2.
\end{equation*}
We shall compute the integrals in turn.  It turns out that $I_{12}$ is asymptotically real.

\subsection{The main terms}
Recall the conditions on $Q, P_1$, and $P_2$ stated in Section \ref{section:setup}.

First we quote Theorem 2 of \cite{Conrey25}.
\begin{mytheo}[Conrey]  Suppose $\theta_1 < 4/7$.  Then
\begin{equation}
\int_1^{T}  |V\psi_1(\sigma_0 + it)|^2 dt \sim c_{1}(P_1, Q, R, \theta_1) T,
\end{equation}
as $T \rightarrow \infty$, where
\begin{equation}
\label{eq:c1}
 c_{1}(P_1, Q, R, \theta_1) = 1 + \frac{1}{\theta_1} \int_0^{1} \int_0^{1} e^{2Rv} (Q(v) P_1'(u) + \theta_1 Q'(v) P_1(u) + \theta_1 R Q(v) P_1(u))^2 du dv.
\end{equation}
\end{mytheo}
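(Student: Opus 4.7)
The strategy is to expand $|V\psi_1(\sigma_0+it)|^2 = V\psi_1 \cdot \overline{V\psi_1}$ and insert an approximate functional equation for $\zeta(s)$ on the critical line. Since $V(s) = Q(-L^{-1}d/ds)\zeta(s)$, differentiating the AFE in $s$ represents $V(\sigma_0+it)$ as a pair of Dirichlet polynomials of length $\approx \sqrt{t/(2\pi)}$, the second twisted by $\chi(s+\tfrac{1}{2}-\sigma_0)$, with coefficients involving polynomials in $\log$'s coming from $Q$. Multiplying by $\psi_1 \overline{\psi_1} = \sum_{h,n \leq y_1} \mu(h)\mu(n) P_1[h] P_1[n] h^{\sigma_0-1/2}n^{\sigma_0-1/2} (hn)^{-\sigma_0} (n/h)^{it}$ and integrating in $t$ against a smooth weight $w(t)$ as in Theorem \ref{thm:smooth} yields a four-fold sum over $(h,n,m,k)$ that decomposes into a diagonal contribution (where $hm=kn$) plus off-diagonal terms (where $hm-kn = \ell \ne 0$, produced by the cross-pairings after the functional equation).

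For the diagonal, I would represent $P_1[n]$ through its Mellin transform (so that $P_1[n] = \frac{1}{2\pi i}\int (y_1/n)^z \widetilde{P_1}(z)\, dz$) and similarly for the polynomial weights coming from $Q$, turning the diagonal sum into a multiple contour integral whose integrand is essentially a product of $\zeta$-values. Shifting contours and picking up the residues at the pole of $\zeta$ at $s=1$ (and its derivatives, produced by the operator $Q$) gives the closed form in \eqref{eq:c1}. The variable $u$ corresponds to $\log(y_1/n)/\log y_1$ and $v$ to the parameter along the critical segment; the factor $e^{2Rv}$ reflects the shift $\sigma_0 = 1/2 - R/L$; and the integrand $Q(v)P_1'(u) + \theta_1 Q'(v)P_1(u) + \theta_1 R\, Q(v) P_1(u)$ emerges from differentiating the Mellin parameters, using the conditions $Q(0)=1$, $Q(x)+Q(1-x)=\text{const}$, $P_1(0)=0$, and $P_1(1)=1$ to collapse boundary terms.

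The main obstacle is the off-diagonal piece, and this is what forces the restriction $\theta_1 < 4/7$. After applying the functional equation to the mixed pairings, the off-diagonals can be recast as shifted convolution sums of divisor-type coefficients, which in turn unfold (via Kloosterman's expansion for the Ramanujan sum / Voronoi summation) into weighted sums of Kloosterman sums $\sum_{c}\frac{S(m,n;c)}{c}\Phi(\ldots)$. Here I would invoke the Deshouillers--Iwaniec spectral large sieve inequalities for averaged sums of Kloosterman sums; these deliver square-root cancellation in the relevant ranges and yield an admissible bound $O(TL^{-1+\varepsilon})$ precisely when the mollifier length satisfies $y_1 = T^{\theta_1}$ with $\theta_1 < 4/7$. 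Classical mean-value estimates (Montgomery--Vaughan) would only reach $\theta_1 < 1/2$; the advance to $4/7$ is exactly the gain provided by the automorphic input, and is the technically most delicate step. Combining the diagonal main term with the negligible off-diagonal contribution produces the stated asymptotic with constant $c_1(P_1,Q,R,\theta_1)$.
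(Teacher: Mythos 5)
This statement is not proved in the paper at all; it is quoted verbatim as Theorem~2 of Conrey's 1989 paper \cite{Conrey25}, so there is no in-paper argument against which to compare your proposal. What I can say is that your sketch is a broadly faithful account of how Conrey's proof actually goes. The key structural points you identify are right: one expands $|V\psi_1|^2$, inserts an approximate functional equation with two sums of length roughly $\sqrt{t/2\pi}$ (the second carrying a $\chi$-factor), Mellin-transforms the mollifier coefficients $P_1[\cdot]$ as in \eqref{eq:P1Mellin}, and evaluates the diagonal by contour shifts through poles of $\zeta$ at $1$ and its derivatives induced by $Q$. The off-diagonal is indeed a shifted-convolution/divisor problem whose treatment via Kloosterman sums and the Deshouillers--Iwaniec spectral bounds is exactly what lets Conrey extend the mollifier length from the classical $\theta_1<1/2$ to $\theta_1<4/7$; you correctly identify that as the technical heart of the result.

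Two small caveats. First, your description of how the explicit integrand in \eqref{eq:c1} emerges (``differentiating the Mellin parameters'' and ``collapsing boundary terms'' using $Q(0)=1$, $Q(x)+Q(1-x)=\text{const}$, $P_1(0)=0$, $P_1(1)=1$) is vague to the point of being a placeholder: the conditions on $Q$ serve primarily in the Littlewood-lemma reduction \eqref{eq:kappa} and in identifying the leading $1$ in $c_1$ (the contribution of the simple pole of $\zeta(1+s+u)$, reflecting $V\psi_1\approx 1$), not really to "collapse boundary terms" in the residue computation. Second, you slightly understate the delicacy of applying the AFE under $V=Q(-L^{-1}d/ds)$: differentiating a truncated sum in $s$ produces boundary contributions from the cutoff that Conrey handles by a smoothing/contour device rather than a literal term-by-term differentiation. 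Neither point is a fatal gap for an outline, but each conceals real work in the original reference.
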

This is the unsmoothed version, but the smoothed version follows easily from this.

We handle the other terms as follows
\begin{mytheo}
\label{thm:mainterm12}
 Suppose $\theta_2 < \theta_1  < 4/7$.   %$\theta_2 < 1/2$, 
Then
\begin{equation}
 I_{12} = \intR w(t) |V|^2 \overline{\psi_1} \psi_2   (\sigma_0 + it) dt = c_{12} \widehat{w}(0) + O(T/L),
\end{equation}
where
\begin{multline}
\label{eq:c12}
c_{12}= 4 \frac{ \theta_2^2}{\theta_1^2} e^{R}
 \frac{d^2}{dx dy}\Big[ \mathop{\int \int}_{\substack{0 \leq a+b \leq 1 \\ 0 \leq a,b}} 
  \int_0^{1} u^2 (1-u) e^{R[\theta_1(y-x) + u\theta_2(a-b)]}  Q(-x \theta_1 + au \theta_2)
\\
\left.   Q(1+y \theta_1 -bu \theta_2) P_1\left(x+y+ 1-(1-u)\frac{ \theta_2}{\theta_1}\right) P_2''((1-a-b)u) du \, da \, db \right]_{x=y=0} .
\end{multline}
\end{mytheo}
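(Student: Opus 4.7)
The plan is to reduce $I_{12}$ to a mean value of Dirichlet polynomials against $\zeta$-factors with a $\chi$-twist, isolate a near-diagonal condition by stationary phase in $t$, evaluate the diagonal main term by contour integration along the lines of the ratios-conjecture recipe, and dispose of the off-diagonal pieces by shifted-convolution estimates.

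First, using the representation $V(s)=Q(-L^{-1}d/ds)\zeta(s)$, I would apply the functional equation $\zeta(\sigma_0-it)=\chi(\sigma_0-it)\zeta(1-\sigma_0+it)$ to the conjugate factor $\overline{V(\sigma_0+it)}$. The resulting $\chi(\sigma_0-it)$ combines with the $\chi(\tfrac12+it)$ already present in $\psi_2$, producing $\chi(\tfrac12+it)\chi(\sigma_0-it)\sim (t/2\pi)^{1/2-\sigma_0}=(t/2\pi)^{R/L}$; this is $e^{R}(1+o(1))$ uniformly on the support of $w$, which accounts for the $e^{R}$ prefactor in \eqref{eq:c12}. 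Commuting the differential operator $Q(-L^{-1}d/ds)$ past the functional equation produces factors of $-L^{-1}\chi'/\chi\approx L^{-1}\log(t/2\pi)$, which ultimately become the two evaluations $Q(-x\theta_1+au\theta_2)$ and $Q(1+y\theta_1-bu\theta_2)$ after integrating against $w$.

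Next, I would expand each $\zeta$-factor by a smooth approximate functional equation of length $\asymp\sqrt{t/(2\pi)}$, and write $\overline{\psi_1}$ and the $hk$-sum inside $\psi_2$ as Dirichlet polynomials over $n\leq y_1$ and $hk\leq y_2$. After rearrangement, $I_{12}$ becomes a fivefold sum in $m_1,m_2,n,h,k$ weighted by
\begin{equation*}
\int w(t)\,(t/2\pi)^{R/L}\left(\frac{nhm_2}{m_1 k}\right)^{-it}dt,
\end{equation*}
together with smooth cutoffs in $m_1,m_2$. The $t$-integral concentrates on the balanced condition $nhm_2=m_1 k$; terms away from this diagonal contribute a shifted-convolution remainder to be estimated separately.

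On the diagonal $nhm_2=m_1 k$, the sums over the free variable assemble into Dirichlet series that, after Mellin-inverting the $P_1, P_2$ and $Q$ polynomials, evaluate to products of $\zeta$-values at small shifts. Shifting contours past the pole of $\zeta$ at $1$ produces the leading residue; the substitutions $n=y_1^{1-x-y-\cdots}$, $h=y_2^{au}$, $k=y_2^{bu}$, with $0\leq a,b,a+b\leq 1$ and $0\leq u\leq 1$ parameterizing $\{hk\leq y_2\}$, then convert the multiple contour integral into the real integral displayed in \eqref{eq:c12}. The Jacobian of this parameterization is $u^2(1-u)$; the factor $P_2''$ emerges from Mellin contour shifts, with the vanishing conditions $P_2(0)=P_2'(0)=P_2''(0)=0$ ensuring that the residues at the shifts are controlled and the final integral is absolutely convergent; the $x,y$-derivatives at the origin encode the Mellin shifts coming from the $Q$ operator. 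The main obstacle will be showing that the off-diagonal contribution $nhm_2\neq m_1 k$ is $o(T/L)$ when $\theta_2<\theta_1<4/7$: this requires Deshouillers--Iwaniec style bounds on bilinear forms of Kloosterman fractions, paralleling the delicate off-diagonal analysis at the heart of Conrey's $4/7$ result in \cite{Conrey25}, with the hypothesis $\theta_2<\theta_1$ ensuring that the $\psi_2$-length is short enough to be absorbed by the $\psi_1$-side.
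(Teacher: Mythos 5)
Your outline captures some of the right moves, notably applying the functional equation so that the $\chi(\tfrac12+it)$ inside $\psi_2$ cancels the $\chi$ coming from $\zeta(\tfrac12+\beta-it)$, which converts the problem from a $|\zeta|^2$-type mean into a $\zeta\cdot\zeta$-type mean. This is also where the $e^R$ prefactor comes from, and your heuristic for it is correct. However, you diverge from the paper in two substantive ways, and one of them is a genuine misidentification of the key tool.

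First, the paper does not ``commute the differential operator $Q(-L^{-1}d/ds)$ past the functional equation.'' Instead it introduces shift parameters $\alpha,\beta$, proves the asymptotic for $I_{12}(\alpha,\beta)=\int w(t)\zeta(\tfrac12+\alpha+it)\zeta(\tfrac12+\beta-it)\overline{\psi_1}\psi_2(\sigma_0+it)\,dt$ uniformly in $\alpha,\beta\ll L^{-1}$, and only at the end recovers the $Q$-factors by applying $Q(-L^{-1}\partial_\alpha)Q(-L^{-1}\partial_\beta)$ and setting $\alpha=\beta=-R/L$ (justified via Cauchy's integral formula). Your direct route would force you to carry the lower-order $\chi'/\chi$ corrections through all the later contour manipulations, which is much messier and error-prone. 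Similarly, after the functional equation makes both $\zeta$-factors carry the same sign of $it$, the paper expands the product as a single Dirichlet series $\sum_l \sigma_{\alpha,-\beta}(l)l^{-1/2-it}e^{-l/T^3}$ (Lemma~\ref{lemma:longAFE}); the decay of $\widehat{w_0}$ then truncates $l$ automatically and the diagonal is the clean two-term equality $hl=nk$. Your plan to apply a balanced AFE of length $\sqrt{t/2\pi}$ to each factor re-introduces $\chi$-dual pieces and a four-variable diagonal $nhm_2=m_1k$, which gains nothing and complicates the off-diagonal.

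Second, and more importantly, your off-diagonal estimate is pointed at the wrong tool. You invoke Deshouillers--Iwaniec bounds on bilinear forms in Kloosterman fractions, ``paralleling the delicate off-diagonal analysis at the heart of Conrey's $4/7$ result.'' That heavy machinery is needed for $I_1=\int|V\psi_1|^2$, which is a genuine second moment. But after the functional equation reduction, $I_{12}$ is a mean value of $\zeta^2$ against a Dirichlet polynomial, not a second moment, and the off-diagonal is much tamer. Setting $nk=hl-f$, the paper reduces to estimating $\sum_{l\equiv \overline{h_1}f_1 \pmod{n_1}} d(l)\,g(l)$ and applies the classical Voronoi formula for the divisor function in arithmetic progressions (Lemma~\ref{lemma:Voronoi}, citing Exercise~7 of \cite{IK} p.~79) together with a prime-number-theorem-type cancellation in the remaining sum over $n$ (Lemma~\ref{lemma:S1}). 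No Kloosterman sums or spectral input are needed, and the constraints that fall out, namely \eqref{eq:thetabounds}, are exactly what $\theta_2<\theta_1<4/7$ gives. Missing this structural simplification and reaching for Deshouillers--Iwaniec is the real gap in your proposal: you would be trying to prove a much harder statement than necessary.

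Finally, your claim that ``the Jacobian of this parameterization is $u^2(1-u)$'' is not how these factors arise. In the paper the $(1-u)$ comes from the $k=2$ case of the Euler--Maclaurin lemma (Lemma~\ref{lemma:EulerMacCross}) applied to the sum over $n$ with weight $d(n)$, and the $u^2$ comes from the explicit residue computation of the inner contour integral $K_2$ (Lemma~\ref{lemma:K1K2}), which also forces $P_2$ to vanish to third order at $0$ so that $P_2''$ appears. These are algebraic outputs of contour shifts, not a change-of-variables Jacobian.
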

\begin{mytheo}
\label{thm:mainterm2}
Suppose $\theta_2 < \half$.  Then
\begin{equation}
 I_{2} = \intR w(t) |V|^2 |\psi_2|^2 (\sigma_0 + it) dt = c_2 \widehat{w}(0) + O(TL^{-1+\varepsilon}),
\end{equation}
where
\begin{multline}
\label{eq:c2}
 c_2 = \frac{2}{3} \frac{d^4}{dx^2 dy^2}
\left[ \int_0^{1} \int_0^{1} \int_0^{1} \int_0^{1} (1-r)^{4} (\frac{1}{\theta_2} + (x+y-v(y+r)-u(x+r))) 
\right.
\\
e^{-\theta_2 R(x+y-v(y+r) -u(x+r))} Q(\theta_2(-y + u(x+r)) + t(1+\theta_2(x+y-v(y+r)-u(x+r)))) 
\\
e^{2Rt(1+\theta_2(x+y-v(y+r)-u(x+r)))}  Q(\theta_2(-x + v(y+r)) + t(1+\theta_2(x+y-v(y+r)-u(x+r))))
\\
\left.  (x+r)(y+r) P_2''\left((1-u)(x + r)\right) P_2''\left((1-v)(y + r)\right) dt \, dr \, du \, dv
\right]_{x=y=0}.
\end{multline}

\end{mytheo}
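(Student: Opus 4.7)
The plan is to compute $I_2$ by introducing complex shift parameters $\alpha,\beta$ of size $O(1/L)$ and studying the generating integral
$$\mathcal{J}(\alpha, \beta) = \intR w(t)\,\zeta(\thalf+\alpha+it)\,\zeta(\thalf+\beta-it)\,|\psi_2(\sigma_0+it)|^2\,dt,$$
from which $I_2$ is recovered by applying $Q\bigl(-L^{-1}\partial_\alpha\bigr)Q\bigl(-L^{-1}\partial_\beta\bigr)$ and specializing at $\alpha=\beta=-R/L$. The key simplification is that at $s=\sigma_0+it$ the factor $\chi(s+\thalf-\sigma_0)$ appearing in $\psi_2$ becomes $\chi(\thalf+it)$, and the functional equation $\chi(s)\chi(1-s)=1$ together with $\overline{\chi(s)}=\chi(\bar s)$ yields $|\chi(\thalf+it)|^2=1$. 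Therefore
$$|\psi_2(\sigma_0+it)|^2=\sum_{\substack{h_1k_1\le y_2 \\ h_2k_2\le y_2}}\frac{\mu_2(h_1)\mu_2(h_2)P_2[h_1k_1]P_2[h_2k_2]}{(h_1k_2)^{1/2+it}(k_1h_2)^{1/2-it}},$$
with no explicit $\chi$ remaining.

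Opening this double sum, $\mathcal{J}$ becomes a sum over $(h_1,k_1,h_2,k_2)$ of twisted shifted second moments
$$\int w(t)\,\zeta(\thalf+\alpha+it)\,\zeta(\thalf+\beta-it)\,(h_1k_2)^{-1/2-it}\,(k_1h_2)^{-1/2+it}\,dt.$$
I would next invoke a uniform asymptotic for this twisted second moment (following, for instance, Atkinson's formula or the Duke--Friedlander--Iwaniec delta method combined with the approximate functional equation), producing a \emph{diagonal main term} carrying $\zeta(1+\alpha+\beta)$ and a \emph{dual main term} obtained by applying the functional equation inside one of the zetas, the latter introducing additional $\chi$ factors that by Stirling scale as $(t/2\pi)^{-\alpha-\beta}$. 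The factor $\chi(s+\thalf-\sigma_0)$ in $\psi_2$ is chosen precisely so that this dual contribution recombines coherently with the diagonal rather than cancelling, which is the essential insight behind this mollifier. Summing over the four variables by Mellin inversion, with the sharp cutoffs $h_ik_i\le y_2$ and the polynomials $P_2$ written as contour integrals, the resulting generating Dirichlet series factors through $\zeta(1+\alpha+\beta)/\zeta^2(\cdot)\zeta^2(\cdot)$, and a residue analysis around $\alpha+\beta=0$ (where the simple pole of $\zeta$ is balanced by the double zero of $1/\zeta^2$) yields a finite main contribution.

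To finish, I apply the $Q$-differential operators, specialize $\alpha,\beta\to -R/L$, and expand in a Taylor series around $\alpha+\beta=0$. Recasting the resulting polynomial-in-$R$ expressions as integrals of $P_2''$ and $Q$ against the simplex parameterized by $r,u,v,t$ yields the stated quadruple integral for $c_2$: the variables $u$ and $v$ correspond to the splits $h_i{:}k_i$ inside the two mollifier factors, $r$ tracks the relative size of $h_ik_i$ to $y_2$, and $t$ arises from integrating the $Q$-derivatives. The main obstacle is establishing the twisted shifted second-moment asymptotic uniformly in $m=h_1k_2$ and $n=k_1h_2$ up to size $y_2^2=T^{2\theta_2}$ with an error of size $TL^{-1+\varepsilon}$; this reduces essentially to a shifted binary additive-divisor problem, and the hypothesis $\theta_2<\tfrac12$ is the sharp constraint that controls the off-diagonal contribution, since $\psi_2$ does not enjoy the extra conductor-like saving available to the classical mollifier $\psi_1$ (which tolerates $\theta_1<\tfrac47$).
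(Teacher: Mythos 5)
Your overall strategy is the paper's: introduce shifts $\alpha,\beta$, note that $|\chi(\thalf+it)|^2=1$ so the $\chi$-factor drops from $|\psi_2|^2$, open the square into twisted second moments of $\zeta$, evaluate those by a diagonal-only asymptotic with dual main term carrying $(t/2\pi)^{-\alpha-\beta}$, then Mellin-invert, take residues, and apply the $Q$-operators. However, several of your explanatory claims are wrong, and some of the substantive work is missing. (1) You first (correctly) observe that $\chi$ cancels out of $|\psi_2|^2$, and then claim that the same $\chi$ is ``chosen precisely so that this dual contribution recombines coherently with the diagonal''; these cannot both be true. The dual main term is the standard second main term present in \emph{any} twisted second moment of $\zeta$, and the $\chi$ in $\psi_2$ plays no role in $I_2$ whatsoever --- it matters only in the cross term $I_{12}$, where it combines with $\chi(\thalf+\beta-it)$ to give the $(t/2\pi)^{-\beta}$ factor. (2) You cast the off-diagonal contribution as a ``shifted binary additive-divisor problem'' requiring Atkinson or the DFI method. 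It is not: for $\theta_2<\thalf$ we have $h_1k_2\cdot k_1h_2\le y_2^2<T^{1-\varepsilon}$, and the off-diagonal terms are killed by repeated integration by parts against $w(t)(m/n)^{it}$. That is precisely the point of requiring $\theta_2<\thalf$, and why the paper needs only the ``easy'' diagonal twisted second moment of Proposition~\ref{prop:twisted}. (3) The $t$-variable in the final quadruple integral does not come from ``integrating the $Q$-derivatives''; it arises from the identity $\frac{1-z^{-\alpha-\beta}}{\alpha+\beta}=\log z\int_0^1 z^{-t(\alpha+\beta)}\,dt$, used to combine the diagonal piece $I_2'$ with the dual piece $I_2''$ into an expression regular at $\alpha+\beta=0$.

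More seriously, the passage from the $z=0$ residue to \eqref{eq:c2} is genuinely delicate and your sketch elides it. The arithmetic factorization produces a $\zeta^5(1+s+u)$ in the numerator (not just a single $\zeta$), which after opening as a Dirichlet series over $d_5(m)$ is what ultimately yields the $(1-r)^4$ factor by Euler--Maclaurin. The $s$- and $u$-contours must then be pushed to the left of the origin, but an $O(1)$ error is not acceptable here; one needs a zero-free-region contour shift that produces a $(y_2/m)^{-\nu}$ decay factor together with a log-saving divisor estimate to control it (the paper's Lemmas~\ref{lemma:L1computed} and~\ref{lemma:logsave}), and then a Beta-function identity to separate the variables in the resulting small-circle residue integrals and obtain the $u,v$-integrals (Lemma~\ref{lemma:L1exact}). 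These steps are the substance of the proof, and ``recasting the resulting polynomial-in-$R$ expressions as integrals'' does not begin to capture them.
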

\noindent {\bf Remark}.  Note that $c_{12}$ is real so that $I_{12} \sim \overline{I_{12}}$.

\subsection{The shift parameters}
\label{section:shifts}
Rather than working directly with $V(s)$, we shall instead consider the following two general integrals:
\begin{equation}
\label{eq:I12def}
 I_{12}(\alpha,\beta) = \intR w(t) \zeta(\thalf + \alpha + it) \zeta(\thalf + \beta -it) \overline{\psi_1} \psi_2(\sigma_0 + it) dt,
\end{equation}
and
\begin{equation}
\label{eq:I2def}
 I_{2}(\alpha,\beta) = \intR w(t) \zeta(\thalf + \alpha + it) \zeta(\thalf + \beta -it) |\psi_2(\sigma_0 + it)|^2 dt.
\end{equation}
Our main goal in the rest of the paper is in proving the following two lemmas.
\begin{mylemma}
\label{lemma:mainterm12}
We have
\begin{equation}
 I_{12}(\alpha, \beta) = c_{12}(\alpha, \beta) \widehat{w}(0) + O(T/L),
\end{equation}
uniformly for $\alpha, \beta \ll L^{-1}$, where
\begin{multline}
 c_{12}(\alpha, \beta) = 4 \frac{ \theta_2^2}{\theta_1^2} \frac{d^2}{dx dy}\left[ \mathop{\int \int}_{0 \leq a+b \leq 1} 
  \int_0^{1} u^2 (1-u) (y_1^{-x} y_2^{au})^{-\alpha} (y_1^{y} y_2^{-ub} T)^{-\beta} 
\right.
\\
\left.  P_1\left(x+y+ 1-(1-u)\frac{ \theta_2}{\theta_1}\right) P_2''((1-a-b)u) du \, da \, db \right]_{x=y=0} .
\end{multline}
\end{mylemma}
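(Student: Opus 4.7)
The plan follows the standard mollifier-moment strategy laid out in \cite{Conrey25} and \cite{CS}: open both $\overline{\psi_1}$ and $\psi_2$ as Dirichlet sums, invoke an asymptotic for the resulting $\chi$-twisted second moment of $\zeta$, and then repackage the main term as a multiple contour integral whose residues yield the stated closed form. Concretely, writing $\overline{\psi_1(\sigma_0+it)} = \sum_{m\le y_1}\mu(m)P_1[m]\,m^{-1/2+it}$ and extracting $\chi(\tfrac12+it)$ from $\psi_2$,
\begin{equation*}
I_{12}(\alpha,\beta) = \sum_{m\le y_1}\sum_{hk\le y_2}\frac{\mu(m)\,\mu_2(h)\,P_1[m]\,P_2[hk]}{(mhk)^{1/2}}\,J(m,h,k;\alpha,\beta),
\end{equation*}
where
\begin{equation*}
J(m,h,k;\alpha,\beta) = \int_\mr w(t)\,\chi(\tfrac12+it)\,\zeta(\tfrac12+\alpha+it)\,\zeta(\tfrac12+\beta-it)\,(k/mh)^{it}\,dt.
\end{equation*}

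The next step is to evaluate $J$. Applying the functional equation $\zeta(\tfrac12+\beta-it)=\chi(\tfrac12+\beta-it)\zeta(\tfrac12-\beta+it)$ together with the Stirling asymptotic $\chi(\tfrac12+it)\chi(\tfrac12+\beta-it)=(|t|/2\pi)^{-\beta}(1+O(1/|t|))$ recasts $J$ as an ordinary twisted second moment of the shifted product $\zeta(\tfrac12+\alpha+it)\zeta(\tfrac12-\beta+it)$ with twist $(k/mh)^{it}$ and an extra factor $(|t|/2\pi)^{-\beta}$. The standard asymptotic for this moment -- the unconditional analogue of the ratios conjecture formula (cf.\ \cite{CFZ,CS}) -- produces a diagonal contribution from matched Dirichlet-series indices plus a swap contribution obtained by a further functional-equation reflection; both terms assemble into a clean Euler-product expression $\widehat{w}(0)\,g_{\alpha,\beta}(m,h,k)+(\text{error})$. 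This is the technical heart of the argument and the main obstacle: since the twist length satisfies $mhk\ll y_1 y_2\sim T^{15/14}$ and so exceeds $T$, the swap produces a genuine shifted-convolution sum whose estimation requires the full strength of Weil's bound for Kloosterman sums (or the Duke--Friedlander--Iwaniec delta method). The hypothesis $\theta_2<\theta_1<4/7$ is precisely calibrated to keep this residual error at $O(T/L)$.

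With the asymptotic for $J$ in hand, I would substitute back into the triple sum and represent $P_1[m]$ via a Mellin integral in a single complex variable $x$, and $P_2[hk]$ via a representation that separates the $h$- and $k$-dependence through auxiliary real parameters $a,b,u\in[0,1]$ with $a+b\le 1$, encoding $\log h=au\log y_2$ and $\log k=bu\log y_2$. The resulting $\mu$- and $\mu_2$-sums collapse to quotients of shifted zeta-functions, contributing denominator factors of $\zeta(1+\cdots)$ and $\zeta^2(1+\cdots)$ respectively; shifting contours past these poles and extracting the leading residues yields the density $u^2(1-u)$ from the Mellin measure, the $P_2''$-factor arising because the triple vanishing $P_2(0)=P_2'(0)=P_2''(0)=0$ forces a third-order residue, and the prefactor $4(\theta_2/\theta_1)^2$ from the $\log y_1$- and $\log y_2$-normalisations. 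The outer operator $d^2/dx\,dy|_{x=y=0}$ encodes the combined second-order residue at the double pole of $1/\zeta^2(1+\cdot)$ dressed by the Mellin kernel of $P_1$, after which one reads off the claimed formula for $c_{12}(\alpha,\beta)$.
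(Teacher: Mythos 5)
Your overall scaffolding — open both mollifiers, strip the $\chi(\tfrac12+it)$ factor from $\psi_2$ via the functional equation, and isolate a diagonal main term — matches the paper's opening moves (Lemma \ref{lemma:I12intermediate}). But there are two genuine errors in the middle of your argument.

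First, after $\chi(\tfrac12+it)\chi(\tfrac12+\beta-it)\sim(t/2\pi)^{-\beta}$ is applied, the remaining product is $\zeta(\tfrac12+\alpha+it)\zeta(\tfrac12-\beta+it)$, with \emph{both} factors evaluated at the same half-line $\tfrac12+it$. This is not a ``twisted second moment'' of the form $\zeta(\cdot+it)\zeta(\cdot-it)$ and it has no diagonal-plus-swap structure: there is no second term coming from a functional-equation reflection. The paper instead expands it by the one-long-sum approximate functional equation (Lemma \ref{lemma:longAFE}), so that the whole product becomes $\sum_l \sigma_{\alpha,-\beta}(l)l^{-1/2-it}e^{-l/T^3}$, and the $t$-integral $\widehat{w_0}(\tfrac{1}{2\pi}\log\tfrac{hl}{nk})$ provides the truncation. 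The diagonal $hl=nk$ gives the main term and everything else is genuinely an off-diagonal error. Your ``swap contribution'' is a feature of the $I_2$ analysis (Proposition \ref{prop:twisted}), not of $I_{12}$, and importing it here is a conceptual mistake.

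Second, the off-diagonal terms are not handled by Weil's bound for Kloosterman sums or the delta method. The paper's Lemma \ref{lemma:Cbound} writes the condition $hl-nk=f$ as the congruence $hl\equiv f\pmod n$, applies partial summation in $l$, and then invokes the Voronoi-type asymptotic for $\sum_{l\le t,\ l\equiv a\,(n_1)}d(l)$ with error $O((\sqrt{n_1}+t^{1/3})(n_1t)^{\varepsilon})$ (Lemma \ref{lemma:Voronoi}, citing \cite{IK}). The resulting error term $C_E$ is then beaten down under the constraints \eqref{eq:thetabounds}, while the Voronoi main term is shown to be negligible by a prime-number-theorem-type cancellation in the $\mu(n)$ sum (Lemma \ref{lemma:S1}). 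None of this needs Kloosterman-sum estimates, and the length constraints that make it work are exactly \eqref{eq:thetabounds}, which at $\theta_1=4/7$ amount to $\theta_2<4/7$ — not a Weil-type threshold. Your last paragraph on extracting $c_{12}(\alpha,\beta)$ is closer in spirit (Mellin representations of $P_1,P_2$, residues from $1/\zeta$-factors, and a final Euler–Maclaurin step), but the $a,b$ variables actually come from iterating the identity $\frac{1}{\alpha+u}=\int_{1/q}^1 r^{\alpha+u-1}dr+\frac{q^{-\alpha-u}}{\alpha+u}$ inside a small-circle contour integral for $K_2$, and the $u$-variable comes from Lemma \ref{lemma:EulerMacCross} applied to the final $n$-sum, rather than from a direct parametrization of $\log h$ and $\log k$.
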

\begin{mylemma}
\label{lemma:mainterm2}
We have
\begin{equation}
 I_{2}(\alpha, \beta) = c_{2}(\alpha, \beta) \widehat{w}(0) + O(T L^{-1+\varepsilon}),
\end{equation}
uniformly for $\alpha, \beta \ll L^{-1}$, where
\begin{multline}
\label{eq:c2alpha}
 c_{2}(\alpha, \beta) = \frac{2}{3} \frac{d^4}{dx^2 dy^2}
\left[ \int_0^{1} \int_0^{1} \int_0^{1} \int_0^{1} (1-r)^{4} y_2^{\beta(x-v(y+r)) + \alpha(y-u(x+r))}
\right.
\\
(\frac{1}{\theta_2} + (x+y-v(y+r)-u(x+r))) (Ty_2^{x+y - v(y+r) - u(x+r)})^{-t(\alpha+\beta)} (x+r)(y+r)
\\
\left.   P_2''\left((1-u)(x + r)\right) P_2''\left((1-v)(y + r)\right) dt \, dr \, du \, dv
\right]_{x=y=0}.
\end{multline}
\end{mylemma}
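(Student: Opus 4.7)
The plan is to reduce $I_2(\alpha,\beta)$ to a twisted second moment of $\zeta$, apply a known asymptotic formula for that moment, sum the result over the mollifier coefficients by contour integration, and identify the outcome with the explicit expression in \eqref{eq:c2alpha}. First I substitute the definition \eqref{eq:psi2def} of $\psi_2$. At $s=\sigma_0+it$ the argument of $\chi(s+\tfrac12-\sigma_0)$ is $\tfrac12+it$, and the functional equation $\chi(\tfrac12+it)\chi(\tfrac12-it)=1$ causes the two $\chi$-factors in $|\psi_2|^2$ to cancel exactly. What remains is
\begin{equation*}
I_2(\alpha,\beta) = \sum_{\substack{h_1k_1\leq y_2 \\ h_2k_2\leq y_2}} \frac{\mu_2(h_1)\mu_2(h_2)P_2[h_1k_1]P_2[h_2k_2]}{\sqrt{h_1k_1h_2k_2}}\, J(\alpha,\beta;k_1h_2,h_1k_2),
\end{equation*}
where $J(\alpha,\beta;u,v)=\intR w(t)\zeta(\tfrac12+\alpha+it)\zeta(\tfrac12+\beta-it)(u/v)^{it}dt$.

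For the second step, I write $\ell=(u,v)$, $u=\ell m$, $v=\ell n$ with $(m,n)=1$, and apply the standard asymptotic formula for the shifted second moment, valid with a power-saving error as long as $mn\leq T^{1-\epsilon}$. The hypothesis $\theta_2<\tfrac12$ gives $mn\leq uv\leq h_1k_1\cdot h_2k_2\leq y_2^{2}<T^{1-2\epsilon}$, so we sit comfortably inside the admissible range. The formula produces a ``straight'' piece containing $\zeta(1+\alpha+\beta)$ and a ``swapped'' piece containing $\zeta(1-\alpha-\beta)$ (the latter carrying the factor $(t/2\pi)^{-\alpha-\beta}$ coming from the functional equation), each multiplied by explicit local factors built out of divisor sums in $\ell,m,n$.

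Third, I insert the result into the quadruple sum and carry out the summation. Representing each $P_2[h_jk_j]$ via Mellin inversion, the inner Dirichlet series becomes an Euler product that simplifies into $\zeta$-values with small shifts. The coefficients $\mu_2$ supply $1/\zeta^{2}$-factors which, combined with $\zeta(1\pm(\alpha+\beta))$ from the preceding step, give poles of exactly the right orders in the Mellin variables. Shifting contours and evaluating residues yields a convergent multiple integral over bounded parameters, which I match to \eqref{eq:c2alpha} by an explicit change of variables: $r$ encodes the joint Mellin height of the $hk$-sums; $u$ and $v$ come from the two Mellin representations of $P_2$; and $t\in[0,1]$ interpolates between the straight diagonal ($t=0$) and the swapped one ($t=1$) through the factor $(Ty_2^{x+y-v(y+r)-u(x+r)})^{-t(\alpha+\beta)}$. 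The vanishing conditions $P_2(0)=P_2'(0)=P_2''(0)=0$ are precisely what is needed to kill the boundary contributions from these contour shifts, so that only $P_2''$ shows up in the final formula.

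The main obstacle will be step two: producing $J(\alpha,\beta;u,v)$ with a power-saving error, uniformly in the shifts $\alpha,\beta\ll L^{-1}$ and in the twist parameters up to $uv\leq T^{1-\epsilon}$. Such a formula is essentially standard (compare Atkinson, Motohashi, or the approximate-functional-equation approach), but the uniformities must be tracked carefully so that the error survives the fourfold sum and collapses to the claimed $O(TL^{-1+\epsilon})$. A secondary but nontrivial task is the combinatorial identification of the resulting quadruple contour integral with the compact closed form of \eqref{eq:c2alpha}, which demands finding the correct reparametrization from the natural Mellin coordinates to $(r,u,v,t)$ on the unit hypercube.
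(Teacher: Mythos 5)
Your outline follows the paper's argument at every structural step: reduce $I_2$ to a twisted second moment of $\zeta$ (Proposition~\ref{prop:twisted}), keep both the ``straight'' and ``swapped'' diagonal pieces, Mellin-invert the two $P_2$-weights, collapse the arithmetic sum into a ratio of $\zeta$-factors times an Euler product, shift contours and pick up residues, and finally reparametrize onto the unit hypercube with $t$ interpolating between the two diagonals via $(Ty_2^{\cdots})^{-t(\alpha+\beta)}$. That last observation about the role of $t$ is exactly what the paper does when it combines $I_2'$ and $I_2''$ through the identity $\frac{1-z^{-\alpha-\beta}}{\alpha+\beta}=\log z\int_0^1 z^{-t(\alpha+\beta)}\,dt$.

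Where your plan has a real gap is in where you locate the difficulty. You single out the twisted second moment $J(\alpha,\beta;u,v)$ as the main obstacle, but with $uv\le y_2^2\le T^{1-\varepsilon}$ the off-diagonal terms are negligible by repeated integration by parts; the paper calls this ``easy'' and quotes Proposition~\ref{prop:twisted} without proof. The genuinely delicate step is later: after replacing $\zeta^5(1+s+u)$ by its Dirichlet series and separating the $s$- and $u$-integrals into $L_1$ and $L_2$, a naive prime-number-theorem contour shift gives only an $O(1)$ error in each $L_i$. Summing that against $d_5(m)/m$ over $m\le y_2$ and normalizing by $(\log y_2)^{i+j}$ then produces an error of size $O(T)$ --- not an error term at all. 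The paper's Lemma~\ref{lemma:L1computed} gets around this by pushing the contour into a zero-free region of width $\asymp 1/\log\log y_2$, replacing $O(1)$ by $O((y_2/m)^{-\nu}L^\varepsilon)$, and then Lemma~\ref{lemma:logsave} is invoked to save one more logarithm in $\sum_m d_5(m)m^{-1}(y_2/m)^{-\nu}$. Only the combination of these two devices, together with $i,j\ge 3$ from $P_2(0)=P_2'(0)=P_2''(0)=0$, yields the claimed $O(TL^{-1+\varepsilon})$. Your sketch doesn't identify either device, and the vanishing conditions on $P_2$ are doing more than ``killing boundary contributions'' --- they supply the extra powers of $L$ needed to make these error terms acceptable.
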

We now prove that Theorems \ref{thm:mainterm12} and \ref{thm:mainterm2} follow from Lemmas \ref{lemma:mainterm12} and \ref{lemma:mainterm2}, respectively.
Let $I_{\star}$ denote either $I_{12}$ or $I_{2}$.  Note
\begin{equation}
\label{eq:diffop}
I_{\star} =   Q\Big(\frac{-1}{\log{T}} \frac{d}{d\alpha}\Big) Q\Big(\frac{-1}{\log{T}} \frac{d}{d\beta}\Big) I_{\star}(\alpha, \beta) \Big|_{\alpha=\beta=-R/L}.
\end{equation}
We first argue that we can obtain either $c_{\star}$ by applying the above differential operator to the corresponding $c_{\star}(\alpha, \beta)$.  Since $I_{\star}(\alpha, \beta)$ and $c_{\star}(\alpha, \beta)$ are holomorphic with respect to $\alpha, \beta$ small, the derivatives appearing in \eqref{eq:diffop} can be obtained as integrals of radii $\asymp L^{-1}$ around the points $-R/L$, using Cauchy's integral formula.  Since the error terms hold uniformly on these contours, the same error terms that hold for $I_{\star}(\alpha, \beta)$ also hold for $I_{\star}$.

Next we check that applying the above differential operator to $c_{\star}(\alpha, \beta)$ does indeed give $c_{\star}$.
Notice the formula
\begin{equation}
\label{eq:Qop}
 Q\Big(\frac{-1}{\log{T}} \frac{d}{d\alpha}\Big) X^{-\alpha} = Q\Big( \frac{\log{X}}{\log{T}} \Big) X^{-\alpha}.
\end{equation}
Using \eqref{eq:Qop}, we have
\begin{multline}
 Q\Big(\frac{-1}{\log{T}} \frac{d}{d\alpha}\Big) Q\Big(\frac{-1}{\log{T}} \frac{d}{d\beta}\Big) c_{12}(\alpha,\beta) = 4 \frac{ \theta_2^2}{\theta_1^2} \frac{d^2}{dx dy}\Big[ \mathop{\int \int}_{0 \leq a +b \leq 1}
\\
\left. 
  \int_0^{1} u^2 (1-u) (y_1^{-x} y_2^{au})^{-\alpha} (y_1^{y} y_2^{-ub}T)^{-\beta} Q(-x \theta_1 + au \theta_2) Q(1+y \theta_1 -bu \theta_2)
\right.
\\
    P_1\Big(x+y+ 1-(1-u)\frac{ \theta_2}{\theta_1}\Big) P_2''((1-a-b)u) du \, da \, db \Big]_{x=y=0} .
\end{multline}
Setting $\alpha = \beta = -R/L$ and simplifying 
gives \eqref{eq:c12}.  A similar argument produces \eqref{eq:c2} from \eqref{eq:c2alpha}.

We prove Lemma \ref{lemma:mainterm12} in Section \ref{section:cross}, and Lemma \ref{lemma:mainterm2} in Section \ref{section:diag}.

\section{Various lemmas}
In this section we gather some miscellaneous results that are more or less standard.  These will be used to calculate the constants $c_{\star}(\alpha, \beta)$, and we place them here to avoid interrupting the forthcoming arguments.

\subsection{Approximate functional equations}
In the calculation of $I_{12}$ we shall need the following approximate functional equation with one long sum.
\begin{mylemma}
\label{lemma:longAFE}
Let $\sigma_{\alpha,-\beta}(l) = \sum_{ab = l} a^{-\alpha} b^{\beta}$.
 For $L^2 \leq |t| \leq 2T$ and uniformly for $\alpha, \beta \ll L^{-1}$,
\begin{equation}
\label{eq:longAFE}
 \zeta(\thalf + \alpha + it) \zeta(\thalf - \beta +it) = \sum_{l=1}^{\infty} \frac{\sigma_{\alpha, -\beta}(l)}{l^{\half + it}} e^{-l/T^3} + O(T^{-1 + \varepsilon}).
\end{equation}
\end{mylemma}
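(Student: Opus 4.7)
The plan is to use a Mellin-Perron argument with a smooth cutoff, exactly in the style of standard approximate functional equations.

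First I would use the Mellin representation $e^{-x}=\frac{1}{2\pi i}\int_{(c)}\Gamma(w)x^{-w}\,dw$ (valid for $\Re w>0$) with $x=l/T^{3}$. Plugging this into the sum on the right-hand side of \eqref{eq:longAFE} and interchanging sum and integral (legitimate for $c$ large enough that the sum converges absolutely, since the Dirichlet series $\sum_l \sigma_{\alpha,-\beta}(l)l^{-s}=\zeta(s+\alpha)\zeta(s-\beta)$ converges for $\Re s>1$), one obtains
\begin{equation*}
\sum_{l=1}^{\infty}\frac{\sigma_{\alpha,-\beta}(l)}{l^{1/2+it}}e^{-l/T^{3}}=\frac{1}{2\pi i}\int_{(c)}\Gamma(w)\,T^{3w}\,\zeta(\tfrac12+\alpha+it+w)\,\zeta(\tfrac12-\beta+it+w)\,dw.
\end{equation*}

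Next I would shift the contour from $\Re w=c$ to $\Re w=-1+\varepsilon$. Three kinds of poles lie in the strip. The pole of $\Gamma(w)$ at $w=0$ contributes the residue $\zeta(\tfrac12+\alpha+it)\zeta(\tfrac12-\beta+it)$, which is precisely the left-hand side of \eqref{eq:longAFE}. The simple poles of the two zeta factors occur at $w=\tfrac12-\alpha-it$ and $w=\tfrac12+\beta-it$; at these points $\Gamma(w)$ carries the factor $\Gamma(\tfrac12\mp\alpha-it+O(L^{-1}))$ whose modulus is $\ll e^{-\pi|t|/2}$ by Stirling. Since $|t|\ge L^{2}=(\log T)^{2}$, this contribution is $\ll e^{-(\log T)^{2}}$, negligible compared with the claimed error.

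It then remains to bound the shifted integral on the line $\Re w=-1+\varepsilon$. On this line the factor $\Gamma(w)T^{3w}$ contributes $T^{-3+3\varepsilon}$ times a function of $\Im w$ that decays like $e^{-\pi|\Im w|/2}$. Invoking the functional equation for $\zeta$, each zeta factor on this line is bounded by $(1+|t+\Im w|)^{1+\varepsilon}$ (convexity). Because $|t|\le 2T$ and the Gamma decay makes the contribution of $|\Im w|\gg L$ trivially negligible, the integral is
\begin{equation*}
\ll T^{-3+3\varepsilon}\int_{-\infty}^{\infty}(1+|t+v|)^{2+2\varepsilon}e^{-\pi|v|/2}\,dv\ll T^{-3+3\varepsilon}\cdot T^{2+2\varepsilon}\ll T^{-1+\varepsilon},
\end{equation*}
which is the desired error term.

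No single step is a real obstacle; the only issue requiring a little care is verifying that no pole of $\zeta(\tfrac12+\alpha+it+w)\zeta(\tfrac12-\beta+it+w)$ approaches $w=0$ (which it does not, since $|t|\ge L^{2}$ pushes the zeta poles to $\Re w=\tfrac12$ with large imaginary part) and confirming that the convexity bound combined with the Gamma decay is enough to absorb the factor $T^{2}$ coming from $|t|^{2}$. The shift $\Re w=-1+\varepsilon$ is chosen precisely so that $T^{3\Re w}\cdot|t|^{2}\asymp T^{-1}$, which is why $e^{-l/T^{3}}$ (rather than a weaker cutoff) is used.
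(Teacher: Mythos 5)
Your proposal is correct and follows essentially the same route as the paper: represent $e^{-l/T^3}$ by its Mellin integral, swap sum and integral to produce the contour integral of $\Gamma(w)T^{3w}\zeta(\tfrac12+\alpha+it+w)\zeta(\tfrac12-\beta+it+w)$, shift to $\Re w=-1+\varepsilon$, pick up the main term at $w=0$, kill the zeta poles at $\Re w=\tfrac12$ using the exponential decay of $\Gamma$ together with $|t|\geq L^2$, and bound the shifted integral via the functional-equation estimate $\zeta(\sigma+iv)\ll|v|^{1/2-\sigma}$ for $\sigma<0$. (One small omission: the residues at $w=\tfrac12-\alpha-it$ and $w=\tfrac12+\beta-it$ also carry the factor $|T^{3w}|=T^{3/2}$, but since $T^{3/2}e^{-c(\log T)^2}$ is still $O(T^{-A})$ for any $A$, the conclusion is unaffected.)
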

\noindent {\bf Remark.}  The exponential decay effectively means that $l$ is truncated at $T^3$.  An essentially identical proof could truncate the sum at $T^{2 + \varepsilon}$ but it would not help the later arguments.
\begin{proof}[Proof of Lemma \ref{lemma:longAFE}]
Consider the following sum
\begin{equation*}
A= \sum_{l=1}^{\infty} \frac{\sigma_{\alpha, -\beta}(l)}{l^{\half + it}} e^{-l/V},
\end{equation*}
where $V$ is a parameter to be chosen momentarily.  Using the formula $e^{-x} = \frac{1}{2 \pi i} \int_{(1)} \Gamma(s) x^{-s} ds$, we get
\begin{equation*}
 A = \frac{1}{2 \pi i} \int_{(1)} V^{s} \Gamma(s)  \zeta(\thalf + \alpha + it + s) \zeta(\thalf - \beta +it +s) ds.
\end{equation*} 
Next we move the line to $\sigma = -1+\varepsilon$, crossing poles at $s=0, \half -\alpha - it, \half +\beta -it$.  Using the bound $\zeta(\sigma + it) \ll |t|^{\half - \sigma}$ for $\sigma < 0$ and the exponential decay of the gamma function, we get
\begin{equation*}
 A = \zeta(\thalf + \alpha + it) \zeta(\thalf - \beta +it) + O(\sqrt{V} e^{-|t|} + V^{-1 + \varepsilon} |t|^2).
\end{equation*}
Thus taking $V = T^3$ finishes the proof.
\end{proof}

A key step in asymptotically evaluating $I_2$ is finding an asymptotic for the ``twisted'' second moment of zeta.  This is essentially implicit in \cite{BCHB} but we cannot quite quote the result we need.  Nevertheless, we state the following without proof
\begin{myprop}
\label{prop:twisted}
Suppose $w$ satisfies \eqref{eq:w1}-\eqref{eq:w3}, and $a$ and $b$ are positive integers with $ab \leq T^{1-\varepsilon}$. Then uniformly for $\alpha, \beta \ll L^{-1}$, we have
\begin{multline}
 \intR \leg{a}{b}^{-it} w(t) \zeta(\thalf + \alpha + it) \zeta(\thalf + \beta - it) dt 
= 
\sum_{am = b n} \frac{1}{m^{\half + \alpha} n^{\half + \beta}} \intR V_t(mn) w(t) dt
\\
+ \sum_{a m = b n} \frac{1}{m^{\half - \beta} n^{\half - \alpha}} \intR V_t(mn) \leg{t}{2 \pi}^{-\alpha - \beta} w(t) dt + O(T^{-1/2}).
\end{multline}
Here $V_t(x)$ is given by
\begin{equation*}
 V_t(x) = \frac{1}{2\pi i} \int_{(1)} \leg{t}{2 \pi x}^{z} \frac{G(z)}{z} dz,
\end{equation*}
where $G(z) = e^{z^2} p(z)$ and $p(z) = \frac{(\alpha+\beta)^2-(2z)^2}{(\alpha + \beta)^2}$.
\end{myprop}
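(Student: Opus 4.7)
The plan is to follow the standard route for twisted second moments of $\zeta$: apply an approximate functional equation (AFE) to the product $\zeta(\thalf+\alpha+it)\zeta(\thalf+\beta-it)$, interchange the summation and the $t$-integral, isolate the diagonal contribution $am=bn$, and then bound the off-diagonal by combining integration by parts in $t$ with arithmetic tools (Voronoi summation and Weil's bound for Kloosterman sums).

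\emph{Step 1 (AFE).} I would start from the contour identity
\begin{equation*}
\frac{1}{2\pi i}\int_{(1)}\zeta(\thalf+\alpha+it+z)\zeta(\thalf+\beta-it+z)\frac{G(z)}{z}\,dz,
\end{equation*}
and move the line to $\mathrm{Re}(z)=-1$, picking up the pole at $z=0$ (which produces the left side of the AFE) and the poles at $z=\thalf-\alpha-it$ and $z=\thalf+\beta-it$. The factor $p(z)=((\alpha+\beta)^2-(2z)^2)/(\alpha+\beta)^2$ inside $G$ is inserted precisely so these two residues combine, after an application of the $\zeta$ functional equation in the shifted integral, into a clean dual sum. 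The result is an AFE
\begin{equation*}
\zeta(\thalf+\alpha+it)\zeta(\thalf+\beta-it)=\sum_{m,n}\frac{(m/n)^{-it}V_t(mn)}{m^{\thalf+\alpha}n^{\thalf+\beta}}+X_{\alpha,\beta}(t)\sum_{m,n}\frac{(m/n)^{-it}V_t^{*}(mn)}{m^{\thalf-\beta}n^{\thalf-\alpha}},
\end{equation*}
with $X_{\alpha,\beta}(t)=\chi(\thalf+\alpha+it)\chi(\thalf+\beta-it)=(t/2\pi)^{-\alpha-\beta}(1+O(1/t))$ by Stirling, and $V_t^{*}$ the analogue of $V_t$ with $(\alpha,\beta)$ replaced by $(-\beta,-\alpha)$.

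\emph{Step 2 (diagonal).} Next I multiply by $(a/b)^{-it}w(t)$ and interchange sum and integral, which is legitimate because of the exponential decay of $V_t$ in $mn$ inherited from $G(z)$. The first piece becomes
\begin{equation*}
\sum_{m,n}\frac{1}{m^{\thalf+\alpha}n^{\thalf+\beta}}\int w(t)V_t(mn)(bm/an)^{-it}\,dt,
\end{equation*}
plus its dual analogue. Restricting to $am=bn$ and parametrising in the usual way (writing $(a,b)=d$, $a=da'$, $b=db'$, so that $m=b'k$ and $n=a'k$ for $k\ge 1$) the oscillatory factor disappears and one recovers exactly the first main term of the proposition. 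The dual sum produces the second main term, with the $O(1/t)$ slack from Stirling being absorbed into the $O(T^{-1/2})$ error.

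\emph{Step 3 (off-diagonal).} For $am\ne bn$ I would integrate by parts in $t$, using $w^{(j)}\ll\Delta^{-j}$ and the slow $t$-dependence of $V_t(mn)$ (from the Mellin representation, $\partial_t V_t(mn)\ll (T/mn)^\varepsilon/T$). Each integration by parts gains a factor $(\Delta\log(bm/an))^{-1}$, so only pairs with $|am-bn|\lesssim\max(am,bn)\Delta^{-1+\varepsilon}$ survive. For this close-pair range I would apply Voronoi summation to the $n$-sum, producing Kloosterman sums of modulus $ab$, and then invoke Weil's bound; the hypothesis $ab\le T^{1-\varepsilon}$ is exactly what makes the resulting bound fall below $T^{-1/2}$. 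This is in effect the argument carried out in \cite{BCHB}.

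The main obstacle is Step 3. Integration by parts alone only yields savings in $\Delta=T/L$, which is far too weak in view of the AFE length $mn\lesssim T^{2+\varepsilon}$; genuine arithmetic input via Voronoi and Weil is unavoidable for the claimed $O(T^{-1/2})$. This is precisely why the authors note that the proposition is only \emph{essentially} implicit in \cite{BCHB}: the particular weight $V_t$ and pole-killing factor $p(z)$ need to be propagated through the Voronoi--Kloosterman analysis, and this bookkeeping is not explicitly carried out in the cited reference.
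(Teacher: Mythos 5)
The paper states this Proposition \emph{without proof}, but the Remark immediately following it already describes the intended argument: with $ab\le T^{1-\varepsilon}$, ``only the diagonal terms contribute to the main term (one easily bounds the off-diagonal terms by repeated integration by parts).'' Your Steps 1--2 are a reasonable sketch of the AFE derivation and diagonal extraction, consistent with this. Step 3, however, contains a genuine conceptual error: you assert that ``genuine arithmetic input via Voronoi and Weil is unavoidable for the claimed $O(T^{-1/2})$,'' which is the opposite of the truth here.

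The root of the confusion is your claimed AFE length. For the product $\zeta(\tfrac12+\alpha+it)\,\zeta(\tfrac12+\beta-it)$ --- note the \emph{opposite} signs on $it$ --- the analytic conductor is $\asymp t^2$, so the AFE sum is effectively supported on $mn\ll t^{1+\varepsilon}$, not $t^{2+\varepsilon}$; indeed, shifting the $z$-contour in the definition of $V_t$ far to the right gives $V_t(x)\ll_A (t/x)^A$ for every $A$. You appear to be recalling the fourth-moment situation, where the AFE length is indeed $\asymp t^2$ and Kloosterman/Weil input really is required (cf.\ \cite{HY}). With the correct length, integration by parts alone kills the off-diagonal. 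If $am\ne bn$ and $mn\le T^{1+\varepsilon/2}$, then $(am)(bn)=ab\cdot mn\le T^{2-\varepsilon/2}$, so $\min(am,bn)\le T^{1-\varepsilon/4}$; a short case split (if $\max(am,bn)>T^{1-\varepsilon/8}$ then the ratio $am/bn$ or its reciprocal exceeds $T^{\varepsilon/8}$, so $|\log(am/bn)|\gg\log T$; otherwise $\max(am,bn)\le T^{1-\varepsilon/8}$ and $|\log(am/bn)|\gg|am-bn|/\max(am,bn)\ge T^{-1+\varepsilon/8}$) shows $\Delta\,|\log(am/bn)|\gg T^{\varepsilon/8}/L$ in all cases. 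Repeated integration by parts against the phase $(am/bn)^{-it}$, using $w^{(j)}\ll\Delta^{-j}$ and $\partial_t^j V_t(x)\ll_j t^{-j}T^{\varepsilon}$, then yields an arbitrary polynomial saving, and summing the polynomially many off-diagonal terms gives a negligible total. Voronoi summation and Weil's bound are what one needs to push \emph{past} the threshold $ab\approx T$ (as in \cite{Conrey25} to reach length $T^{4/7}$); under the hypothesis $ab\le T^{1-\varepsilon}$ they are irrelevant.

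One smaller point: your explanation of the factor $p(z)$ is misplaced. The residues at $z=\tfrac12-\alpha-it$ and $z=\tfrac12+\beta-it$ are already negligible for $|t|\gg1$ simply because $|e^{z^2}|=e^{\Re(z^2)}$ decays like $e^{-t^2}$ there; $p(z)$ plays no role in that. Rather, $p(z)$ is chosen so that $G(z)$ vanishes at $2z=\pm(\alpha+\beta)$, which cancels the pole of $\zeta(1+\alpha+\beta+2z)$ appearing later in the evaluation of $I_2$ (see \eqref{eq:B} and the paragraph preceding \eqref{eq:I2'}), exactly as the paper's own Remark after the Proposition states.
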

\noindent {\bf Remarks.}  $G(z)$ can be chosen from a wide class of functions; we made this choice since it has rapid decay and vanishes at $2z = \pm (\alpha + \beta)$.  This result is ``easy'' in the sense that with $ab \leq T^{1-\varepsilon}$ only the diagonal terms contribute to the main term (one easily bounds the off-diagonal terms by repeated integration by parts).  Also note $\leg{t}{2\pi}^{-\alpha-\beta} = T^{-\alpha-\beta}(1+ O(L^{-1}))$ for $t \asymp T$, which is implied by the support of $w$.

\subsection{Exercises with Euler-Maclaurin}
%In calculating the constants $c_{\star}$ w
We need to evaluate various sums to which we apply the Euler-Maclaurin formula.  We collect these formulas here.

\begin{mylemma}
 \label{lemma:EulerMac}
Suppose $l$ is a nonnegative integer, $x \geq 1$ is real, and $s$ is a complex number with $|s| \leq (\log{x})^{-1}$. Then
\begin{equation}
\sum_{n \leq x} \frac{1}{n^{1 + s}}(\log (x/n))^{l} = (\log{x})^{l+1} x^{-s} \int_0^{1} x^{s a} a^l da + O((\log{3x})^l).
\end{equation}
\end{mylemma}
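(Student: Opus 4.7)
The plan is to apply the Euler--Maclaurin summation formula to the function $f(t) = t^{-1-s}(\log(x/t))^l$ on the interval $[1,N]$ with $N = \lfloor x \rfloor$. The standard form
\begin{equation*}
\sum_{n=1}^{N} f(n) = \int_1^N f(t)\,dt + \tfrac{1}{2}\bigl(f(1)+f(N)\bigr) + \int_1^N B_1(\{t\})\,f'(t)\,dt
\end{equation*}
exhibits the integral $\int_1^N f(t)\,dt$ as the main term, and I would first upgrade this to an integral on $[1,x]$: on $[N,x]$ one has $\log(x/t) \leq \log(1 + 1/N) \ll 1/N$ and $t \asymp x$, so the extra portion is of size $O((\log 3x)^l / x) = O(1)$, which is absorbed into the error.

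The second step is to massage $\int_1^x t^{-1-s}(\log(x/t))^l\,dt$ into the proposed main term via the change of variable $t = x^{1-a}$, equivalently $a = \log(x/t)/\log x$, so that $\log(x/t) = a\log x$ and $dt/t = -(\log x)\,da$. This transforms the integral cleanly into
\begin{equation*}
(\log x)^{l+1}\,x^{-s}\int_0^1 x^{sa} a^l\,da,
\end{equation*}
matching the statement exactly.

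For the error control, I would bound $f(1) = (\log x)^l = O((\log 3x)^l)$ directly, note $f(N) \ll N^{-1}$, and estimate the remainder integral by bounding $|f'(t)|$. Computing
\begin{equation*}
f'(t) = -(1+s)\,t^{-2-s}(\log(x/t))^l - l\,t^{-2-s}(\log(x/t))^{l-1},
\end{equation*}
and using $|s| \leq (\log x)^{-1}$ (so that $t^{-\mathrm{Re}(s)}$ is uniformly bounded for $1 \leq t \leq x$), the task reduces to estimating $\int_1^x t^{-2}(\log(x/t))^k\,dt$ for $k = l, l-1$. The substitution $u = \log(x/t)$ turns this into $x^{-1}\int_0^{\log x} e^u u^k\,du$, and integration by parts shows this is $\ll (\log x)^k$. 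Hence the remainder integral contributes $O((\log 3x)^l)$, completing the error bound.

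The main point requiring any care is verifying that the bound $|s| \leq (\log x)^{-1}$ keeps factors like $x^{-s}$ and $x^{\pm s a}$ uniformly bounded throughout, so that the substitution step produces exactly the stated main term rather than an additional $(1+O(s\log x))$ factor that would need to be absorbed. Everything else is bookkeeping; the case $1 \leq x \leq e$, say, is handled trivially because the sum then has at most two terms and the right-hand side reduces to $O((\log 3x)^l)$ anyway, which is why the factor of $3$ appears inside the logarithm in the error.
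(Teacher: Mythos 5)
Your proof is correct and follows essentially the same route as the paper: apply Euler--Maclaurin to reduce the sum to $\int_1^x t^{-1-s}(\log(x/t))^l\,dt$ plus an $O((\log 3x)^l)$ error, then substitute $t = x^{1-a}$. The paper states these two steps without elaboration; you have simply filled in the bookkeeping for the error term, which checks out.
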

\begin{proof}[Proof of Lemma \ref{lemma:EulerMac}]
A simple application of the Euler-Maclaurin formula gives
\begin{equation*}
 \sum_{n \leq x} \frac{1}{n^{1 + s}}(\log (x/n))^{l} = \int_1^{x} t^{-1-s} (\log (x/t))^{l} dt + O((\log{3x})^l).
\end{equation*}
We make the change of variables $t = x^{1-a}$ to obtain the desired expression.
%Changing variables $t = x^{1-a}$ finishes the proof.
\end{proof}

\begin{mylemma}
 \label{lemma:EulerMacCross}
Suppose $z \leq x$, $|s| \leq (\log{x})^{-1}$, $k$ is a positive integer, and that $F$ and $H$ are (fixed) smooth functions.  Then
\begin{multline}
\label{eq:dkFH}
 \sum_{n \leq z} \frac{d_k(n)}{n^{1+s}} F\leg{\log(x/n)}{\log{x}} H\leg{\log(z/n)}{\log{z}} 
\\
= \frac{(\log z)^k}{(k-1)!} z^{-s} \int_0^{1} (1-u)^{k-1} F\left(1-\frac{(1-u) \log z}{\log{x}}\right) H(u) z^{us} du + O((\log{3z})^{k-1}).
\end{multline}
\end{mylemma}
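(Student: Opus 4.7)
The plan is to prove the identity by partial summation, using the standard Dirichlet divisor asymptotic
\[
D_k(t) := \sum_{n \leq t} d_k(n) = t\, P_k(\log t) + O(t^{1-1/k+\varepsilon}),
\]
where $P_k$ is a polynomial of degree $k-1$ with leading coefficient $1/(k-1)!$. Set $G(t) = F\!\left(\frac{\log(x/t)}{\log x}\right) H\!\left(\frac{\log(z/t)}{\log z}\right)$; this is smooth on $[1,z]$ with $G(t) \ll 1$ and $G'(t) \ll 1/t$. Partial summation then gives
\[
S := \sum_{n\leq z} \frac{d_k(n) G(n)}{n^{1+s}} = \frac{D_k(z) G(z)}{z^{1+s}} - \int_1^z D_k(t)\,\frac{d}{dt}\!\left(\frac{G(t)}{t^{1+s}}\right) dt.
\]

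I then substitute the asymptotic for $D_k$. Differentiating $G(t)/t^{1+s}$ produces an $O(t^{-2-\mathrm{Re}(s)})$ kernel, against which the remainder $O(t^{1-1/k+\varepsilon})$ integrates to $O(1)$. The main piece $tP_k(\log t)$ can be recombined with the boundary term by integration by parts in the opposite direction, yielding
\[
S = \int_1^z \frac{G(t)}{t^{1+s}}\,\bigl(P_k(\log t) + P_k'(\log t)\bigr) dt + O\bigl((\log 3z)^{k-1}\bigr).
\]
The key bound here is $|z^s| \ll 1$, which follows from $|s| \leq 1/\log x$ and $z \leq x$, and is used to absorb all boundary contributions into the error.

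Only the top-degree monomial $\frac{(\log t)^{k-1}}{(k-1)!}$ of $P_k(\log t) + P_k'(\log t)$ contributes to the main term; each lower-degree monomial $(\log t)^j$ with $j \leq k-2$ integrates against $G(t)/t^{1+s}$ to at most $O((\log z)^{j+1}) = O((\log 3z)^{k-1})$, again using $|z^s| \ll 1$. This leaves
\[
S = \frac{1}{(k-1)!}\int_1^z (\log t)^{k-1}\,\frac{G(t)}{t^{1+s}}\,dt + O\bigl((\log 3z)^{k-1}\bigr).
\]

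Finally, the substitution $t = z^{1-u}$, $dt = -z^{1-u}(\log z)\,du$, transforms the main integral into
\[
\frac{(\log z)^{k}}{(k-1)!}\,z^{-s}\int_0^1 (1-u)^{k-1} z^{us}\,G(z^{1-u})\,du,
\]
and since $G(z^{1-u}) = F\bigl(1 - (1-u)\log z/\log x\bigr) H(u)$, this is exactly \eqref{eq:dkFH}. The main obstacle is simply the careful bookkeeping of error terms: once one verifies that $|z^s|$ is uniformly bounded under $|s| \leq (\log x)^{-1}$, the boundary contributions from partial summation and the subleading monomials of $P_k$ all collapse into the allowed $O((\log 3z)^{k-1})$ error.
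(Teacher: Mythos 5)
Your proof is correct, but it takes a genuinely different route from the paper's. The paper proceeds by induction on $k$: it writes $d_k = d_{k-1} * 1$, applies the inductive hypothesis to the inner sum, then applies a simple Euler--Maclaurin estimate (the paper's Lemma~\ref{lemma:EulerMac}) to the outer sum, and finally performs several changes of variables ($v\to 1-v$, $v\to v/u$, $u\to u+v$) and integrates out one variable to collapse the resulting double integral to the single integral in \eqref{eq:dkFH}. You instead do a single partial summation against the classical asymptotic $D_k(t) = t P_k(\log t) + O(t^{1-1/k+\varepsilon})$, with $P_k$ of degree $k-1$ and leading coefficient $1/(k-1)!$; integrating by parts in the opposite direction to cancel the boundary term, discarding the sub-leading monomials of $P_k(\log t)+P_k'(\log t)$ into the error, and substituting $t=z^{1-u}$ lands on the claimed formula directly. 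The trade-off is that your argument imports the (standard but nontrivial) divisor summatory asymptotic, whereas the paper is self-contained given only Euler--Maclaurin, at the cost of the more elaborate bookkeeping in the inductive step. Both yield the same error $O((\log 3z)^{k-1})$, and in both the observation that $|z^{s}|,|z^{us}|\ll 1$ for $|s|\leq(\log x)^{-1}$, $z\leq x$, $0\leq u\leq 1$ is what lets all boundary and lower-order contributions be absorbed. One minor point worth making explicit in your write-up: the contribution of the $O(t^{1-1/k+\varepsilon})$ remainder against the kernel $O(t^{-2-\operatorname{Re}(s)})$ is $O(1)$ only when $\operatorname{Re}(s) > -1/k+\varepsilon$; this holds automatically once $\log x \geq 2k$, and for smaller $x$ (hence smaller $z$) both sides of \eqref{eq:dkFH} are $O_k(1)$ so the statement is trivial, but the case split should be noted.
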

\noindent {\bf Remark.}  By repeatedly using Euler-Maclaurin we can express this as a $k$-fold integral.  It turns out that this multiple integral can be computed explicitly enough to reduce to a single integral as above.

\begin{proof}
We proceed by induction.  For $k=1$, a minor variation of the proof of Lemma \ref{lemma:EulerMac} gives that \eqref{eq:dkFH} is
\begin{equation*}
 %\int_1^{z} t^{-1-s} F\leg{\log{x/t}}{\log{x}} H\leg{\log{z/t}}{\log{z}} dt + O(1).
(\log{z}) z^{-s} \int_0^{1}  F\Big(\frac{\log{x} - (1-u) \log z}{\log{x}}\Big) H(u) z^{us} du + O(1),
\end{equation*}
as desired.  Now suppose $k \geq 2$.  Write the left hand side of \eqref{eq:dkFH} as
\begin{equation}
\label{eq:dkFHstep1}
 \sum_{n \leq z} \frac{1}{n^{1+s}} \sum_{m \leq n^{-1} z} \frac{d_{k-1}(m)}{m^{1+s}} F\Big(\frac{\log(x/n)}{\log{x}} \frac{\log(n^{-1}x/m)}{\log(n^{-1}x)} \Big) H\Big(\frac{\log(z/n)}{\log{z}} \frac{\log(n^{-1}z/m)}{\log{n^{-1}z}} \Big).
\end{equation}
By applying the induction hypothesis to the inner sum over $m$ we get that \eqref{eq:dkFHstep1} is
\begin{multline}
\label{eq:dkFHstep2}
 \int_0^{1} z^{us-s} \frac{(1-u)^{k-2}}{(k-2)!} \sum_{n \leq z} \frac{(\log \frac{z}{n})^{k-1}}{n^{1+us}}  F\Big(\frac{\log(x/n)}{\log{x}} \Big( 1-\frac{(1-v) \log (z/n)}{\log(x/n)}\Big)\Big) H\Big(\frac{\log(z/n)}{\log{z}}v\Big)  dv
\\
+O((\log{3z})^{k-1}).
\end{multline}
We again apply a minor variation on Lemma \ref{lemma:EulerMac}, this time to the sum over $n$ which gives that \eqref{eq:dkFHstep2} is
\begin{multline}
z^{-s} (\log{z})^k \int_0^{1} \frac{(1-v)^{k-2}}{(k-2)!} \int_0^{1}z^{-usv} u^{k-1} F\Big(1-(1-u)\frac{\log{z}}{\log{x}} - \frac{(1-v)u\log{z}}{\log{x}} \Big) H(u v )  du dv
\\
+O((\log{3z})^{k-1}).
\end{multline}
Now we perform some elaborate changes of variables: first $v \rightarrow 1-v$, followed by $v \rightarrow v/u$, and finally $u \rightarrow u+v$ to give
\begin{equation*}
z^{-s} \frac{(\log{z})^k}{(k-2)!} \mathop{\int \int}_{\substack{u + v \leq 1 \\ 0 \leq u, v}} v^{k-2}  z^{us}  F\Big(1-\frac{\log{z}}{\log{x}} + \frac{u\log{z}}{\log{x}} \Big) H(u)  du dv
+O((\log{3z})^{k-1}).
\end{equation*}
The integral over $v$ can now be calculated to finish the proof.
\end{proof}
In the special case $z=x$ we obtain
\begin{mycoro}
\label{coro:EulerMacdiag}
 Let assumptions be as in Lemma \ref{lemma:EulerMacCross}.  Then
\begin{multline}
\label{eq:dkFHz=x}
 \sum_{n \leq x} \frac{d_k(n)}{n^{1+s}} F\Big( \frac{\log(x/n)}{\log{x}} \Big) H\Big(\frac{\log(x/n)}{\log{x}} \Big)
\\
= \frac{(\log x)^k}{(k-1)!} x^{-s} \int_0^{1} (1-u)^{k-1} F(u) H(u) x^{us} du + O((\log{3x})^{k-1}).
\end{multline}
\end{mycoro}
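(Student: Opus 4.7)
The plan is to deduce this corollary as the direct specialization of Lemma \ref{lemma:EulerMacCross} to the case $z = x$, with no new analytic input required.

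First, I would observe that setting $z = x$ in the left-hand side of Lemma \ref{lemma:EulerMacCross} makes the two ratios appearing in the arguments of $F$ and $H$ coincide, since $\log(z/n)/\log z = \log(x/n)/\log x$ when $z = x$, and the range of summation $n \leq z$ becomes $n \leq x$. This reproduces exactly the left-hand side of Corollary \ref{coro:EulerMacdiag}.

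Next, I would perform the same substitution on the right-hand side. The prefactor $(\log z)^k z^{-s}/(k-1)!$ becomes $(\log x)^k x^{-s}/(k-1)!$, the factor $z^{us}$ becomes $x^{us}$, and the error term $O((\log 3z)^{k-1})$ becomes $O((\log 3x)^{k-1})$. The one step with any real content is the argument of $F$: setting $z = x$ gives
\begin{equation*}
1 - \frac{(1-u)\log z}{\log x} = 1 - (1-u) = u,
\end{equation*}
so $F\bigl(1-(1-u)\log z/\log x\bigr)$ collapses to $F(u)$, and the integrand then groups as $F(u)H(u)$. Assembling these substitutions yields precisely the identity \eqref{eq:dkFHz=x}.

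There is no genuine obstacle here, since this is a one-line specialization of the preceding lemma. The only substantive observation is the pleasant collapse of the affine argument of $F$ down to $u$ when $z = x$; all of the real work — the induction on $k$, the changes of variables $v \to 1-v$, $v \to v/u$, $u \to u+v$, and the applications of Euler--Maclaurin — was already carried out in the proof of Lemma \ref{lemma:EulerMacCross}.
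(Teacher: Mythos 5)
Your proposal is exactly the paper's intended argument: the corollary is introduced with the phrase ``In the special case $z=x$ we obtain,'' and the content is precisely the substitution $z = x$ into Lemma \ref{lemma:EulerMacCross}, with the argument of $F$ collapsing to $u$ as you note. Correct, and the same approach.
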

We finally need one other result in this circle of sums.  This result is important for us because it saves one $\log$
when $|\sigma|$ is taken to be a fixed positive constant.
\begin{mylemma}
\label{lemma:logsave}
Suppose $-1 \leq \sigma \leq 0$.  Then
\begin{equation}
\label{eq:dkUB}
\sum_{n \leq x} \frac{d_k(n)}{n} \leg{x}{n}^{\sigma}  \ll_k (\log{3x})^{k-1} \min(|\sigma|^{-1}, \log{3x}).
\end{equation}
\end{mylemma}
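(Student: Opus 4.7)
The plan is to exploit a dyadic decomposition of the range $n \leq x$ together with the classical easy bound $\sum_{n \leq y} d_k(n) \ll_k y (\log 3y)^{k-1}$. The key observation is that, since $\sigma \leq 0$, the factor $(x/n)^\sigma$ provides geometric decay in the dyadic parameter, so that only $O(\min(|\sigma|^{-1}, \log 3x))$ scales contribute at full strength; this is precisely where the saving of one logarithm (relative to the trivial bound $(\log 3x)^k$) comes from.

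First I would write $\sum_{n \leq x} = \sum_{j=0}^{J} \sum_{n \in I_j}$ with $I_j = (x 2^{-j-1}, x 2^{-j}]$ and $J = \lfloor \log_2 x \rfloor$. On $I_j$ one has $(x/n)^\sigma = (n/x)^{|\sigma|} \leq 2^{-j|\sigma|}$, and, setting $y_j = x 2^{-j}$, the easy divisor estimate gives
$$\sum_{n \in I_j} \frac{d_k(n)}{n} \leq \frac{2}{y_j}\sum_{n \leq y_j} d_k(n) \ll_k (\log 3 y_j)^{k-1} \leq (\log 3x)^{k-1}.$$

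Multiplying and summing over $j$ then yields $\sum_{n \leq x} (d_k(n)/n)(x/n)^\sigma \ll_k (\log 3x)^{k-1} \sum_{j=0}^{J} 2^{-j|\sigma|}$. The geometric sum is trivially at most $J + 1 \ll \log 3x$, and for $|\sigma| > 0$ is also at most $(1 - 2^{-|\sigma|})^{-1} \ll |\sigma|^{-1}$ using $1 - 2^{-u} \gg u$ uniformly for $0 \leq u \leq 1$. Hence $\sum_{j=0}^{J} 2^{-j|\sigma|} \ll \min(|\sigma|^{-1}, \log 3x)$, which gives \eqref{eq:dkUB}.

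I do not anticipate any serious obstacle: every step is routine. The only mildly delicate point is to verify that the geometric-series estimate behaves correctly both when $|\sigma| \log x \ll 1$ (where the sum is $\asymp \log x$) and when $|\sigma| \log x \gg 1$ (where it saturates at $\asymp |\sigma|^{-1}$). It is worth noting that the saving of a logarithm arises not from any refined asymptotic for $\sum_{n \leq y} d_k(n)/n$, but simply from restricting attention to a single dyadic scale, where the available bound is one power of log weaker than on the full range $[1,x]$.
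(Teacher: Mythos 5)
Your proof is correct, and it takes a genuinely different route from the paper's. The paper proceeds by induction on $k$: the base case is the elementary comparison $\sum_{m \leq y} m^{-1-\sigma} \leq 1 + |\sigma|^{-1} y^{-\sigma}$, which is then fed into the convolution $d_k = 1 * d_{k-1}$ to yield a two-term recursive inequality, and the induction hypothesis closes the argument (with the $(\log 3x)^k$ branch of the minimum coming from the $\sigma = 0$ case already established via Corollary~\ref{coro:EulerMacdiag}). You instead decompose $[1,x]$ dyadically, bound each block $I_j$ uniformly by $(\log 3x)^{k-1}$ via the classical estimate $\sum_{n \leq y} d_k(n) \ll_k y(\log 3y)^{k-1}$, and isolate the factor $(x/n)^\sigma \leq 2^{-j|\sigma|}$, so that the whole problem collapses to summing the geometric series $\sum_j 2^{-j|\sigma|} \ll \min(|\sigma|^{-1}, \log 3x)$. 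Both are elementary and short. The paper's inductive route is self-contained within the machinery it has already set up and avoids invoking the divisor-sum bound as an external input, while your dyadic argument is more conceptually transparent: it makes plain that the saved logarithm comes from the geometric decay truncating the number of dyadic scales that contribute at full strength, and it handles the two regimes $|\sigma| \log x \ll 1$ and $|\sigma| \log x \gg 1$ symmetrically in one stroke.
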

\begin{proof}
We use an elementary approach with induction.  The case $\sigma = 0$ is implied by Corollary \ref{coro:EulerMacdiag}, so suppose $\sigma < 0$.  Note that
\begin{equation}
\label{eq:randomeq}
\sum_{m \leq y} \frac{1}{m^{1 + \sigma}} \leq 1 + \int_1^{y} t^{-1-\sigma} dt \leq 1 +  |\sigma|^{-1}y^{-\sigma}.
\end{equation}
This proves \eqref{eq:dkUB} for $k=1$.  Suppose $k \geq 2$.  Then using \eqref{eq:randomeq},
\begin{equation*}
\sum_{n \leq x} \frac{d_k(n)}{n} \leg{x}{n}^{\sigma} \leq x^{\sigma} \sum_{n \leq x} \frac{d_{k-1}(n)}{n^{1+\sigma}} \Big(1 + |\sigma|^{-1} \Big(\frac{x}{n}\Big)^{-\sigma} \Big).
\end{equation*}
Now apply the induction hypothesis %and Corollary \ref{coro:dks=0} 
to finish the proof.
\end{proof}

\subsection{A Mellin pair}
Recall $P_1[n]$ is given by \eqref{eq:P1def}.  Then for $n \leq y_1$, we have
\begin{equation}
\label{eq:P1Mellin}
 P_1[n] = \sum_{i} \frac{a_i}{(\log y_1)^i} (\log y_1/n)^i = \sum_{i} \frac{a_i i!}{(\log y_1)^i} \frac{1}{2 \pi i} \int_{(1)} \leg{y_1}{n}^s \frac{ds}{s^{i+1}}.
\end{equation}
For $n > y_1$, the right hand side vanishes and therefore agrees with $P_1[n]$ in this case also.

\section{The cross term}
\label{section:cross}
In this section we prove Lemma \ref{lemma:mainterm12}. 
\subsection{Averaging over $t$}
As a first step, we show
\begin{mylemma}
\label{lemma:I12intermediate}
Suppose $\theta_1, \theta_2$ satisfy
\begin{equation}
\label{eq:thetabounds}
\frac52 \theta_1 + \theta_2 < 2, \qquad \frac53 \theta_1 + \frac23 \theta_2 < 4/3, \qquad \theta_2 < \theta_1.
\end{equation}
 For any $B \geq 0$ we have, uniformly for $\alpha, \beta \ll L^{-1},$
\begin{equation}
\label{eq:I12intermediate}
 I_{12}(\alpha, \beta) =  \sum_{\substack{n \leq y_1; \; hk \leq y_2 \\ hl = nk} }  \frac{\mu(n)\mu_2(h) \sigma_{\alpha, -\beta}(l) }{(hkln)^{\half}} P_1[n] P_2[hk] e^{-l/T^3} \intR \Big(\frac{t}{2\pi} \Big)^{-\beta} w(t) dt + O_B(T/L^B).
\end{equation}
\end{mylemma}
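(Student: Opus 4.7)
The plan is to expand the Dirichlet polynomials, absorb the $\chi$-factor hidden in $\psi_2$ using the functional equation, apply the long approximate functional equation of Lemma \ref{lemma:longAFE} to the remaining $\zeta$-product, and finally separate the diagonal from the off-diagonal contribution. A direct computation using the fact that $\chi(s+\tfrac12-\sigma_0)\big|_{s=\sigma_0+it} = \chi(\tfrac12+it)$ gives, after all $\sigma_0$-dependence cancels,
\[
\overline{\psi_1}\psi_2(\sigma_0+it) = \chi(\tfrac12+it) \sum_{\substack{n \leq y_1 \\ hk \leq y_2}} \frac{\mu(n)\mu_2(h) P_1[n] P_2[hk]}{\sqrt{nhk}} \lp\frac{nk}{h}\rp^{it}.
\]

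Next, the functional equation $\zeta(\tfrac12+\beta-it) = \chi(\tfrac12+\beta-it)\zeta(\tfrac12-\beta+it)$ together with the Stirling estimate $\chi(\tfrac12+it)\chi(\tfrac12+\beta-it) = (t/2\pi)^{-\beta}(1+O(t^{-1}))$, valid uniformly for $|t|\asymp T$ and $\beta\ll L^{-1}$, converts $I_{12}(\alpha,\beta)$ into
\[
\intR w(t) \lp\frac{t}{2\pi}\rp^{-\beta} \zeta(\tfrac12+\alpha+it)\zeta(\tfrac12-\beta+it) D(t)\, dt + O(T^{1-\varepsilon}),
\]
where $D(t)$ is the Dirichlet polynomial above. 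Inserting the expansion of $\zeta(\tfrac12+\alpha+it)\zeta(\tfrac12-\beta+it)$ from Lemma \ref{lemma:longAFE} (applicable on the support of $w$) and interchanging the order of summation and integration yields
\[
I_{12}(\alpha,\beta) = \sum_{\substack{n \leq y_1 \\ hk \leq y_2 \\ l \geq 1}} \frac{\mu(n)\mu_2(h) P_1[n] P_2[hk] \sigma_{\alpha,-\beta}(l) e^{-l/T^3}}{\sqrt{nhkl}} \intR w(t) \lp\frac{t}{2\pi}\rp^{-\beta} \lp\frac{nk}{hl}\rp^{it}\! dt
\]
up to acceptable errors. The diagonal $nk = hl$ produces exactly the main term claimed in \eqref{eq:I12intermediate}, so the remainder of the work lies in the off-diagonal analysis.

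Writing $r = \log(nk/hl)$ and integrating by parts $B$ times against $w$, the bounds $w^{(j)} \ll \Delta^{-j}$ with $\Delta = T/L$ show that any quadruple $(n,h,k,l)$ with $nk \neq hl$ contributes at most $O_B\!\lp T(L/(T|r|))^B\rp$. Since $|r| \gg |nk - hl|/\max(nk,hl)$, this is already $O_B(T/L^B)$ unless $|nk - hl|$ is tiny relative to $hl$, say $|nk - hl| \ll hl L^{1+\eta}/T$. The main obstacle will be bounding this near-diagonal mass: for this I would dyadically split on the sizes of $n, h, k, l$, and in each dyadic range fix three of the four variables and count lattice points $(n,k)$ (or $(h,l)$) in the corresponding short window around the hyperbola $nk = hl$, using divisor-type bounds for $\sigma_{\alpha,-\beta}(l)$ together with the Gaussian truncation $e^{-l/T^3}$. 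The three inequalities of \eqref{eq:thetabounds} should emerge as the conditions under which each dyadic range admits a trivial estimate of acceptable size: the constraint $\theta_2 < \theta_1$ is structural (it ensures $\psi_2$ is shorter than $\psi_1$ so that $D(t)$ has effective length at most $y_1 y_2$), while $\tfrac52\theta_1+\theta_2 < 2$ and $\tfrac53\theta_1 + \tfrac23\theta_2 < 4/3$ are precisely the balance conditions that emerge from optimizing the counting bounds against the length $T^{3+o(1)}$ of the $l$-sum in two distinct near-diagonal regimes.
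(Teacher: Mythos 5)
Your setup is correct and matches the paper's: expand $\psi_1,\psi_2$, use the functional equation together with $\chi(\tfrac12+it)\chi(\tfrac12+\beta-it)=(t/2\pi)^{-\beta}(1+O(t^{-1}))$ to absorb the $\chi$-factor, apply Lemma~\ref{lemma:longAFE}, and isolate the diagonal $hl=nk$. You also correctly note that $\widehat{w_0}(x)\ll_B T(1+\Delta|x|)^{-B}$ annihilates the terms with $|hl-nk|\gg hl/\Delta^{1-\eta}$, so only a near-diagonal range survives.

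The gap is in your treatment of that near-diagonal range. Your plan (dyadic decomposition, then with three variables fixed count lattice points near the hyperbola $nk=hl$ using divisor bounds) is precisely the trivial estimate, and it is not strong enough. Writing $f=hl-nk$, for fixed $n,k$ there are $\ll\Delta^{-1+\varepsilon}nk$ admissible $f$, for fixed $n,k,f$ the number of pairs $(h,l)$ is $\ll T^{\varepsilon}$, and $\sqrt{hkln}\asymp nk$ on the near-diagonal, so this route gives
\begin{equation*}
  C' \ll T^{1+\varepsilon}\sum_{n\le y_1}\sum_{k\le y_2}\frac{1}{nk}\cdot\Delta^{-1+\varepsilon}nk \ll T^{\varepsilon}y_1y_2 = T^{\theta_1+\theta_2+\varepsilon},
\end{equation*}
which is exactly the bound \eqref{eq:C'trivialbound} in the paper. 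In the regime the paper ultimately uses, $\theta_1=4/7-\varepsilon$ and $\theta_2=1/2-\varepsilon$, one has $\theta_1+\theta_2\approx 15/14>1$, so this trivial bound exceeds $T$ itself; no amount of dyadic bookkeeping repairs an estimate that is already wrong by a power of $T$. In particular, the three inequalities of \eqref{eq:thetabounds} are \emph{not} the conditions under which ``each dyadic range admits a trivial estimate of acceptable size,'' as you assert.

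What is actually required is cancellation in the off-diagonal sum, and this is the content of Lemmas~\ref{lemma:Voronoi}--\ref{lemma:S1}. The paper eliminates $k$ via the congruence $hl\equiv f\pmod n$ and applies the Voronoi summation formula to $\sum_{l\le t,\ l\equiv a\ (\mathrm{mod}\ n_1)}d(l)$; it is the resulting Voronoi error term $D_E\ll T^{\varepsilon}\big(\sqrt{n_1}h/|f|+T^{1/3}h^{2/3}/|f|^{2/3}\big)(1+|f|\Delta h/ny_2)^{-B}$ that produces the two nontrivial constraints $\tfrac52\theta_1+\theta_2<2$ and $\tfrac53\theta_1+\tfrac23\theta_2<4/3$. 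The surviving Voronoi main term $C_0$ is then shown to be $O(T/L^B)$ by extracting cancellation in the $f$-sum via Poisson summation (Lemma~\ref{lemma:S2}) and, crucially, in the $\mu(n)$-sum via a prime-number-theorem argument (Lemma~\ref{lemma:S1}). This cancellation mechanism is the heart of the off-diagonal analysis, and it is missing from your proposal. (A minor further point: $\theta_2<\theta_1$ is not what bounds the effective length of $D(t)$, which is $\le y_1y_2$ in any case; that hypothesis is used later in the evaluation of $K_1$, where one needs $\log(y_1/n)\ge(\theta_1-\theta_2)L$ for $n\le y_2$.)
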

\noindent {\bf Remark.}  When $\theta_1 = 4/7$ then \eqref{eq:thetabounds} translates to $\theta_2 < 4/7$.
\begin{proof}[Proof of Lemma \ref{lemma:I12intermediate}]
Inserting the definition of $\psi_1$ and $\psi_2$, we have
\begin{equation}
 I_{12}(\alpha,\beta) = \sum_{n \leq y_1} \sum_{hk \leq y_2} \frac{\mu(n)\mu_2(h) P_1[n] P_2[hk]}{(hkn)^{\half}} J_{12},
\end{equation}
where
\begin{equation}
 J_{12} = \intR w(t) \Big( \frac{h}{nk} \Big)^{-it} \chi(\thalf+it) \zeta(\thalf + \alpha + it) \zeta(\thalf + \beta -it)  dt.
\end{equation}
From the functional equation of $\zeta(\half + \beta -it)$ and the approximation
\begin{equation}
 \chi(\thalf + \beta -it) \chi(\thalf +it) = \Big(\frac{t}{2\pi} \Big)^{-\beta}(1 + O(t^{-1})),
\end{equation}
we get
\begin{equation}
\label{eq:J12middle}
 J_{12} = \intR w(t) \Big(\frac{t}{2\pi} \Big)^{-\beta} \Big( \frac{h}{nk} \Big)^{-it} \zeta(\thalf + \alpha + it) \zeta(\thalf - \beta +it)  dt + O(T^{\varepsilon}).
\end{equation}
Applying Lemma \ref{lemma:longAFE} to \eqref{eq:J12middle}, we have
\begin{equation}
 J_{12} = \sum_{l} \frac{\sigma_{\alpha, -\beta}(l)}{l^{\half}} e^{-l/T^3} \intR w(t) \Big(\frac{t}{2\pi} \Big)^{-\beta} \Big( \frac{hl}{nk} \Big)^{-it}  dt + O(T^{\varepsilon}).
\end{equation}
The $t$-integral takes the form $\widehat{w_0}(\frac{1}{2 \pi} \log\frac{hl}{nk})$ where $w_0(t) = w(t) \leg{t}{2\pi}^{-\beta}$.  
This explains why we took one long sum in the approximate functional equation above -- the sum over $l$ is automatically truncated by the decay of $\widehat{w_0}$.
The diagonal terms $hl = nk$ give the main term visible in \eqref{eq:I12intermediate}.  To complete the proof, we need to bound the off-diagonal terms (those with $hl \neq nk$).  We accomplish this with Lemma \ref{lemma:Cbound} below. \end{proof}

\subsection{Bounding the off-diagonal terms}
Let $C$ denote the contribution to $I_{12}(\alpha, \beta)$ from the off-diagonal terms, so that
\begin{equation}
\label{eq:Cdef}
 C = \sum_{\substack{n \leq y_1; \; hk \leq y_2 \\ hl \neq  nk} }  \frac{\mu(n)\mu_2(h) \sigma_{\alpha, -\beta}(l) }{(hkln)^{\half}} P_1[n] P_2[hk] e^{-l/T^3} \widehat{w_0}\Big(\frac{1}{2 \pi} \log\frac{hl}{nk}\Big).
\end{equation}
\begin{mylemma}
\label{lemma:Cbound} 
 Suppose $\theta_1, \theta_2$ satisfy \eqref{eq:thetabounds}.
Then for any $B \geq 0$ we have
\begin{equation}
 C \ll_B T/L^B.
\end{equation}
\end{mylemma}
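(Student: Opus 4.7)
My plan. The plan exploits the rapid decay of $\widehat{w_0}$ away from the origin. Since $w_0(t) := w(t)(t/2\pi)^{-\beta}$ has support of length $\asymp T$ and satisfies $w_0^{(j)}(t) \ll_j \Delta^{-j}$ with $\Delta = T/L$, $j$-fold integration by parts in its Fourier integral yields
\[
|\widehat{w_0}(\xi)| \ll_j T (1 + \Delta |\xi|)^{-j}
\]
for every $j \geq 0$. Setting $\xi = \frac{1}{2\pi}\log(hl/nk)$ shows that tuples with $|\log(hl/nk)| \geq L^{1+\varepsilon}/T$ contribute only $O(T^{-A})$ for any $A$ once $j$ is taken large. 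Thus I may restrict attention to the ``near-diagonal'' set $1 \leq |D| := |hl - nk| \leq nk\, L^{1+\varepsilon}/T$; in particular $nk \geq T/L^{1+\varepsilon}$ on this set, and if $\theta_1+\theta_2<1$ the off-diagonal sum is automatically empty.

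For each fixed nonzero $D$ in the allowed range I would then use the Mellin representation \eqref{eq:P1Mellin} of $P_1[n]$, together with an analogous representation of $P_2[hk]$, to replace the sharp cutoffs $n \leq y_1$ and $hk \leq y_2$ with smooth weights. This converts $C$ into a double contour integral over Mellin variables whose integrand is a shifted convolution sum. Writing $\sigma_{\alpha,-\beta}(l) = \sum_{ab=l} a^{-\alpha}b^\beta$ and dyadically decomposing $n \sim N$, $h \sim H$, $k \sim K$, $a \sim A$, $b \sim B$, each dyadic block reduces to counting, with smooth weights, the solutions of $hab - nk = D$. For each such block I would solve the congruence $hab \equiv nk \pmod{\text{modulus}}$ and apply Poisson summation in one of the variables to introduce a dual sum of Kloosterman sums; Weil's bound $|S(m,n;c)| \leq d(c)(m,n,c)^{1/2}c^{1/2}$ then supplies the required cancellation.

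The main obstacle will be assembling the dyadic estimates with sufficient uniformity in $D$ and the five dyadic ranges so that, after summing over $|D| \leq NKL^{1+\varepsilon}/T$ and over all dyadic blocks, the total is $\ll T/L^B$ for arbitrary $B$. This is where the conditions \eqref{eq:thetabounds} are used: they arise from balancing the $\sqrt{h}$-type gain of the Weil bound against the sizes of the dyadic ranges in various worst-case configurations. The constraint $\theta_2 < \theta_1$ ensures the $hk$-sum does not overwhelm the $n$-sum, and the inequality $\tfrac{5}{2}\theta_1 + \theta_2 < 2$ (equivalent to $\tfrac{5}{3}\theta_1 + \tfrac{2}{3}\theta_2 < 4/3$) governs the most delicate regime, where $N$ is close to its maximum $y_1 = T^{\theta_1}$ and the gain from Poisson in the long variable of the decomposition is essentially tight. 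Verifying that each dyadic case closes to the desired bound is the technical heart of the argument.
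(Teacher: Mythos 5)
Your near-diagonal reduction via the decay of $\widehat{w_0}$ (restricting to $|f|=|hl-nk|\ll \Delta^{-1+\varepsilon}hl$) matches the paper, and the intuition that the conditions \eqref{eq:thetabounds} arise from balancing the error in a Voronoi/Poisson step against the dyadic ranges is on the right track. The paper, after fixing $k=(hl-f)/n$ and passing to the congruence $hl\equiv f\pmod n$, applies the Voronoi summation formula for $\sum_{l\le t,\,l\equiv a\,(n_1)}d(l)$, whose error term $(n_1t)^\varepsilon(\sqrt{n_1}+t^{1/3})$ is exactly what produces $\tfrac52\theta_1+\theta_2<2$; the $t^{1/3}$ there does not follow from Weil's bound alone, so your Poisson-plus-Kloosterman route would have to be organized carefully to recover it, but this is a plausible variant.

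The genuine gap is that your plan only treats the oscillatory part. After Voronoi (equivalently, after the $k=0$ Poisson term is separated) there remains a smooth \emph{main term} $D_M$ of size $\asymp T$, and no amount of Kloosterman-sum cancellation touches it. Summed over $n$, $h$, $f$ this would give a contribution of size $\asymp T$, not $T/L^B$. The paper disposes of it by a separate, non-oscillatory argument: the main term $C_0$ factors into a Poisson sum $S_2\ll T$ and a M\"obius sum
\begin{equation*}
S_1=\sum_{n}\mu(n)P_1[n]P_2[h^2t/n]\frac{\phi(n_1)}{n_1^2}(\log t+2\gamma-2\eta(n_1)),
\end{equation*}
and the saving of an arbitrary power of $L$ comes from a prime-number-theorem/zero-free-region argument (Lemma \ref{lemma:S1}) exploiting the oscillation of $\mu(n)$, enabled by the fact that $R=bh_1^2t\ge y_1T^\varepsilon$ so that the contour $\int y_1^s\zeta(1+s)^{-1}F(s)\,ds$ can be shifted left of $\mathrm{Re}\,s=0$ profitably. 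Your proposal never invokes the sign changes of $\mu(n)$, and indeed never explains how an estimate that is purely power-saving in $T$ could yield a bound of the form $T/L^B$ for all $B$; without the M\"obius cancellation argument the desired conclusion cannot be reached.
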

\noindent The proof is fairly lengthy and we shall state and prove various intermediate lemmas within the body of the proof.

\begin{proof}
First note that an easy integration by parts argument shows 
\begin{equation}
\label{eq:w0bound}
\widehat{w_0}(x) \ll_B T (1 + \Delta x)^{-B}, 
\end{equation}
for any $B \geq 0$.  Similarly, for each $j=1, 2, \dots$, we have
\begin{equation}
\label{eq:w0'bound}
\frac{d^j}{dx^j} \widehat{w_0}(x) \ll_B T^{j+1} (1 + \Delta x)^{-B}.
\end{equation}

Define $f$ by $nk = hl-f$.  
Write $C = C' + C''$ where $C'$ corresponds to the terms with 
\begin{equation}
\label{eq:fsize}
|f| \leq \Delta^{-1 + \varepsilon} hl.
\end{equation}
  By \eqref{eq:w0bound}, we have $C'' \ll T^{-2009}$, by taking $B$ large enough with respect to $\varepsilon$, and bounding everything trivially.  Also note that a trivial bound on $C'$ gives
\begin{equation}
\label{eq:C'trivialbound}
 C' \ll T^{1+ \varepsilon} \sum_{\substack{n \leq y_1; \; hk \leq y_2 \\ hl =f+ nk \\ 0<|f| \ll \Delta^{-1+\varepsilon} y_1 y_2 } }  \frac{1 }{(hkln)^{\half}} \ll T^{\varepsilon} y_1 y_2,
\end{equation}
where the error term comes from letting $n$, $k$, and $f$ vary freely and bounding the number of values of $h$ and $l$ by the number of divisors of $f+nk$.  

Suppose now that \eqref{eq:fsize} holds.  A simple approximation of the logarithm gives
\begin{equation*}
 \widehat{w_0}\Big(\frac{1}{2 \pi} \log\frac{hl}{nk}\Big) = \widehat{w_0}\Big(\frac{f}{2 \pi hl } + O\big(\frac{f^2}{h^2 l^2}\big) \Big).
\end{equation*}
By the mean value theorem and \eqref{eq:w0'bound}, we then get (recall $\Delta = T/L$)
\begin{equation}
\label{eq:w0approx}
 \widehat{w_0}\Big(\frac{1}{2 \pi} \log\frac{hl}{nk}\Big) = \widehat{w_0}\Big(\frac{f}{2 \pi hl }\Big) + O(L^2). 
\end{equation}

Using \eqref{eq:w0approx} and the trivial bound used to prove \eqref{eq:C'trivialbound}, we then have
\begin{equation}
 C' = \sum_{\substack{n \leq y_1; \; hk \leq y_2 \\ hl = f+ nk \\ 0< |f| \leq \Delta^{-1 + \varepsilon} hl } }  \frac{\mu(n)\mu_2(h) \sigma_{\alpha, -\beta}(l) }{(hkln)^{\half}} P_1[n] P_2[hk] e^{-l/T^3} \widehat{w_0}\Big(\frac{f}{2 \pi hl }\Big) + O(T^{-1 + \varepsilon} y_1 y_2).
\end{equation}
Since $y_1 y_2 = T^{\theta_1 + \theta_2} \ll T^{2-\varepsilon}$ by \eqref{eq:thetabounds}, this error term is $O(T^{1-\varepsilon})$.

The next step is to eliminate $k$ and express the equation $hl = f + nk$ as the congruence $hl \equiv f \pmod{n}$.  Set $k = (hl-f)/n$ so that $k = \frac{hl}{n}(1+ O(\Delta^{-1 + \varepsilon}))$.  Note that
\begin{equation}
 P_2[hk] = P_2\Big[\frac{h^2l}{n} \Big(1- \frac{f}{hl}\Big)\Big] = P_2[h^2l/n] + O(\Delta^{-1 + \varepsilon}),
\end{equation}
which uses the fact that $\frac{d}{dx} P_2[x] \ll 1$ for $x \geq 1$ (recall that $P_2'(0) = 0$ so that $P_2[x]$ is continuously differentiable at $x=y_2$).  There is a small problem for $x < 1$ since $P_2[x]$ is really only defined for $x \geq 1$.  However, the terms with $h^2l/n \ll 1$ are not included in the sum since the sum over $f$ is empty unless $hl \geq \Delta^{1-\varepsilon}$, so that $h^2l/n \geq h \Delta^{1-\varepsilon}/n \geq \Delta^{1-\varepsilon}/y_1$.  The requirements \eqref{eq:thetabounds} imply that $\theta_1 < 1$ so that $h^2l/n \geq T^{\varepsilon}$.  Thus we obtain by a trivial estimation
\begin{equation}
 C = \sum_{\substack{n \leq y_1; \; h^2l/n \leq y_2; \; f\neq 0 \\ hl \equiv f \shortmod{n}}} %\\|f| \leq \Delta^{-1 + \varepsilon} hl } }  
\frac{\mu(n)\mu_2(h) \sigma_{\alpha, -\beta}(l) }{hl} P_1[n] P_2[h^2 l/n] e^{-l/T^3} \widehat{w_0}\Big(\frac{f}{2 \pi hl }\Big) + O(T^{1-\varepsilon}).
\end{equation}
We relaxed the condition $|f| \leq \Delta^{-1 + \varepsilon} hl$ without introducing a new error term due to the decay of $\widehat{w}_0$.

Now consider the inner sum over $l$ above, namely
\begin{equation*}
D(h,n, f):= \sum_{\substack{l \leq ny_2/h^2; \; f \neq 0 \\ hl \equiv f \shortmod{n}}} \frac{ \sigma_{\alpha, -\beta}(l) }{l} P_2[h^2 l/n] e^{-l/T^3} \widehat{w_0}\Big(\frac{f}{2 \pi hl }\Big),
\end{equation*}
so that
\begin{equation*}
 C = \sum_{n \leq y_1; \; h^2/n \leq y_2; \; f \neq 0} \frac{\mu(n) \mu_2(h) P_1[n]}{h} D(h,n,f) + O(T^{1-\varepsilon}).
\end{equation*}
The plan is to apply the Voronoi summation formula to $D(h,n,f)$.  In order to easily quote results from the literature, we shall treat the case $\alpha = \beta = 0$; versions of the Voronoi formula exist for general $\sigma_{\alpha, -\beta}(l)$ (e.g. see \cite{Motohashi}, Lemma 3.7).  Since $\alpha$ and $\beta$ are small, the methods will carry over essentially unchanged to the more general case.

\begin{mylemma}
\label{lemma:Voronoi}
Let $\eta(q)$ be the arithmetical function defined by $\eta(q) = \sum_{p|q} \frac{\log{p}}{p-1}$, where the sum is over primes, and let
\begin{equation}
\label{eq:gdef}
 g(l) = l^{-1} P_2[h^2l/n] e^{-l/T^3} \widehat{w_0}\Big(\frac{f}{2 \pi hl }\Big).
\end{equation}
Suppose that $(h, n) = b$, $n= b n_1$, $h= b h_1$, and $b | f$.  Then $D(h,n,f) = D_M + D_E$, where
\begin{equation}
\label{eq:DM}
 D_M = \frac{\phi(n_1)}{n_1^2} g(1) (2\gamma -1 - 2 \eta(n_1)) + \frac{\phi(n_1)}{n_1^2} \int_1^{x} g(t) (\log{t} + 2\gamma - 2 \eta(n_1)) dt,
\end{equation}
and for any $B \geq 0$, we have
\begin{equation}
\label{eq:DEbound}
 D_E \ll  T^{\varepsilon} ( \frac{\sqrt{n_1} h}{|f|} + \frac{T^{1/3} h^{2/3}}{|f|^{2/3}}) \Big(1 + \frac{|f|\Delta h}{n y_2}\Big)^{-B}.
\end{equation}
If $b \nmid f$ then the sum is void, i.e., $D(h,n,f) = 0$.
\end{mylemma}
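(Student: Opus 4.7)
The plan is to apply the Voronoi summation formula for the divisor function in arithmetic progressions. First I would reduce the congruence: setting $b = (h,n)$, $h = bh_1$, $n = bn_1$, the congruence $hl \equiv f \shortmod n$ is solvable iff $b \mid f$, which gives the final statement of the lemma. When $b \mid f$, writing $f = bf_1$, the congruence becomes $l \equiv a \shortmod{n_1}$ with $(a,n_1)=1$ and $a \equiv \overline{h_1} f_1 \shortmod{n_1}$, so that $D(h,n,f) = \sum_{l \equiv a\,(n_1)} d(l) g(l)$ with $g$ as in \eqref{eq:gdef}.

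Second, I would use Mellin inversion together with the character decomposition
\[
\sum_{l \equiv a\,(n_1)} \frac{d(l)}{l^s} = \frac{1}{\phi(n_1)} \sum_{\chi \shortmod{n_1}} \bar\chi(a) L(s,\chi)^2,
\]
and shift the contour past the double pole at $s = 1$ of the $\chi_0$-term. The pole contributes the main term: using the Laurent expansion
\[
L(s,\chi_0)^2 = \Bigl(\tfrac{\phi(n_1)}{n_1}\Bigr)^{\!2}\Bigl[\tfrac{1}{(s-1)^2} + \tfrac{2(\gamma+\eta(n_1))}{s-1} + O(1)\Bigr]
\]
combined with $\tilde g(s) = \tilde g(1) + (s-1)\tilde g'(1) + O((s-1)^2)$ yields a residue of $\frac{\phi(n_1)}{n_1^2}\bigl[\tilde g'(1) + 2(\gamma+\eta(n_1)) \tilde g(1)\bigr]$. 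Splitting this integral at $t=1$ and absorbing the $\int_0^1$ piece into a boundary term at $t = 1$ using the smoothness of $g$ there regroups the expression into the stated two-part formula for $D_M$, matching the Abel-summation identity for $\sum_{l \leq x,\, l \equiv a\,(n_1)} d(l)$. The remainder of the contour integral is reshaped, via the functional equations of $L(s,\chi)^2$, into a Voronoi dual sum of Bessel transforms of $g$ weighted by $d(m)$ and by Ramanujan/Kloosterman sums of modulus $n_1$; this is $D_E$.

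For $D_E$, the key geometric input is that $g$ has effective support in $l \in [\Delta|f|/h,\, ny_2/h^2]$ with $|g(l)| \ll hL/|f|$: the lower end follows from the decay of $\widehat{w_0}$ via \eqref{eq:w0bound}, and the upper end from $P_2$. When $|f|\Delta h \gg n y_2$ this interval is empty, and repeated integration by parts in the Bessel transforms of $g$ yields the claimed decay factor $(1+|f|\Delta h/(ny_2))^{-B}$. In the complementary range I would bound the dual sum two ways: a trivial bound, using $|Y_0(z)|,|K_0(z)| \ll 1 + |\log z|$ for small $z$ (with exponential decay of $K_0$ for large $z$) together with the Ramanujan-sum bound $|c_{n_1}(m)| \ll (m,n_1)$, yields the $\sqrt{n_1} h/|f|$ piece; and a stationary-phase analysis on the oscillatory part, via the asymptotic $Y_0(z) \sim z^{-1/2}\cos(z - \pi/4)$, extracts cube-root savings and produces the $T^{1/3} h^{2/3}/|f|^{2/3}$ piece.

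The main obstacle is the stationary-phase step. In the effective range of $l$, the Bessel oscillation is compounded with the oscillation coming from $\widehat{w_0}(f/(2\pi h l))$; one must identify the correct compound stationary point and extract the cube-root savings uniformly in $h$, $n_1$, $|f|$, and the dyadic scale of the dual variable $m$. A dyadic decomposition in both $l$ and $m$ should cleanly separate the oscillatory and trivial regimes, but verifying that all sub-leading Bessel-asymptotic terms, Kloosterman-sum fluctuations, and endpoint contributions are absorbed into the $T^\varepsilon$ factor will require care.
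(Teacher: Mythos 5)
Your strategy and the paper's proof agree on the first reduction (both convert $hl\equiv f\pmod n$ into $l\equiv\overline{h_1}f_1\pmod{n_1}$ and note the sum is void unless $b\mid f$), but then diverge sharply. The paper observes that $g$ is smooth with $g(x)=0$ at the endpoint $x=ny_2/h^2$ (since $P_2(0)=0$), applies Abel summation to write $D(h,n,f)=-\int_1^x g'(t)\bigl[\sum_{l\le t,\;l\equiv\overline{h_1}f_1\,(n_1)}d(l)\bigr]\,dt$, and then quotes as a black box the classical sharp-cutoff divisor formula in arithmetic progressions (Exercise 7, p.~79 of \cite{IK}), whose error term is $O\bigl((n_1t)^{\varepsilon}(\sqrt{n_1}+t^{1/3})\bigr)$. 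A single integration by parts on the main term produces $D_M$, and the estimates $\int_1^x|g'(t)|\,dt$ and $\int_1^x t^{1/3}|g'(t)|\,dt$, controlled by \eqref{eq:w0'bound}, give $D_E$ immediately. All oscillatory analysis is thereby avoided.

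Your proposal instead re-derives that Voronoi formula from scratch: character orthogonality, functional equations of $L(s,\chi)^2$, Bessel transforms of $g$, Ramanujan/Kloosterman sums, and a compound stationary-phase analysis. In principle this reproduces the same bound (your $T^{1/3}h^{2/3}/|f|^{2/3}$ term is precisely the shadow of the $t^{1/3}$ error in the quoted result), but the step you yourself flag as the ``main obstacle'' --- the stationary phase extracting cube-root savings uniformly in $h$, $n_1$, $|f|$, and the dual dyadic scale --- is exactly the hard content of the lemma, and you leave it unexecuted. Your ``trivial'' bound yielding $\sqrt{n_1}h/|f|$ likewise rests on unverified claims about the size of Bessel transforms of $g$ in the near-diagonal range. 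These are genuine gaps. The idea you are missing is that you need not carry a smooth weight through the Voronoi machinery at all: partial summation converts the smoothly weighted sum into an integral of sharp-cutoff divisor sums, for which the error term can be quoted wholesale, reducing the entire lemma to two elementary integral estimates for $g'$. Had you executed the stationary-phase analysis in full, the two proofs would be equivalent in content; as written, the paper's route is a decisive simplification and yours remains incomplete at the crucial step.
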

\begin{proof}[Proof of Lemma \ref{lemma:Voronoi}]
Note that \eqref{eq:w0'bound} implies
\begin{equation}
\label{eq:g'bound}
 g'(t) \ll t^{-2} T^{1 + \varepsilon} \Big(1 + \frac{|f|\Delta}{ht}\Big)^{-B}.
\end{equation}
Then with $x = n y_2/h^2$, $f= b f_1$, we have
\begin{equation*}
 D(h,n,f) = \sum_{\substack{l \leq x; \; l \equiv \overline{h_1} f_1 \shortmod{n_1}}} d(l) g(l).
\end{equation*}
By partial summation, noting $g(x) = 0$, 
\begin{equation*}
 D(h,n,f) = - \int_1^{x} g'(t)  [\sum_{\substack{l \leq t; \; l \equiv \overline{h_1} f_1 \shortmod{n_1}}} d(l) ] dt.
\end{equation*}
The Voronoi formula (see Exercise 7 of \cite{IK} p.79, for example) gives that
\begin{equation}
\label{eq:Voronoi}
 \sum_{\substack{l \leq t; \; l \equiv \overline{h_1} f_1 \shortmod{n_1}}} d(l) = M + E,
\end{equation}
where
\begin{equation*}
 M = \frac{\phi(n_1)}{n_1^2} t (\log{t} + 2\gamma -1 - 2 \eta(n_1)), %\quad \eta(q) = \sum_{p | q} \frac{\log{p}}{p-1}.
% \end{equation}
% and
% \begin{equation}
 \quad E \ll (n_1 t)^{\varepsilon} (\sqrt{n_1} + t^{1/3}).
\end{equation*}
Write $D(h,n,f) = D_M + D_E$ corresponding to \eqref{eq:Voronoi}.  That is,
\begin{equation*}
 D_M = - \frac{\phi(n_1)}{n_1^2} \int_1^{x} g'(t) t (\log{t} + 2\gamma -1 - 2 \eta(n_1)) dt.
\end{equation*}
Integration by parts gives \eqref{eq:DM}.

As for the error term, a straightforward computation with \eqref{eq:g'bound} gives
\begin{equation*}
 \int_1^{x} |g'(t)| dt \ll T^{\varepsilon} \frac{h}{|f|} \Big(1 + \frac{|f|\Delta}{hx}\Big)^{-B}, \qquad \int_1^{x} t^{1/3} |g'(t)| dt \ll \frac{T^{1/3 + \varepsilon} h^{2/3}}{|f|^{2/3}} \Big(1 + \frac{|f|\Delta}{hx}\Big)^{-B}.
\end{equation*}
Consequently, \eqref{eq:DEbound} follows.
\end{proof}

Now we return to the proof of Lemma \ref{lemma:Cbound}.  We show
\begin{mylemma}
\label{lemma:Cdecomp}
 Assume \eqref{eq:thetabounds} holds.  Then
\begin{equation}
 C = C_0 + O(T^{1-\varepsilon}),
\end{equation}
where
\begin{equation}
\label{eq:C0def}
 C_0  = \sum_{n \leq y_1} \sum_{h \leq n y_2/\Delta^{1-\varepsilon}} \sum_{ b|f; \; f \neq 0} \frac{\mu(n) \mu_2(h) P_1[n]}{h} \frac{\phi(n_1)}{n_1^2} \int_{\Delta^{1-\varepsilon}/h}^{n y_2/h^2} g(t) (\log{t} + 2\gamma - 2 \eta(n_1)) dt.
\end{equation}
\end{mylemma}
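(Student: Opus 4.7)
I would apply Voronoi summation (Lemma \ref{lemma:Voronoi}) to the inner sum $D(h,n,f)$, obtaining the decomposition $D = D_M + D_E$, and then show that the deviations between $D_M$ and the integrand of $C_0$, together with the entire contribution of $D_E$, are bounded by $O(T^{1-\varepsilon})$.

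The easy discrepancies come from $D_M$. First, the $g(1)$ term in \eqref{eq:DM}: using \eqref{eq:w0bound} to truncate the sum over $f$ at $|f| \ll hT^\varepsilon/\Delta$ (with $b\mid f$), the total contribution is $\ll L\,T^{(\theta_1+\theta_2)/2+\varepsilon}$, which is $\ll T^{1-\varepsilon}$ since \eqref{eq:thetabounds} implies $\theta_1+\theta_2 < 2$. Second, raising the lower limit of integration from $1$ to $\Delta^{1-\varepsilon}/h$: on $t \leq \Delta^{1-\varepsilon}/h$ with $|f|\geq 1$ one has $|f|\Delta/(ht) \gg \Delta^\varepsilon$, so \eqref{eq:w0bound} makes the integrand negligible. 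Third, restricting $h$ to $h \leq ny_2/\Delta^{1-\varepsilon}$ is automatic, because outside this range the interval $[\Delta^{1-\varepsilon}/h,\,ny_2/h^2]$ is empty.

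The substantive task is bounding $D_E$ via \eqref{eq:DEbound}. The decay factor $(1+|f|\Delta h/(ny_2))^{-B}$ effectively truncates the $f$-sum at $|f| \ll ny_2 T^\varepsilon/(h\Delta)$, which, combined with $b\mid f$, forces the effective cap $h \leq ny_2 T^\varepsilon/(b\Delta)$; under $\theta_1+\theta_2<2$ this already dominates the original $h\leq\sqrt{ny_2}$. For the first piece $T^\varepsilon\sqrt{n_1}h/|f|$, the $f$-sum contributes $\ll T^\varepsilon/b$, and summing over $h$ and $n$ (writing $n = bn_1$, $h = bh_1$) yields $\ll T^{5\theta_1/2+\theta_2-1+\varepsilon}$, which is $\ll T^{1-\varepsilon}$ exactly under $\tfrac{5}{2}\theta_1+\theta_2<2$. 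For the second piece $T^{1/3+\varepsilon}h^{2/3}/|f|^{2/3}$, the $f$-sum contributes $\ll T^\varepsilon (ny_2/(bh\Delta))^{1/3}/b^{2/3}$, and the subsequent $h$- and $n$-sums yield $\ll T^{5\theta_1/3+2\theta_2/3-1/3+\varepsilon}$, which is $\ll T^{1-\varepsilon}$ exactly under $\tfrac{5}{3}\theta_1+\tfrac{2}{3}\theta_2<\tfrac{4}{3}$. The $b$-sums in both cases converge because the combined weight is $\ll 1/b^{1+\delta}$ for some $\delta>0$.

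The principal obstacle is the $D_E$ estimate: the Voronoi error bound \eqref{eq:DEbound} has two pieces that are each only barely admissible, and they are precisely what dictate the two non-trivial conditions in \eqref{eq:thetabounds}. One must use both the $1/b$ saving from $b\mid f$ and the sharper effective cap $h\leq ny_2T^\varepsilon/(b\Delta)$ (rather than $h\leq\sqrt{ny_2}$); without either saving the sums diverge. Once these are in hand, combining the three $D_M$ reductions with the $D_E$ bound gives $C = C_0 + O(T^{1-\varepsilon})$ as claimed.
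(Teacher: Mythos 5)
Your proof is correct and follows the same strategy as the paper: decompose $C$ via the Voronoi formula into $C_M + C_E$, split $C_M$ into the $g(1)$ residue and the integral piece, then handle the $C_E$ sum by exploiting both the $b \mid f$ saving and the effective truncation $h \ll n y_2 T^\varepsilon / (b\Delta)$ to recover exactly the two exponents that match the conditions in \eqref{eq:thetabounds}. The only quantitative difference is in the $g(1)$ term: the paper exploits the rapid decay of $\widehat{w_0}$ together with $\sqrt{y_1 y_2} \ll \Delta^{1-\varepsilon}$ to get $O(T^\varepsilon)$, while your bound $\ll L\, T^{(\theta_1+\theta_2)/2+\varepsilon}$ is looser, but both are comfortably $O(T^{1-\varepsilon})$, so the conclusion is unaffected.
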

\begin{proof}[Proof of Lemma \ref{lemma:Cdecomp}]
Write $C = C_M + C_E + O(T^{1-\varepsilon})$ according to the decomposition $D(h,n,f) = D_M + D_E$, and similarly $C_M = C_1 + C_2$ according to the two terms of \eqref{eq:DM}.  A direct application of \eqref{eq:DEbound} shows
\begin{equation*}
 C_E \ll T^{\varepsilon} \frac{y_1^{5/2} y_2}{\Delta} + T^{\varepsilon} \frac{y_1^{5/3} y_2^{2/3}}{\Delta^{1/3}},
\end{equation*}
which is $\ll T^{1-\varepsilon}$ since \eqref{eq:thetabounds} holds.

Now we bound $C_1$.  The point is that essentially $f \ll h/\Delta$, yet $h \leq \sqrt{n y_2} = O(\Delta^{1-\varepsilon})$, so that the sum over $f$ practically has no length.  Explicitly, we have 
\begin{equation*}
C_1 \ll T^{\varepsilon} \sum_{n \leq y_1, h^2/n \leq y_2, f \neq 0} \frac{1}{n_1} \frac{1}{h} \Big|\widehat{w_0}\big(\frac{f}{2\pi h}\big) \Big| \ll T^{1+\varepsilon} \sum_{n \leq y_1} \sum_{h \leq \sqrt{y_1 y_2}} \frac{1}{n_1} \frac{1}{h} \leg{h}{\Delta}^B, 
\end{equation*}
which after writing $n = b n_1$, gives
\begin{equation*}
C_1 \ll T^{1+\varepsilon} \leg{\sqrt{y_1 y_2}}{T}^B \sum_{b \leq y_1} \sum_{n_1 \leq y_1/b} \frac{1}{n_1} \ll T^{2+\varepsilon} \leg{\sqrt{y_1 y_2}}{T}^B.
\end{equation*}
Since \eqref{eq:thetabounds} holds, we have $\theta_1 + \theta_2 < 2$.  Taking $B = 2(1-\frac{\theta_1+\theta_2}{2})^{-1}$ gives $C_1 \ll T^{\varepsilon}$.

By definition,
\begin{equation*}
 C_2  = \sum_{n \leq y_1} \sum_{h \leq \sqrt{ny_2}} \sum_{f \neq 0} \frac{\mu(n) \mu_2(h) P_1[n]}{h} \frac{\phi(n_1)}{n_1^2} \int_{1}^{n y_2/h^2} g(t) (\log{t} + 2\gamma - 2 \eta(n_1)) dt,
\end{equation*}
so that $C_0$ is given by the same expression except the integral has lower bound at $\Delta^{1-\varepsilon}/\sqrt{y_1 y_2}$.  Again, the point is that essentially $1 \leq |f| \ll ht/\Delta^{1-\varepsilon}$.  An argument similar to that used above in bounding $C_1$ shows that $C_2 = C_0 + O(T^{\varepsilon})$.
\end{proof}

Now the proof of Lemma \ref{lemma:Cbound} is reduced to showing $C_0 \ll T/L^B$. We arrange $C_0$ as follows
\begin{equation}
C_0 = \sum_{h \leq y_1 y_2 /\Delta^{1-\varepsilon}} \frac{\mu_2(h)}{h} \int_{\Delta^{1-\varepsilon}/h}^{y_1 y_2/h^2} t^{-1} e^{-t/T^3} S_1 S_2 dt,
\end{equation}
where
\begin{equation*}
S_1 = \sum_{h^2 t/y_2 \leq n \leq y_1} \mu(n) P_1[n] P_2[h^2t/n] \frac{\phi(n_1)}{n_1^2} (\log{t} + 2\gamma - 2 \eta(n_1)),
\end{equation*}
and
\begin{equation*}
S_2 = \sum_{b|f; \; f \neq 0} \widehat{w_0}(f/2\pi ht).
\end{equation*}
\begin{mylemma}
\label{lemma:S2}
We have
\begin{equation}
\label{eq:S2bound}
 S_2 \ll T,
\end{equation}
uniformly in $h$ and $t$.
\end{mylemma}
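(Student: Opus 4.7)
The plan is to attack $S_2$ by Poisson summation on the variable $f$. First I would parametrize the condition $b \mid f$ by writing $f = b f'$, so
\[
S_2 = \sum_{f' \neq 0} \widehat{w_0}\!\left(\frac{b f'}{2\pi h t}\right).
\]
Adding back the $f' = 0$ contribution costs at most $\widehat{w_0}(0) = \int w_0 \ll T$, so the problem reduces to estimating the complete sum $\sum_{f' \in \mathbb{Z}} \widehat{w_0}(b f'/(2\pi h t))$.

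Next I would apply Poisson summation. With the Fourier convention implicit in the appearance of the factor $\tfrac{1}{2\pi}$ in $\widehat{w_0}(\tfrac{1}{2\pi}\log\tfrac{hl}{nk})$ (namely $\widehat{g}(\xi) = \int g(t) e^{-2\pi i t\xi}\, dt$), the rescaling identity $\mathcal{F}[w_0(A\cdot)](\xi) = A^{-1}\widehat{w_0}(\xi/A)$ with $A = 2\pi h t/b$ gives the dual formula
\[
\sum_{f' \in \mathbb{Z}} \widehat{w_0}\!\left(\frac{b f'}{2\pi h t}\right) = \frac{2\pi h t}{b} \sum_{m \in \mathbb{Z}} w_0\!\left(\frac{2\pi h t \, m}{b}\right).
\]

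Now the support condition $w_0 \subset [T/4, 2T]$ does the work. I would split into two cases. If $2\pi h t/b > 8T$, then no nonzero $m$ lies in the support of $w_0(2\pi htm/b \cdot)$, and $w_0(0) = 0$; hence the right-hand side is $0$, and $S_2 = -\widehat{w_0}(0) \ll T$. Otherwise $2\pi ht/b \leq 8T$, and the number of integers $m$ with $T/4 \leq 2\pi htm/b \leq 2T$ is $\ll 1 + bT/(ht)$; using $w_0 \ll 1$, the Poisson sum is bounded by
\[
\frac{2\pi h t}{b}\left(1 + \frac{bT}{h t}\right) \ll \frac{ht}{b} + T \ll T.
\]
Combined with the $\widehat{w_0}(0)$ correction, this gives $S_2 \ll T$ in both cases, as desired.

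I do not expect any real obstacle here: the estimate is essentially a one-line Poisson application once the scaling is set up, and the only point requiring care is confirming the Fourier convention (fixed by the earlier formula $\widehat{w_0}(\tfrac{1}{2\pi}\log\tfrac{hl}{nk})$) so that the Poisson dual lands with the correct scaling factor $2\pi ht/b$. The trivial bound $w_0 \ll 1$ suffices throughout; we do not need the derivative bounds \eqref{eq:w0'bound} because the sum $\sum_m w_0(\cdots)$ is handled purely by counting support points.
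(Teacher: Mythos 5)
Your proof is correct and follows essentially the same route as the paper: reduce to a complete sum in $f'$ at the cost of an $O(\widehat{w_0}(0)) = O(T)$ term, apply Poisson summation, and count the integers landing in the support of $w_0$ (indeed, after writing $h = b h_1$ your scaling factor $2\pi h t/b = 2\pi h_1 t$ is exactly the paper's $X$). The observation that only $w_0 \ll 1$ and the support condition are needed, rather than the derivative bounds, is the same observation the paper implicitly uses.
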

\begin{mylemma}
\label{lemma:S1}
 For any $B \geq 0$, we have
\begin{equation}
\label{eq:S1bound}
 S_1 \ll_B L^{-B}.
\end{equation}
\end{mylemma}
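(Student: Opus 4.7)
The plan is to exploit $\mu(n)$-cancellation via Mellin inversion on $P_1$ and $P_2$, recognize the resulting Dirichlet series in $n$ as having $1/\zeta(w+1)$ as its only singular factor, and shift contours into the Vinogradov--Korobov zero-free region of $\zeta$ to obtain arbitrary polynomial savings in $L$.

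First split $n$ by $b=(n,h)$: writing $n=bn_1$ with $b\mid h$ squarefree, $n_1$ squarefree, and $(n_1,h)=1$, one has $S_1 = \sum_{b\mid h,\,b\text{ sq.free}}\mu(b)\,T_b$. For each $b$, apply the Mellin formula \eqref{eq:P1Mellin} to both $P_1[bn_1]$ and $P_2[h^2t/(bn_1)]$; swapping sum and integral expresses $T_b$ as a double contour integral in $(s,u)$ whose Dirichlet-series ingredient is
\[
D_h(w) := \sum_{\substack{m\text{ sq.free}\\ (m,h)=1}}\frac{\mu(m)\phi(m)\bigl(\log t+2\gamma-2\eta(m)\bigr)}{m^{w+2}},\qquad w=s-u.
\]
A direct Euler-product comparison shows $D_h(w)=G_h(w)/\zeta(w+1)$, where $G_h$ is analytic and $\ll h^\varepsilon$ uniformly on $\mathrm{Re}(w)\geq -1+\delta$: the ratio $\prod_{p\nmid h}(1-(p-1)p^{-w-2})/\prod_{p\nmid h}(1-p^{-w-1})$ converges absolutely for $\mathrm{Re}(w)>-1$, and the $\eta$-twist is handled by additivity of $\eta$ on primes, producing a logarithmic-derivative-type factor convergent in the same half-plane. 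In particular $D_h(0)=0$.

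Next, shift both contours into the Vinogradov--Korobov zero-free region, taking $\mathrm{Re}(s)=-c_0(\log L)^{-2/3}(\log\log L)^{-1/3}$ and $\mathrm{Re}(u)=c_0(\log L)^{-2/3}(\log\log L)^{-1/3}$. On these contours, $|1/\zeta(w+1)|\ll\log L$, the factor $y_1^{\mathrm{Re}(s)}=\exp\bigl(-c_0\theta_1 L^{1/3}(\log L)^{-1/3}\bigr)\ll_B L^{-B}$ for every $B$, and vertical integrals converge thanks to the decay $|s|^{-(i+1)}|u|^{-(j+1)}$. The crossings at $s=0$ and $u=0$ produce residues that are polynomials of degrees $i$ and $j$ in the logarithms $\log y_1$, $\log y_2$, $\log(h^2t/b)$, with coefficients drawn from the derivatives $D_h^{(k)}(0)$; summed against the weights $a_i i!/(\log y_1)^i$ and $b_j j!/(\log y_2)^j$ coming from $P_1$ and $P_2$, these residues must combine to at most $O(L^{-B})$. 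This is the main obstacle: the combinatorial cancellation is forced by the vanishing conditions $P_1(0)=0$ and $P_2(0)=P_2'(0)=P_2''(0)=0$ — the third-order vanishing of $P_2$, emphasized as essential in the paper's introduction, is precisely what allows the higher-order poles $1/u^{j+1}$ with $j\geq 3$ to be offset and all would-be main terms to cancel down to $L^{-B}$. Once that combinatorial identity is verified, the off-contour bound yields the lemma.
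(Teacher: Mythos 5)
Your overall plan—Mellin-invert $P_1$ and $P_2$, recognize $1/\zeta(1+s-u)$, and shift contours into a zero-free region—follows the same skeleton as the paper (where the change of variables $s\to s+u$ is then used to isolate an entire function $F(s)$ and shift only the $s$-contour). However, your final step contains a genuine gap, and moreover it diagnoses the wrong mechanism for the saving.

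You claim that the residues crossed at $s=0$ and $u=0$ are of size $O(1)$ and that they ``must combine to $O(L^{-B})$'' via a combinatorial cancellation forced by $P_1(0)=0$ and $P_2(0)=P_2'(0)=P_2''(0)=0$. This is not correct, and you do not verify the cancellation you invoke. In fact the vanishing conditions on $P_1,P_2$ play no role whatsoever in the proof of this lemma; the paper's argument never uses them, and the bound $S_1'\ll L^{-B}$ holds for arbitrary polynomials. The actual source of the saving is the inequality $R=bh_1^2t\ge\Delta^{1-\varepsilon}\ge y_1T^{\varepsilon}$, which makes the ratio $y_2/R$ at most $T^{-\varepsilon}$. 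Consequently the factor $(y_2/R)^{u}$ is already superpolynomially small once $\mathrm{Re}(u)$ is a small \emph{positive} multiple of a zero-free-region exponent. If you shift $\mathrm{Re}(s)$ to a small negative value while keeping $\mathrm{Re}(u)$ small and positive, the only pole you cross is the one at $s=0$ from $1/s^{i+1}$ ($u=0$ is never crossed), and the resulting residue is still integrated over the $u$-line where $(y_2/R)^u$ is exponentially small; there is nothing of size $O(1)$ to cancel. In the paper's version the same phenomenon appears after the substitution $s\to s+u$ as the ``key point'' that $y_1^s(y_1y_2/R)^{-s}=(R/y_2)^s$ is small for $\mathrm{Re}(s)<0$. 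So the step you flag as ``the main obstacle'' is illusory, but since your write-up leaves that identity unproved, the argument as written is incomplete. (As a minor point, a flat contour at $\mathrm{Re}(w)=-2c_0(\log L)^{-2/3}(\log\log L)^{-1/3}$ does not stay inside the Vinogradov--Korobov region uniformly in $\mathrm{Im}(w)$; one must truncate at a height $Y$ and use a rectangular contour as in Davenport, Chapter 18, or as in the paper's Lemma \ref{lemma:L1computed}. The classical zero-free region already suffices here.)
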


\noindent An easy application of Lemmas \ref{lemma:S2} and \ref{lemma:S1} finally completes the proof of Lemma \ref{lemma:Cbound}, since they show $C_0 \ll T/L^{B}$.
\end{proof}

\begin{proof}[Proof of Lemma \ref{lemma:S2}]
Recall that $b = (h, n)$, so that $S_2$ takes the form
\begin{equation*}
 \sum_{f_1 \neq 0} \widehat{w_0}(f_1/2 \pi h_1 t).
\end{equation*}
Let $X \geq 1$ be a parameter.  Then by Poisson summation,
\begin{align*}
 \sum_{q \neq 0} \widehat{w_0}(q/X) &= - \widehat{w_0}(0) + \sum_{k \in \mz} \intR \widehat{w_0}(u/X) e(-uk) du
\\
&= -\widehat{w_0}(0) + X \sum_{k \in \mz} w_0(kX).
\end{align*}
Since $w$ has support in $[T/4, 2T]$, we have $\widehat{w_0}(0) \ll T$ (recall $w_0(x) = w(x) (x/2\pi)^{-\beta}$ and that $\beta \ll 1/\log{T}$).  The support of $w_0$ also implies that $k \asymp T/X$ so that the sum over $k$ also gives $O(T)$.  Note that if $X > 2T$ then the sum over $k$ is identically zero.  
\end{proof}

\begin{proof}[Proof of Lemma \ref{lemma:S1}]
This is essentially a variation on the prime number theorem.

To begin, write $n = bn_1$ where $(n_1, h_1) = 1$.  Then
\begin{equation*}
 S_1 = \sum_{\substack{b h_1^2 t/y_2 \leq  n_1 \leq y_1/b \\ (n_1, h_1)=1}} \mu(bn_1) P_1[b n_1] P_2[b h_1^2t/n_1] \frac{\phi(n_1)}{n_1^2} (\log{t} + 2\gamma - 2 \eta(n_1)).
\end{equation*}
For simplicity, consider
\begin{equation*}
 S_1' := \sum_{n} \mu(n) P_1[n] P_2[R/n] \frac{1}{n},
\end{equation*}
for a parameter $R \geq y_1 T^{\varepsilon}$ (in our application, $R = bh_1^2 t \geq \Delta^{1-\varepsilon} \geq y_1 T^{\varepsilon}$). We will show
\begin{equation}
\label{eq:S1'bound}
 S_1' \ll L^{-B}.
\end{equation}
A straightforward modification of the method gives the same bound for $S_1$.  

By convention, $P_1[n]$ is zero unless $1 \leq n \leq y_1$, and similarly $P_2[R/n] = 0$ unless $R/y_2 \leq n \leq R$.  The condition $n \leq R$ is implied by the condition $n \leq y_1$.  By taking Mellin transforms of $P_1$ and $P_2$ (that is, using \eqref{eq:P1Mellin}), we get
\begin{equation*}
 S_1' = \sum_j \sum_k \frac{a_j j! b_k k!}{(\log y_1)^j (\log y_2)^k} \sum_{n} \Big(\frac{1}{2 \pi i} \Big)^2 \int_{(\varepsilon)} \int_{(3\varepsilon)} y_1^s (y_2/R)^u \frac{\mu(n)}{n^{1+s-u}} \frac{ds}{s^{j+1}} \frac{du}{u^{k+1}}.
\end{equation*}
The sum over $n$ converges absolutely now, so that
\begin{equation*}
 S_1' = \sum_j \sum_k \frac{a_j j! b_k k!}{(\log y_1)^j (\log y_2)^k} \Big(\frac{1}{2 \pi i} \Big)^2 \int_{(\varepsilon)} \int_{(3\varepsilon)} y_1^s (y_2/R)^u \frac{1}{\zeta(1+s-u)} \frac{ds}{s^{j+1}} \frac{du}{u^{k+1}}.
\end{equation*}
Now change variables $s \rightarrow s + u$, so that with
\begin{equation}
\label{eq:Fdef}
 F(s) = \frac{1}{2 \pi i} \int_{(2\varepsilon)} x^u \frac{du}{(u+s)^{j+1} u^{k+1}}, \quad x=\frac{y_1 y_2}{R},
\end{equation}
we have
\begin{equation}
\label{eq:S1'F}
 S_1' = \sum_j \sum_k \frac{a_j j! b_k k!}{(\log y_1)^j (\log y_2)^k} \frac{1}{2 \pi i} \int_{(\varepsilon)} y_1^s \frac{1}{\zeta(1+s)} F(s) ds.
\end{equation}
In the integral representation \eqref{eq:Fdef} for $F(s)$, we initially suppose $\text{Re}(s) < 2\varepsilon$.

We need to develop the analytic properties of $F$.  It turns out that $F(s)$ is an entire function of $s$ and has good growth properties. 
First, if $x \leq 1$ then by moving the contour far to the right we get the uniform  bound $F(s) \ll (1+|s|)^{-j-1}$.  Even when $x  > 1$ we can do this, which shows that $F$ is entire.  Next suppose that $x > 1$.  
In this case, we show that %if $|s| \leq (\log{x})^{-1}$, then
% \begin{equation}
% \label{eq:Fbound}
% F(s) \ll (\log(x+2))^{j+k+1},
% \end{equation}
%while 
if $|s| > (\log{x})^{-1}$ we have a formula for $F$ of the form
\begin{equation}
\label{eq:Fformula}
F(s) = \sum_{l \leq k} c_{j,k,l} \frac{(\log{x})^{k-l}}{s^{j+l+1}} + x^{-s} \sum_{l \leq j} d _{j,k,l} \frac{(\log{x})^{j-l}}{s^{k+l+1}} + O((1+|s|)^{-j-1}),
\end{equation}
for certain constants $c_{j,k,l}$, $d_{j,k,l}$ (we will give them explicitly below).
With the %bound \eqref{eq:Fbound} and the 
formula \eqref{eq:Fformula} it is straightforward to prove \eqref{eq:S1'bound} as in Chapter 18 of \cite{Davenport} for example. It is a key point that $y_1^s \leg{y_1 y_2}{R}^{-s} = \leg{R}{y_2}^s$ becomes small for $s$ with negative real part.

We prove \eqref{eq:Fformula} now.  Since $x>1$, we move the contour far to the left, crossing poles as $u=0, u=-s$.  The new contour is again bounded by $(1+|s|)^{-j-1}$, accounting for the error term in \eqref{eq:Fformula}.  The residues can be expressed as contour integrals of the form
\begin{equation}
 \label{eq:Fcontour}
 \frac{1}{2\pi i} \oint x^u \frac{du}{(u+s)^{j+1} u^{k+1}},
 \end{equation}
 with one a small circle around $u=0$, and the other around $u=-s$ (if $s=0$ then it is just one contour integral around $u=0$). 
Consider \eqref{eq:Fcontour} on the circle of very small radius (small compared to $|s|$) around $u=0$.  By the binomial theorem,
\begin{equation*}
\frac{1}{(u+s)^{j+1}} = \frac{1}{s^{j+1}} \sum_{l=0}^{\infty} (-1)^l \binom{j+l}{j} \leg{u}{s}^l.
\end{equation*}
Inserting this into \eqref{eq:Fcontour} and reversing the order of summation and integration gives
\begin{equation*}
\frac{1}{s^{j+1}} \sum_{l=0}^{\infty} (-1)^l \binom{j+l}{j} s^{-l} \frac{1}{2\pi i} \oint x^u \frac{du}{u^{k-l+1}} = \sum_{l \leq k} (-1)^l \binom{j+l}{j} \frac{(\log{x})^{k-l}}{s^{j+l+1}(k-l)!}.
\end{equation*}
This accounts for the first term in \eqref{eq:Fformula} above.
The second contour around $u=-s$ can be reduced to an instance of this previous formula after changing variables $u \rightarrow u-s$.  This switches the roles of $j$ and $k$, and multiplies by $x^{-s}$.  
\end{proof}

\subsection{Reduction to a contour integral}
Next we express the main term in \eqref{eq:I12intermediate} as a contour integral.  By \eqref{eq:P1Mellin} (and similarly for $P_2$), we have
\begin{multline}
 I_{12}(\alpha, \beta) = \widehat{w_0}(0) \sum_{i,j} \frac{a_i b_j i! j!}{(\log y_1)^i (\log y_2)^j}
\leg{1}{2\pi i}^3 \int_{(1)} \int_{(1)} \int_{(1)}  T^{3z} \Gamma(z) y_1^s y_2^u
\\
\sum_{\substack{hl = nk} }  \frac{\mu(n)\mu_2(h) \sigma_{\alpha, -\beta}(l) }{(hk)^{\half + u} n^{\half + s} l^{\half +z}} \frac{dz \, ds \, du}{s^{i+1} u^{j+1}} + O(T^{1-\varepsilon}).
\end{multline}
A calculation gives
\begin{equation}
\label{eq:arithmeticalcross}
 \sum_{hl = nk}  \frac{\mu(n)\mu_2(h) \sigma_{\alpha, -\beta}(l) }{(hk)^{\half + u} n^{\half + s} l^{\half +z}} = \frac{\zeta^2(1+u+s) \zeta(1 + \alpha + u + z) \zeta(1 - \beta + u + z)}{\zeta^2(1 + 2u) \zeta(1 + \alpha + s + z)\zeta(1 - \beta + s + z)} A(s,u,z),
\end{equation}
where $A(s,u,z)$ is a certain arithmetical factor that is given by an Euler product that is absolutely and uniformly convergent in some product of fixed half-planes containing the origin.  We first move the $s$ and $u$ contours to $\text{Re} = \delta$, and then move the $z$-contour to $-2\delta/3$, where $\delta > 0$ is some fixed constant such that the arithmetical factor converges absolutely.  By doing so we only cross a pole at $z=0$.  On the new line we simply bound the integral by absolute values, giving the following contribution to $I_{12}$
\begin{equation*}
 \ll |\widehat{w_0}(0)| \leg{y_1 y_2}{T^{2}}^{\delta} \ll T^{1-\varepsilon},
\end{equation*}
since $\theta_1 + \theta_2 < 2$.  Thus
\begin{equation}
\label{eq:I12}
 I_{12}(\alpha,\beta) = \widehat{w_0}(0) \sum_{i,j} \frac{a_i b_j i! j!}{(\log y_1)^i (\log y_2)^j}
 K_{12} + O(T^{1-\varepsilon}),
\end{equation}
where
\begin{equation}
K_{12} =  \leg{1}{2\pi i}^2 \int_{(\varepsilon)} \int_{(\varepsilon)} y_1^s y_2^u \frac{\zeta^2(1+u+s) \zeta(1 + \alpha + u) \zeta(1 - \beta + u)}{\zeta^2(1 + 2u) \zeta(1 + \alpha + s)\zeta(1 - \beta + s)} A(s,u,0) \frac{ds du}{s^{i+1} u^{j+1}}.
\end{equation}

\subsection{Evaluation of $K_{12}$}
We now evaluate $K_{12}$ asymptotically; this is somewhat subtle.  Considering the pole of $\zeta^2(1+u+s)$ at $u=-s$, we are wary about moving contours into the critical strip.  To get around this delicate issue, we shall separate the variables $s$ and $u$.

First notice that by moving the contours of integration to $\varepsilon \asymp 1/L$, and bounding the integral with absolute values, we see that $K_{12} \ll L^{i+j}$ (which translates to showing $I_{12}(\alpha,\beta)$ is asymptotically constant as $T \rightarrow \infty$, consistent with successful mollification).  
Let $K_{12}'$ be the same integral as $K_{12}$ but with $A(s,u,0)$ replaced by $A(0,0,0)$.  We check in Section \ref{section:crossarithmeticalfactor} below that $A(0,0,0) = 1$, a result we now use freely.  Then we see that $K_{12} = K_{12}' + O(L^{i+j-1})$.  

Next we replace $\zeta^2(1+s+u)$ by its Dirichlet series and reverse the orders of summation and integration.  This cleanly separates the variables $s$ and $u$.  Thus we get
\begin{equation}
\label{eq:K12'}
K_{12}' = \sum_{n \leq y_2} \frac{d(n)}{n} K_1 K_2
,
\end{equation}
where
\begin{align}
K_1 &= \frac{1}{2\pi i} \int_{(\varepsilon)}  \leg{y_1}{n}^s  \frac{1}{\zeta(1 + \alpha + s)\zeta(1 - \beta + s)} \frac{ds }{s^{i+1}} 
\\
K_2 &= \frac{1}{2\pi i} \int_{(\varepsilon)} \leg{y_2}{n}^u \frac{\zeta(1 + \alpha + u) \zeta(1 - \beta + u)}{\zeta^2(1 + 2u) } \frac{du}{u^{j+1}}.
\end{align}
Here we were able to truncate $n$ at $y_2 < y_1$ by moving the $u$-integral far to the right.  

We compute the $s$ and $u$ integrals separately with the following
\begin{mylemma}
\label{lemma:K1K2}
Suppose $i \geq 1$, $j \geq 3$.  Then we have
\begin{equation}
\label{eq:K1calc}
K_1 = %(-\alpha \beta) \frac{(\log y_1/n)^{i}}{i!} + \frac{(\log y_1/n)^{i-2}}{(i-2)!} + O(L^{i-3})
\frac{1}{i!} \frac{d^2}{dx dy} \left[e^{\alpha x - \beta y} (x + y + \log{y_1/n})^i \right]_{x=y=0} + O(L^{i-3}),
\end{equation}
and
% \begin{equation}
% \label{eq:K2alt}
%  K_2 = \frac{4 (\log y_2/n)^{j}}{(j-1)!} \frac{d}{dw} \Big[ w \mathop{\int \int}_{\substack{a+b \leq 1\\ 0\leq a,b}} \frac{(1-a-b)^{j-1}}{a+b} \leg{y_2}{n}^{-a\alpha w  + b\beta w }   da db \big]_{w=1} + O(L^{j-1}).
% \end{equation}
\begin{equation}
 \label{eq:K2j2}
  K_2 = \frac{4 (\log (y_2/n))^{j}}{(j-2)!} \mathop{\int \int}_{\substack{a+b \leq 1\\ 0\leq a,b}} (1-a-b)^{j-2} \leg{y_2}{n}^{-a\alpha  + b\beta }   da db + O(L^{j-1}).
 \end{equation}
\end{mylemma}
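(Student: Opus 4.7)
The plan is to evaluate both $K_1$ and $K_2$ by recognizing them as residues at the origin, after extracting the polar structure of the zeta factors using the fact that $E(z):=z\zeta(1+z)$ is entire with $E(0)=1$: one factors
\[
\frac{1}{\zeta(1+\alpha+s)\zeta(1-\beta+s)} = (s+\alpha)(s-\beta)\,G(s),\quad \frac{\zeta(1+\alpha+u)\zeta(1-\beta+u)}{\zeta^2(1+2u)} = \frac{4u^2 R(u)}{(u+\alpha)(u-\beta)},
\]
where $G, R$ are holomorphic in a neighborhood of $0$ with $G(0), R(0) = 1 + O(L^{-1})$ and $G'(0), R'(0) = O(1)$.

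For $K_1$, I would decompose $G(s) = 1 + (G(s) - 1)$. The leading piece (with $G \equiv 1$) is a standard Mellin inversion for $n \leq y_1$: it equals the coefficient of $s^i$ in the Taylor expansion of $(y_1/n)^s (s+\alpha)(s-\beta)$. Expanding $(s+\alpha)(s-\beta) = s^2+(\alpha-\beta)s-\alpha\beta$ against $e^{s\log(y_1/n)}$ gives
\[
\frac{(\log y_1/n)^{i-2}}{(i-2)!}+(\alpha-\beta)\frac{(\log y_1/n)^{i-1}}{(i-1)!}-\alpha\beta\frac{(\log y_1/n)^i}{i!},
\]
which a second direct expansion identifies term-by-term with $\frac{1}{i!}\partial_x\partial_y[e^{\alpha x-\beta y}(x+y+\log(y_1/n))^i]_{x=y=0}$. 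Using $\alpha\beta\ll L^{-2}$, this leading contribution has size $L^{i-2}$; the correction from $G(s) - 1 = O(L^{-1}) + O(s)$ contributes $O(L^{i-3})$, since the $O(L^{-1})$ constant part multiplies the main term by a factor of $L^{-1}$, while the $O(s)$ part produces an analog of the main term with $i$ replaced by $i-1$.

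For $K_2$, after replacing $R$ by $1$ the crucial step is the Feynman-parameter identity
\[
\frac{1}{(u+\alpha)(u-\beta)\,u^{j-1}} = \frac{j!}{(j-2)!}\iint_{\substack{a+b\le 1\\ a,b\ge 0}} \frac{(1-a-b)^{j-2}\,da\,db}{(u+a\alpha-b\beta)^{j+1}},
\]
obtained from the classical Feynman parameterization with exponents $(1,1,j-1)$ together with the algebraic simplification $a(u+\alpha)+b(u-\beta)+(1-a-b)u=u+a\alpha-b\beta$. Inserting this and exchanging the order of integration, the inner integral becomes $\frac{1}{2\pi i}\int_{(\varepsilon)}(y_2/n)^u\,du/(u-(b\beta-a\alpha))^{j+1}$; since $y_2/n \geq 1$ for $n \leq y_2$, we may close the contour to the left, and Cauchy's formula for the $j$-th Taylor coefficient evaluates it to $(y_2/n)^{b\beta-a\alpha}[\log(y_2/n)]^j/j!$. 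Substituting back produces the claimed main term verbatim. The main technical obstacle is controlling the $O(L^{j-1})$ error from $R(u)-1$: the constant part $R(0)-1 = O(L^{-1})$ multiplies the $L^j$-sized main term to give $O(L^{j-1})$; the linear part $R'(0)\,u$ effectively reduces the $u^{j-1}$ pole order by one and yields (via the same Feynman argument with exponents $(1,1,j-2)$) a contribution of size $L^{j-1}$; higher-order terms are $O(L^{j-2})$.
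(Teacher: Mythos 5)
Your $K_1$ computation coincides with the paper's: both factor out the polar part $(s+\alpha)(s-\beta)$ and read off the residue at $s=0$, repackaging the three explicit terms into the $\frac{d^2}{dx\,dy}$ expression. For $K_2$, your route is genuinely different and cleaner. The paper instead applies the identity $\frac{1}{\alpha+u}=\int_{1/q}^1 r^{\alpha+u-1}\,dr+\frac{q^{-\alpha-u}}{\alpha+u}$ (with $q=y_2/n$) twice, peeling off the poles at $u=-\alpha$ and $u=\beta$ one at a time, checking each time that the leftover term vanishes upon enlarging the contour, and then substituting $r=q^{-a}$, $t=q^{-b}$. Your Feynman parameterization with exponents $(1,1,j-1)$ merges all three simple/high-order poles into a single $(u+a\alpha-b\beta)^{j+1}$ in one step, after which the residue is immediate. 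Both yield identical main terms; yours is arguably more symmetric and avoids the two-stage peeling argument.

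There is, however, a gap in the error control that both halves of your argument share: you Taylor-expand $G$ and $R$ near the origin, but the contour is the full vertical line $(\varepsilon)$, on which these functions are not given by their Taylor series (indeed $R(u)$ inherits poles from the nontrivial zeros of $\zeta(1+2u)$). The paper first shifts the contour to a small circle of radius $\asymp L^{-1}$ about the origin, justified by the zero-free region and the bound $1/\zeta(\sigma+it)\ll\log(2+|t|)$ there (this is the ``argument on the level of the prime number theorem''), and only then Taylor-expands. Without that truncation your expansion of $G-1$ and $R-1$ is not licensed. Relatedly, for $i=1$ your split of the $K_1$ integrand into a $G\equiv1$ piece and a $G-1$ piece produces two integrals each of size $O(1)$ as $|s|\to\infty$ on the vertical line, so neither converges individually even though the original does. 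Inserting the contour-truncation step before your decomposition repairs the argument and gives the stated $O(L^{i-3})$, $O(L^{j-1})$ error terms.
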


\begin{proof}[Proof of Lemma \ref{lemma:K1K2}]
We first work on $K_1$.  An argument on the level of the prime number theorem shows that $K_1$ is captured by the residue at $s=0$, with an error of size $(\log y_1/n)^{-A}$ for arbitrarily large $A$.  Since $n \leq y_2$, we have $\log y_1/n \geq \log y_1/y_2 = (\theta_1 - \theta_2) L$ so that this error is satisfactory (if we had $y_1 = y_2$ the error term would be more delicate; see Lemma \ref{lemma:L1computed}).  A similar approximation argument shows that
\begin{equation*}
K_1 = \frac{1}{2 \pi i} \oint \leg{y_1}{n}^s  (\alpha + s)(-\beta + s)\frac{ds }{s^{i+1}} +O(L^{i-3}),
\end{equation*}
where the contour is a small circle enclosing $0$.%, \alpha, \beta$.  
We calculate this integral exactly as
\begin{equation*}
K_1 = (-\alpha \beta) \frac{(\log (y_1/n))^{i}}{i!} + (\alpha-\beta) \frac{(\log (y_1/n))^{i-1}}{(i-1)!} + \frac{(\log (y_1/n))^{i-2}}{(i-2)!} + O(L^{i-3}).
\end{equation*}
When $i=1$ we interpret $1/(i-2)! = 0$.
This can be expressed in a compact form as \eqref{eq:K1calc}.

Now we compute $K_2$.  As in the computation of $K_1$, the prime number theorem shows that we can replace the contour by a small circle around the origin with radius $\asymp L^{-1}$, with error $O(1)$.  Then on this contour 
\begin{equation}
\label{eq:K2circle}
K_2 = 4 \frac{1}{2\pi i} \oint q^u \frac{1}{(\alpha+u)(-\beta+u)}  \frac{du}{u^{j-1}} + O(L^{j-1}), %+ O(1),
\end{equation}
where $q= y_2/n$.  
Observe the following identity
\begin{equation}
\label{eq:reciprocal}
 \frac{1}{\alpha +u} = \int_{1/q}^{1} r^{\alpha+u-1} dr + \frac{q^{-\alpha-u}}{\alpha+u},
\end{equation}
valid for all complex numbers $\alpha + u$ and positive $q$.  We apply this identity to $K_2$, expressing the main term as the sum of these two terms.  The latter term can be seen to give no contribution to $K_2$: it is
\begin{equation*}
 4 q^{-\alpha} \frac{1}{2\pi i} \oint \frac{1}{(\alpha + u)(-\beta+u)}  \frac{du}{u^{j-1}},
\end{equation*}
which vanishes as can be seen by taking the contour to be arbitrarily large.  By reversing the orders of integration we have
\begin{equation*}
 K_2 = 4 \int_{1/q}^{1} r^{\alpha-1} \frac{1}{2 \pi i} \oint (qr)^{u} \frac{1}{-\beta + u} \frac{du}{u^{j-1}} dr + O(L^{j-1}).
\end{equation*}
We use \eqref{eq:reciprocal} again but with the lower boundary of integration at $1/qr$.  Again we get $K_2$ as the sum of two terms, with the latter term vanishing (using $j \geq 2$).  
The former term is
\begin{equation*}
 4 \int_{1/q}^{1} \int_{1/qr}^{1} r^{\alpha-1} t^{-\beta-1} \frac{1}{2\pi i} \oint (qrt)^{u} \frac{du}{u^{j-1}} dt dr,
\end{equation*}
which %vanishes for $j=1$, as desired.  For $j \geq 2$ it is
can be calculated as
\begin{equation*}
 \frac{4}{(j-2)!} \int_{1/q}^{1} \int_{1/qr}^{1} r^{\alpha-1} t^{-\beta-1} (\log rt \frac{y_2}{n})^{j-2} dt dr.
\end{equation*}
Changing variables by $r = q^{-a}, t = q^{-b}$ and simplifying gives \eqref{eq:K2j2}.
\end{proof}

\subsection{Final simplifications}
We are finally ready to finish the proof of Lemma \ref{lemma:mainterm12}.  
\begin{proof}
We pick up our calculation with \eqref{eq:I12} and \eqref{eq:K12'},
getting
\begin{equation*}
 I_{12}(\alpha,\beta) = \widehat{w_0}(0) \sum_{n \leq y_2} \frac{d(n)}{n} \Big(\sum_{i} \frac{a_i i!}{(\log y_1)^i} K_1 \Big) \Big(\sum_{j} \frac{b_j j!}{(\log y_2)^j} K_2 \Big) + O(T/L).
\end{equation*}
Using Lemma \ref{lemma:K1K2} we now compute these sums over $i$ and $j$.  We have
\begin{equation*}
 \sum_i = \frac{d^2}{dx dy}\Big[ e^{\alpha x - \beta y} P_1\Big(\frac{x+y}{\log y_1} + \frac{\log y_1/n}{\log y_1} \Big) \Big]_{x=y=0} 
% P_1\leg{\log {y_1/n}}{\log{y_2}} + \frac{P_1''\leg{\log{y_1/n}}{\log{y_2}}}{(\log y_1)^2} 
+ O(L^{-3}),
\end{equation*}
which we write in the slightly more convenient form as
\begin{equation*}
\sum_i= \frac{1}{(\log y_1)^2} \frac{d^2}{dx dy}\Big[ y_1^{\alpha x - \beta y} P_1\Big(x+y + \frac{\log y_1/n}{\log y_1} \Big) \Big]_{x=y=0} + O(L^{-3}).
\end{equation*}
The sum over $j$ is
\begin{equation*}
 \sum_j = 4 \frac{(\log y_2/n)^{2}}{(\log y_2)^{2}} \mathop{\int \int}_{0 \leq a+ b \leq 1}  \leg{y_2}{n}^{-a\alpha + b\beta} P_2''\Big((1-a-b) \frac{\log (y_2/n)}{\log y_2} \Big) da db
%\sum_j \frac{b_j j!}{(j-2)!} \leg{b \log(y_2/n)}{\log y_2}^{j-2}  
+ O(L^{-1}).
\end{equation*}
With this, and recalling $\widehat{w_0}(0) = T^{-\beta} \widehat{w}(0)(1+O(L^{-1}))$, we have
\begin{multline}
 I_{12}(\alpha,\beta) =  \frac{4 T^{-\beta} \widehat{w}(0)}{(\log y_1)^2} \frac{d^2}{dx dy}\Big[ y_1^{\alpha x - \beta y} \mathop{\int \int}_{0 \leq a+b \leq 1} 
\sum_{n \leq y_2} \frac{d(n)}{n}  \frac{(\log (y_2/n))^{2}}{(\log y_2)^{2}}
\\
\leg{y_2}{n}^{-a\alpha + b\beta} P_1\Big(x+y + \frac{\log (y_1/n)}{\log y_1} \Big) P_2''\Big((1-a-b) \frac{\log (y_2/n)}{\log y_2} \Big) \Big]_{x=y=0} da db   + O(T/L).  %O\leg{T}{L}.
\end{multline}
Finally we apply Lemma \ref{lemma:EulerMacCross} to the sum over $n$ (with $k=2$, $x=y_1$, $z=y_2$, $s=-a\alpha +  b\beta$, $F(u) = P_1(x+y+u)$, $H(u) = u^2 P_2''((1-a-b)u)$) to finish the proof of Lemma \ref{lemma:mainterm12}.
\end{proof}

\subsection{The arithmetical factor}
\label{section:crossarithmeticalfactor}
Here we verify that $A(0,0,0) = 1$.  To do so, we compute $A(s,s,s)$ and verify it is $1$ at $s=0$.  In view of \eqref{eq:arithmeticalcross}, we have
\begin{equation*}
 A(s,s,s) = \sum_{hl = nk}  \frac{\mu(n)\mu_2(h) \sigma_{\alpha, -\beta}(l) }{(hknl)^{\half + s} } = \sum_{hl = nk}  \frac{\mu(n)\mu_2(h) \sigma_{\alpha, -\beta}(l) }{(hl)^{1 + 2s} }.
\end{equation*}
For $hl$ fixed, the sum over $nk$ is $\sum_{n | hl} \mu(n)$ which picks out $h=l=1$.  Thus $A(s,s,s) = 1$ for all $s$!

\section{The second diagonal term}
\label{section:diag}
Our goal in this section is to prove Theorem \ref{thm:mainterm2}.  The overall strategy is roughly the same as that in Section \ref{section:cross}.
\subsection{Reduction to a contour integral}
Recall $I_2(\alpha, \beta)$ is defined by \eqref{eq:I2def}.  Writing out the definition of $\psi_2$, we have
\begin{equation}
 I_{2}(\alpha,\beta) = 
\sum_{\substack{h_1, h_2 \\ k_1, k_2}} 
\frac{\mu_2(h_1) \mu_2(h_2) P_2[h_1 k_1] P_2[h_2 k_2]}{\sqrt{h_1 h_2 k_1 k_2}} \intR \left(\tfrac{h_1 k_2}{h_2 k_1}\right)^{-it} w(t) \zeta(\thalf + \alpha + it) \zeta(\thalf + \beta - it) dt.
\end{equation}
We apply Proposition \ref{prop:twisted}, writing $I_2(\alpha, \beta) = I_2'(\alpha, \beta) + I_2''(\alpha, \beta)$, where $I_2''$ can be obtained from $I_2'$ by switching $\alpha$ and $-\beta$ and multiplying by $\leg{t}{2\pi}^{-\alpha-\beta} = T^{-\alpha - \beta} + O(L^{-1})$.  Thus
\begin{multline}
 I_{2}'(\alpha,\beta) = \intR w(t) \sum_{i,j} \frac{b_i b_j i! j!}{(\log{y_2})^{i+j}} \sum_{h_1 k_2 m = h_2 k_1 n} \frac{\mu_2(h_1) \mu_2(h_2) }{(h_1 h_2 k_1 k_2)^{\half} m^{\half + \alpha} n^{\half + \beta}} 
\\
\leg{1}{2 \pi i}^3  \int_{(1)} \int_{(1)}\int_{(1)}\leg{y_2}{h_1 k_1}^s \leg{y_2}{h_2 k_2}^u \leg{t}{2 \pi mn}^{z} \frac{G(z)}{z} dz \frac{ds}{s^{i+1}} \frac{du}{u^{j+1}} dt.
\end{multline}
Next we compute the arithmetical sum as
\begin{multline}
\label{eq:B}
 \sum_{h_1 k_2 m = h_2 k_1 n} \frac{\mu_2(h_1) \mu_2(h_2) }{(h_1 k_1 )^{\half+s} (h_2 k_2)^{\half+u} m^{\half + \alpha+z} n^{\half + \beta+z}} 
\\
= \frac{\zeta(1+s+u)^5 \zeta(1+ \alpha + s + z) \zeta(1 + \beta + u + z) \zeta(1 + \alpha + \beta + 2z)}{\zeta^2(1+2s) \zeta^2(1+2u) \zeta^2(1+\beta + s + z)\zeta^2(1+\alpha + u + z)} B(s,u,z),
\end{multline}
where $B(s,u,z)$ is an arithmetical factor converging absolutely in a product of half-planes containing the origin.  Hence
\begin{multline*}
 I_2'(\alpha,\beta) = \intR w(t) \sum_{i,j} \frac{b_i b_j i! j!}{(\log{y_2})^{i+j}} 
\leg{1}{2 \pi i}^3  \int_{(1)} \int_{(1)}\int_{(1)} B(s,u,z)y_2^{s+u} \leg{t}{2 \pi}^{z} \frac{G(z)}{z}
\\
\frac{\zeta(1+s+u)^5 \zeta(1+ \alpha + s + z) \zeta(1 + \beta + u + z) \zeta(1 + \alpha + \beta + 2z)}{\zeta^2(1+2s) \zeta^2(1+2u) \zeta^2(1+\beta + s + z)\zeta^2(1+\alpha + u + z)}  dz \frac{ds}{s^{i+1}} \frac{du}{u^{j+1}} dt.
\end{multline*}
Now we take the contours of integration to $\delta > 0$ small, and then move $z$ to $-\delta + \varepsilon$, crossing a simple pole at $z=0$ only (since $G(z)$ vanishes at the pole of $\zeta(1 + \alpha + \beta + 2z)$).  The new line of integration contributes
\begin{equation}
 \ll T^{1+\varepsilon} \leg{y_2^2}{T}^{ \delta} = O(T^{1-\varepsilon}),
\end{equation}
since $\theta_2 < 1/2$.  Write $I_2'(\alpha, \beta) = I_{20}'(\alpha + \beta) + O(T^{1-\varepsilon})$, where $I_{20}'(\alpha, \beta)$ corresponds to the residue at $z=0$. Then
\begin{equation}
\label{eq:I2'}
I_{20}'(\alpha, \beta)= \widehat{w}(0) \zeta(1 + \alpha + \beta) \sum_{i,j} \frac{b_i b_j i! j!}{(\log{y_2})^{i+j}} J_2,
\end{equation}
where
\begin{equation*}
 J_2 = \leg{1}{2 \pi i}^2  \int_{(\delta)}\int_{(\delta)} y_2^{s+u} \frac{B(s,u,0) \zeta(1+s+u)^5 \zeta(1+ \alpha + s) \zeta(1 + \beta + u)}{\zeta^2(1+2s) \zeta^2(1+2u) \zeta^2(1+\beta + s)\zeta^2(1+\alpha + u)}  \frac{ds}{s^{i+1}} \frac{du}{u^{j+1}}.
\end{equation*}
Using the Dirichlet series for $\zeta^5(1+s+u)$ and reversing the order of the sum and integral, we get
\begin{multline*}
 J_{2} = \sum_{m \leq y_2} \frac{d_5(m)}{m} \leg{1}{2 \pi i}^2  \int_{(\delta)}\int_{(\delta)} B(s,u,0) \leg{y_2}{m}^{s+u} 
\\
\frac{\zeta(1+ \alpha + s) \zeta(1 + \beta + u)}{\zeta^2(1+2s) \zeta^2(1+2u) \zeta^2(1+\beta + s)\zeta^2(1+\alpha + u)}  \frac{ds}{s^{i+1}} \frac{du}{u^{j+1}}.
\end{multline*}
Taking $\delta \asymp L^{-1}$ and bounding the integrals trivially shows $J_2 \ll L^{i+j-1}$.  In particular, we can use a Taylor series so that $B(s,u,0) = B(0,0,0) + O(|s| + |u|)$ to write $J_2 = J_2' + O(L^{i+j-2})$, say.
Now the variables are separated so that
\begin{equation}
\label{eq:J2'}
J_2' = \sum_{m \leq y_2} \frac{d_5(m)}{m} L_1 L_2,
\end{equation}
where
\begin{equation}
\label{eq:L1}
L_1 = \frac{1}{2 \pi i} \int_{(\delta)} \leg{y_2}{m}^{s} \frac{\zeta(1+ \alpha + s) }{\zeta^2(1+2s) \zeta^2(1+\beta + s)}  \frac{ds}{s^{i+1}}, %\sim 4 \frac{1}{2 \pi i} \int_{(\delta)} \leg{y_2}{m}^{s} \frac{(\beta+s)^2 }{(\alpha+s)}  \frac{ds}{s^{i-1}}
\end{equation}
and $L_2$ is the same as $L_1$ but with $i$ replaced by $j$ and $\alpha$ and $\beta$ switched.
Next we need to use the zero-free region for $\zeta$ to move the contour to the left of $0$.  Unfortunately, an error of size $O(1)$ is not sufficient for our application so we need a more subtle argument.  We have
\begin{mylemma}
\label{lemma:L1computed}
With $L_1$ defined by \eqref{eq:L1} and for some $\nu \asymp (\log \log y_2)^{-1}$ we have
\begin{equation}
\label{eq:L1circleint}
L_1 = 4 \frac{1}{2 \pi i} \oint\leg{y_2}{m}^s \frac{(\beta + s)^2}{\alpha + s} \frac{ds}{s^{i-1}} + O(L^{i-4}) + O\Big(\big(\frac{y_2}{m}\big)^{-\nu} L^{\varepsilon}\Big),
\end{equation}
where the contour is a circle of radius one enclosing the origin.
\end{mylemma}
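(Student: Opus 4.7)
The plan is to shift the contour of $L_1$ leftwards across the two poles of its integrand near the origin, bound the tail using the zero-free region of $\zeta$, and then match the resulting residue sum to the contour integral of the simpler meromorphic function
\begin{equation*}
G(s) := 4\left(\frac{y_2}{m}\right)^{s} \frac{(\beta+s)^2}{(\alpha+s)\,s^{i-1}}.
\end{equation*}
Denote by $F(s)$ the integrand of $L_1$ in \eqref{eq:L1}.

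First I would shift the contour from $\mathrm{Re}(s)=\delta$ to $\mathrm{Re}(s)=-\nu$, with $\nu \asymp (\log\log y_2)^{-1}$ chosen inside a classical zero-free region so that $\zeta(1+s)$ and $1/\zeta(1+s)$ are both $O(L^{\varepsilon})$ on and to the right of this line. The only singularities crossed are a pole of order $i-1$ at $s=0$ (the $(i+1)$-fold pole from $s^{-i-1}$ is trimmed by the double zero of $\zeta^{-2}(1+2s)$) and a simple pole at $s=-\alpha$ from $\zeta(1+\alpha+s)$; the other $\zeta^{-2}$ factors contribute only zeros. On the shifted line the integrand is $O((y_2/m)^{-\nu}\, L^{\varepsilon}|s|^{-i-1})$, which integrates to the advertised error $O((y_2/m)^{-\nu} L^{\varepsilon})$.

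Next I would observe the algebraic identity $F(s)=G(s)\,A(s)B(s)C(s)$ with the three entire functions
\begin{equation*}
A(s)=(\alpha+s)\zeta(1+\alpha+s), \quad B(s)=\frac{1}{(\beta+s)^2\zeta^2(1+\beta+s)}, \quad C(s)=\frac{1}{(2s)^2\zeta^2(1+2s)},
\end{equation*}
satisfying $A(-\alpha)=B(-\beta)=C(0)=1$. Since $G$ is meromorphic on $\mathbb{C}$ with poles only at $s=0$ and $s=-\alpha$, and both points lie well inside $|s|=1$ (as $|\alpha|\ll L^{-1}$), the residue theorem identifies $\mathrm{Res}_{s=0}G+\mathrm{Res}_{s=-\alpha}G$ with $(2\pi i)^{-1}\oint_{|s|=1}G(s)\,ds$, the main term of the lemma. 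It therefore suffices to prove
\begin{equation*}
\mathrm{Res}_{s=0}(F-G)+\mathrm{Res}_{s=-\alpha}(F-G)=O(L^{i-4}).
\end{equation*}

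For this, the Laurent expansion $\zeta(1+w)=w^{-1}+\gamma+O(w)$ yields $A,B,C=1+O(L^{-1})$ at each of the two poles, so $ABC-1=O(L^{-1})$. A direct computation shows that each residue of $G$ itself is of size $O(L^{i-3})$: at $s=0$ the leading value $4\beta^{2}/\alpha=O(L^{-1})$ multiplies $(i-2)$ logarithms from $(y_2/m)^s$; at $s=-\alpha$ the prefactor $4(\beta-\alpha)^{2}=O(L^{-2})$ tames the apparent $(-\alpha)^{-(i-1)}=O(L^{i-1})$ blow-up. Multiplying gives the required $O(L^{-1}\cdot L^{i-3})=O(L^{i-4})$ bound. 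The main obstacle will be the detailed Laurent-series bookkeeping in this last step: one must verify that every Taylor correction in $A$, $B$, and $C$ beyond the leading constant still produces residue contributions no larger than $L^{i-4}$, and in particular that the potentially dangerous $(-\alpha)^{-(i-1)}$ blow-up at the simple pole $s=-\alpha$ remains fully neutralized at every order by the quadratic prefactor $(\beta+s)^2$ and the smallness $\alpha+\beta=O(L^{-1})$.
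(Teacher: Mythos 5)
Your overall strategy---shift into the zero-free region to pick up the residues at $s=0$ and $s=-\alpha$, then replace the integrand by the simpler meromorphic function $G(s)$ via Taylor expansion of the $\zeta$-factors---is essentially the proof in the paper, and the factorization $F = G\cdot A B C$ is a tidy way to make precise the Taylor-expansion step that the paper handles more tersely. However, the contour shift as you describe it has a genuine gap. You move the \emph{entire} vertical line from $\text{Re}(s)=\delta$ to $\text{Re}(s)=-\nu$ with $\nu\asymp(\log\log y_2)^{-1}$ and claim that $1/\zeta(1+2s)$ and $1/\zeta(1+\beta+s)$ are $O(L^{\varepsilon})$ there. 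But the classical zero-free region only guarantees $\zeta(\sigma+it)\neq 0$ for $\sigma>1-c/\log(2+|t|)$, and on the line $\text{Re}(s)=-\nu$ one has $1-2\nu<1-c/\log|t|$ as soon as $|t|\gtrsim\exp(c/(2\nu))\asymp\log y_2$. Beyond that height your shifted line can run through zeros of $\zeta(1+2s)$ or $\zeta(1+\beta+s)$, so the bound you need on the new line is unjustified. The paper's proof avoids this by bending the contour: it keeps the line near $\text{Re}(s)=0$ for $|t|\geq Y$ (where $1/\zeta(1+it)\ll\log|t|$ unconditionally and the $s^{-(i+1)}$ decay makes the tail $O(Y^{-2})$), pushes to $\text{Re}(s)=-c/\log Y$ only for $|t|\leq Y$, and chooses $Y\asymp\log y_2$, which gives $\nu=c/\log Y\asymp 1/\log\log y_2$ and an extra error $O(L^{-2})$ that is absorbed into $O(L^{i-4})$ since $i\geq 3$.

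Once the contour is repaired, the rest of your argument is sound. One cautionary remark: your phrase ``each residue of $G$ itself is of size $O(L^{i-3})$'' is not literally true uniformly for $\alpha,\beta\ll L^{-1}$ (if $\alpha$ is very small, the two residues can individually be large while nearly cancelling). What actually holds, and what you implicitly use, is that the \emph{sum} of the residues equals $\frac{1}{2\pi i}\oint G(s)\,ds$ over a circle of radius $\asymp L^{-1}$, and on that circle $|G(s)|\ll L^{i-2}$, giving $O(L^{i-3})$ for the sum; likewise the difference $\text{Res}_{s=0}(F-G)+\text{Res}_{s=-\alpha}(F-G)$ should be bounded as $\frac{1}{2\pi i}\oint G(s)\bigl(A(s)B(s)C(s)-1\bigr)\,ds$ with $ABC-1=O(L^{-1})$ on that circle. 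This is exactly how the paper phrases it, so you are on the same track; just keep the poles bundled under one small contour rather than estimating them separately.
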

\begin{proof}

Let $Y = o(T)$ be a large parameter to be chosen later.  By Cauchy's theorem, $L_1$ is equal to the sum of residues at $s=0$ and $s=-\alpha$, %\eqref{eq:L1circleint} 
plus integrals over the line segments $\gamma_1 = \{ s = it, t \in \mr, |t| \geq Y \}$, $\gamma_{2} = \{s = \sigma \pm iY, -\frac{c}{\log{Y}} \leq \sigma \leq 0\}$, and $\gamma_3 = \{s=-\frac{c}{\log{Y}} + it, |t| \leq Y\}$, where $c$ is some fixed positive constant such that $\zeta(1+2s) \zeta(1 + \beta + s)$ has no zeros in the region on the right hand side of the contour determined by the $\gamma_i$.  Furthermore, we require that for such $c$ that $1/\zeta(\sigma + it) \ll \log(2 + |t|)$ in this region (see \cite{T} Theorem 3.11).  Then the integral over $\gamma_1$ is $\ll (\log{Y})^3/Y^{i} \ll Y^{-2}$ since $i \geq 3$.  The integral over $\gamma_2$ is $\ll (\log{Y})^2/Y^{i+1} \ll Y^{-2}$.  Finally, the contribution from $\gamma_3$ is $\ll (\log{Y})^i \leg{y_2}{m}^{-c/\log{Y}}$.  Choosing $Y \asymp \log y_2$ gives an error so far of size $O((y_2/m)^{-\nu} L^{\varepsilon}) + O(L^{-2})$.

Next we work with the sum of residues which can be expressed as
\begin{equation*}
\frac{1}{2 \pi i} \oint \leg{y_2}{m}^s \frac{\zeta(1+\alpha+s)}{\zeta^2(1+2s) \zeta^2(1+\beta + s)}  \frac{ds}{s^{i+1}},
\end{equation*}
where the contour is a circle of radius $\asymp 1/L$.
This integral is trivially bounded by $O(L^{i-3})$ so that taking the first term in the Taylor series of the $\zeta$'s finishes the proof.
\end{proof}
Next we calculate the integral in \eqref{eq:L1circleint} with the following
\begin{mylemma}
\label{lemma:L1exact}
For $i \geq 3$ we have
\begin{equation*}
\frac{1}{2 \pi i} \oint\leg{y_2}{m}^s \frac{(\beta + s)^2}{\alpha + s} \frac{ds}{s^{i-1}} = 
 \frac{d^2}{dx^2} \frac{(x+\log\frac{y_2}{m})^{i-1}}{(i-2)!} \int_0^{1} (1-u)^{i-2} e^{x(\beta - \alpha u)} \leg{y_2}{m}^{-\alpha u} du \Big|_{x=0}.
\end{equation*}
\end{mylemma}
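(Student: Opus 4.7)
\medskip

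\noindent\textbf{Proof plan for Lemma \ref{lemma:L1exact}.}

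The strategy is to introduce the one-parameter family of ``building block'' contour integrals
\begin{equation*}
A_k(y) := \frac{1}{2\pi i}\oint_{|s|=1} \frac{e^{sy}}{(\alpha+s) s^k}\, ds, \qquad k \geq 1,
\end{equation*}
and express both sides of the asserted identity in terms of this family with $y = \log(y_2/m)$. The clean version of the proof is to first establish the closed form
\begin{equation}\label{eq:Akclosed}
A_k(y) = \frac{y^k}{(k-1)!} \int_0^1 (1-u)^{k-1} e^{-\alpha u y}\, du.
\end{equation}
This follows from the elementary identity
\begin{equation*}
\frac{1}{\alpha+s} = \frac{e^{-(\alpha+s)y}}{\alpha+s} + y \int_0^1 e^{-u(\alpha+s) y}\, du,
\end{equation*}
valid for all $\alpha+s \neq 0$. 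Multiplying by $e^{sy}$ and dividing by $s^k$, the first piece contributes $e^{-\alpha y}\cdot \frac{1}{2\pi i}\oint \frac{ds}{(\alpha+s)s^k}$. Since this integrand has decay $|s|^{-k-1}$ at infinity for $k\geq 1$, and both of its finite poles (at $s=0$ and $s=-\alpha$) lie inside the unit circle, the sum of all residues is zero and this piece vanishes. The remaining piece, after swapping the order of integration, evaluates via the standard residue
\begin{equation*}
\frac{1}{2\pi i}\oint \frac{e^{s(1-u)y}}{s^k}\, ds = \frac{((1-u)y)^{k-1}}{(k-1)!},
\end{equation*}
which yields \eqref{eq:Akclosed}.

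With \eqref{eq:Akclosed} in hand, I would then read off both sides of the lemma. For the right-hand side, letting $z = x+y$, we have $z^{i-1}\int_0^1(1-u)^{i-2}e^{-\alpha uz}du = (i-2)!\,A_{i-1}(z)$, so
\begin{equation*}
\text{RHS} = \frac{d^2}{dx^2}\Bigl[e^{x\beta}\, A_{i-1}(x+y)\Bigr]_{x=0}.
\end{equation*}
Substituting the contour representation, the integrand depends on $x$ only through the factor $e^{x(\beta+s)}$; since the contour is compact we may differentiate under the integral sign and use $\frac{d^2}{dx^2}e^{x(\beta+s)}\bigl|_{x=0} = (\beta+s)^2$, producing exactly
\begin{equation*}
\frac{1}{2\pi i}\oint_{|s|=1} \frac{e^{sy}(\beta+s)^2}{(\alpha+s)s^{i-1}}\, ds = \text{LHS}.
\end{equation*}
(Equivalently, after expanding $(\beta+s)^2 = \beta^2 + 2\beta s + s^2$ one recognizes $\text{LHS} = \beta^2 A_{i-1}(y) + 2\beta A_{i-2}(y) + A_{i-3}(y)$ and checks this matches the expansion of the RHS; the two viewpoints are interchangeable.)

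The main obstacle is verifying \eqref{eq:Akclosed}: one must justify that the ``$q^{-\alpha-s}$'' piece contributes nothing, which relies on the decay at infinity and hence on $k \geq 1$; for the $A_{i-3}$ term this forces the hypothesis $i \geq 3$. Apart from this, the argument is a straightforward chain of residue and differentiation-under-the-integral computations, with the boundary convention $1/(i-4)!=0$ at $i=3$ handled automatically by the vanishing of the $s^2$ contribution in the expansion.
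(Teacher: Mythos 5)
Your proof is correct, but it travels by a genuinely different route than the one in the paper, so a comparison is worthwhile.

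The paper takes the $\frac{d^2}{dx^2}$ outside first (as you do), then attacks $N_1(x)=\frac{1}{2\pi i}\oint (e^{x}y_2/m)^s\frac{ds}{(\alpha+s)s^{i-1}}$ by expanding $(e^x y_2/m)^s$ as a power series in $s$, computing the residue at infinity via the substitution $s\mapsto 1/s$, expanding $(1+\alpha s)^{-1}$ as a second power series, and finally re-packaging the resulting double sum into the integral form by means of the beta-function identity $\frac{1}{(k+i-1)!}=B(i-1,k+1)\frac{1}{k!(i-2)!}$. Your argument replaces all of that with a single ``reciprocal splitting'' identity,
\begin{equation*}
\frac{1}{\alpha+s}=\frac{e^{-(\alpha+s)y}}{\alpha+s}+y\int_0^1 e^{-u(\alpha+s)y}\,du,
\end{equation*}
and then one residue-at-infinity observation (the first piece vanishes since both finite poles lie inside $|s|=1$ and the integrand decays like $|s|^{-k-1}$, which needs $k\geq 1$). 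This gives the closed form for $A_k$ directly, with no power-series bookkeeping and no beta function. Worth noting: your identity is precisely the exponentiated form of \eqref{eq:reciprocal}, which the paper already uses in the evaluation of $K_2$ in Lemma \ref{lemma:K1K2}; so your route has the additional virtue of making Lemmas \ref{lemma:K1K2} and \ref{lemma:L1exact} visibly instances of the same trick. Your main derivation needs only $A_{i-1}$ with $i-1\geq 1$; the parenthetical ``$A_{i-3}$'' decomposition does require some care at $i=3$ (where $A_0=e^{-\alpha y}$ lies outside the scope of \eqref{eq:Akclosed}), but since you present it only as an equivalent sanity check and not as the proof, this does not create a gap.

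Two minor clarifications you may want to record. First, you should note explicitly that the pole $s=-\alpha$ lies \emph{inside} the unit circle because $\alpha\ll L^{-1}$ is small; this is where the hypothesis on the size of the shifts enters the residue-at-infinity step. Second, differentiation under the integral sign over the compact contour is automatic here since the $x$-dependence enters only through the entire function $e^{x(\beta+s)}$, as you say; it is good practice to say this in one line so the reader does not wonder whether the interchange is an issue.
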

\begin{proof}
Let $N$ be the integral to be computed.
We begin with the identity
\begin{equation*}
 (\beta+s)^2 =  \frac{d^2}{dx^2} e^{(\beta+s)x} \Big|_{x=0},
\end{equation*}
whence
\begin{equation*}
N= \frac{d^2}{dx^2} \left. e^{\beta x} N_1(x) \right|_{x=0}, \quad \text{where} \quad N_1(x) = \frac{1}{2 \pi i} \oint \left( e^{x} \frac{y_2}{m}\right)^{s} \frac{1}{\alpha + s} \frac{ds}{s^{i-1}}. %\right|_{x=0}.
\end{equation*}
Taking a power series, we have
\begin{equation*}
N_1(x) = \sum_{l \geq 0} \frac{(x + \log\frac{y_2}{m})^l}{l!} \frac{1}{2 \pi i} \oint \frac{s^{l-i+1}}{\alpha + s} ds.
\end{equation*}
As there are two poles inside the contour, it is easier to compute the residue at infinity.  In other words, change variables $s \rightarrow 1/s$ to get
\begin{equation*}
N_1(x) = \sum_{l \geq 0} \frac{(x + \log\frac{y_2}{m})^l}{l!} \frac{1}{2 \pi i} \oint \frac{s^{i-l-2}}{1+\alpha s} ds.
\end{equation*}
Taking a power series of $(1+\alpha s)^{-1}$, we get
\begin{equation*}
N_1(x) = \sum_{l \geq 0} \frac{(x + \log\frac{y_2}{m})^l}{l!} \sum_{k \geq 0} (-\alpha)^k \frac{1}{2 \pi i} \oint s^{k+i-l-2} ds.
\end{equation*}
The integral picks out $l = k+i-1$, giving
\begin{equation*}
N_1(x) = (x + \log\frac{y_2}{m})^{i-1} \sum_{k \geq 0} \frac{(-\alpha)^k (x + \log\frac{y_2}{m})^{k}}{(k+i-1)!}.
\end{equation*}
Now separate the variables $i$ and $k$ by the following trick (with $B(x,y)$ denoting the standard beta function)
\begin{equation*}
\frac{1}{(k+i-1)!} = B(i-1,k+1) \frac{1}{k! (i-2)!}%\frac{k! (i-2)!}{(k+i-1)!} \frac{1}{k! (i-2)!}
\end{equation*}
followed by the usual integral representation definition of the beta function, getting
\begin{equation*}
N_1(x) = \frac{(x + \log\frac{y_2}{m})^{i-1}}{(i-2)!} \int_0^{1} (1-u)^{i-2} \sum_{k \geq 0} \frac{(-\alpha)^k u^k (x + \log\frac{y_2}{m})^{k}}{k!}  du.
\end{equation*}
Of course the sum over $k$ is easily computable which completes the proof.
\end{proof}
Applying Lemmas \ref{lemma:L1computed} and \ref{lemma:L1exact} to \eqref{eq:J2'} gives
\begin{multline}
\label{eq:J2'divisor}
J_2' = \frac{16}{(i-2)!(j-2)!} \sum_{m \leq y_2} \frac{d_5(m)}{m} \frac{d^4}{dx^2 dy^2} \left[e^{\beta x+ \alpha y} (x+\log(y_2/m))^{i-1}(y+\log(y_2/m) )^{j-1} 
\right.
\\
\int_0^{1} \int_0^{1} (1-u)^{i-2} (1-v)^{j-2} e^{-\alpha x u - \beta y v} \leg{y_2}{m}^{-\alpha u - \beta v} du dv
 \Big]_{x=y=0}
\\
+ O(L^{i+j-7} \sum_{m \leq y_2} \frac{d_5(m)}{m}) 
+ O((L^{i-3} + L^{j-3}) L^{\varepsilon} \sum_{m \leq y_2} \frac{d_5(m)}{m} \leg{y_2}{m}^{-\nu}).
\end{multline}
Applying Lemma \ref{lemma:logsave} and noting $\max(i+1, j+1) \leq i+j-2$ since $i, j \geq 3$, shows that the error terms above are
\begin{equation*}
\ll L^{i+j-2} + L^{\varepsilon}(L^{i+1} + L^{j+1}) \ll L^{i+j-2+\varepsilon}.
\end{equation*}
Thus inserting \eqref{eq:J2'divisor} into \eqref{eq:I2'} and recalling $I_{2}'(\alpha, \beta) = I_{20}'(\alpha, \beta) + O(T^{1-\varepsilon})$ gives
\begin{multline*}
 I_2'(\alpha,\beta) = \frac{16 \widehat{w}(0)}{\alpha+\beta} \frac{d^4}{dx^2 dy^2} \Big[ \int_0^{1} \int_0^{1} e^{x(\beta -\alpha u) + y(\alpha -\beta v)} 
\sum_{m \leq y_2} \frac{d_5(m)}{m} 
\frac{(x + \log \frac{y_2}{m})(y + \log \frac{y_2}{m})}{(\log y_2)^4}
\\ 
P_2''\Big((1-u)\frac{x + \log \frac{y_2}{m}}{\log y_2}\Big) P_2''\Big((1-v)\frac{y + \log \frac{y_2}{m}}{\log y_2}\Big) \leg{y_2}{m}^{-\alpha u - \beta v} du dv
\Big]_{x=y=0} + O(TL^{-1+\varepsilon}).
\end{multline*}
We write this main term in a more convenient way as
\begin{multline*}
 %I_2'(\alpha,\beta) = 
\frac{16 \widehat{w}(0)}{(\alpha+\beta)(\log y_2)^6} \frac{d^4}{dx^2 dy^2} \Big[ \int_0^{1} \int_0^{1} y_2^{x(\beta -\alpha u) + y(\alpha -\beta v)} 
\sum_{m \leq y_2} \frac{d_5(m)}{m} \leg{y_2}{m}^{-\alpha u - \beta v}
\\ 
\Big(x+\frac{\log \frac{y_2}{m}}{\log y_2}\Big)\Big(y+\frac{\log \frac{y_2}{m}}{\log y_2}\Big)
P_2''\Big((1-u)(x+\frac{\log \frac{y_2}{m}}{\log y_2})\Big) P_2''\Big((1-v)(y+\frac{\log \frac{y_2}{m}}{\log y_2})\Big)  du dv
\Big]_{x=y=0}. %+ O\leg{T}{L^{1-\varepsilon}}.
\end{multline*}
Now apply Corollary \ref{coro:EulerMacdiag} to this sum over $m$ to get
\begin{multline*}
 I_2'(\alpha,\beta) = \frac{2 \widehat{w}(0)}{3(\alpha+\beta)(\log y_2)}  \frac{d^4}{dx^2 dy^2}
\Big[
\int_0^{1} \int_0^{1} \int_0^{1} (1-r)^{4} y_2^{\beta(x-v(y+r)) + \alpha(y-u(x+r))} 
\\
 (x+r)(y+r) P_2''\left((1-u)(x + r)\right) P_2''\left((1-v)(y + r)\right) dr du dv
\Big]_{x=y=0} + O(TL^{-1+\varepsilon}).
\end{multline*}
To form $I_2(\alpha,\beta)$, recall that we need to add $I_2'$ and $I_2''$, where $I_2''$ is formed by taking $I_2'$, switching $\alpha$ and $-\beta$, and multiplying by $T^{-\alpha-\beta}$.  Letting
\begin{equation}
 U(\alpha,\beta) = \frac{y_2^{\beta(x-v(y+r)) + \alpha(y-u(x+r))} - T^{-\alpha-\beta} y_2^{-\alpha(x-v(y+r)) - \beta(y-u(x+r))}}{\alpha + \beta},
\end{equation}
we then have
\begin{multline}
 I_2(\alpha,\beta) = \frac{2 \widehat{w}(0)}{3(\log y_2)}  \frac{d^4}{dx^2 dy^2}
\left[ \int_0^{1} \int_0^{1} \int_0^{1} (1-r)^{4} U(\alpha,\beta)
\right.
\\
\left.  (x+r)(y+r) P_2''\left((1-u)(x + r)\right) P_2''\left((1-v)(y + r)\right) dr du dv
\right]_{x=y=0} + O(TL^{-1+\varepsilon}).
\end{multline}
Now write
\begin{equation*}
U(\alpha,\beta) = y_2^{\beta(x-v(y+r)) + \alpha(y-u(x+r))} \frac{1  - (Ty_2^{x+y - v(y+r) - u(x+r)})^{-\alpha-\beta}}{\alpha + \beta}.
\end{equation*}
Using the integral formula
\begin{equation*}
 \frac{1-z^{-\alpha-\beta}}{\alpha + \beta} = \log{z} \int_0^{1} z^{-t(\alpha + \beta)} dt, 
\end{equation*}
% we get
% \begin{multline}
%  I_2(\alpha,\beta) = \frac{2}{3} \widehat{w}(0)  \frac{d^4}{dx^2 dy^2}
% \left[ \int_0^{1} \int_0^{1} \int_0^{1} \int_0^{1} (1-r)^{4} y_2^{\beta(x-v(y+r)) + \alpha(y-u(x+r))}
% \right.
% \\
% (\frac{1}{\theta_2} + (x+y-v(y+r)-u(x+r))) (Ty_2^{x+y - v(y+r) - u(x+r)})^{-t(\alpha+\beta)} (x+r)(y+r)
% \\
% \left.   P_2''\left((1-u)(x + r)\right) P_2''\left((1-v)(y + r)\right) dt dr du dv
% \right]_{x=y=0} + O(TL^{-1+\varepsilon}).
% \end{multline}
% Using \eqref{eq:Qop} 
and simplifying, we finish the proof of Lemma \ref{lemma:mainterm2}.

\subsection{The arithmetical factor}
Here we compute $B(0,0,0)$.  The first thing to notice is that since $B$ is holomorphic with respect to $\alpha, \beta$ near the origin, we have $B(0,0,0) = B_0(0,0,0) (1 + O(L^{-1}))$, where $B_0$ is $B$ specialized with $\alpha=\beta = 0$.  For this choice of $\alpha,\beta$ and with $u=z=s$, the expression \eqref{eq:B} simplifies greatly.  Thus
\begin{equation*}
 B_{0}(s,s,s) = \sum_{h_1 k_2 m = h_2 k_1 n} \frac{\mu_2(h_1) \mu_2(h_2) }{(h_1 k_1 h_2 k_2 mn)^{\half+s}}.
\end{equation*}
Making $h_1 k_2 = l_1$ and $h_2 k_1 = l_2$ be new variables and using $\sum_{h | l} \mu_2(h) = \mu(l)$, we get
\begin{equation*}
 B_{0}(s,s,s) = \sum_{l_1 m = l_2 n} \frac{\mu(l_1) \mu(l_2) }{(l_1 l_2 mn)^{\half+s}} = 1.
\end{equation*}

\end{document}